\documentclass[11pt]{article}
\usepackage[font=small,margin=10pt,labelfont={bf},labelsep={space}]{caption}
\RequirePackage[numbers,sort&compress]{natbib}
\RequirePackage[colorlinks,citecolor=blue,urlcolor=blue, linkcolor=blue]{hyperref}
\usepackage{lineno,hyperref}
\usepackage{amsmath}
\usepackage{amssymb}
\usepackage{amsfonts}
\usepackage{bm}
\usepackage{bbm}
\usepackage{mathabx}
\usepackage[utf8]{inputenc}
\usepackage{csquotes}
\usepackage{amsthm}
\usepackage{dsfont}
\usepackage{leftidx}
\usepackage{enumerate}
\usepackage[OT4]{fontenc}
\usepackage{graphicx}
\usepackage{enumitem}
\usepackage{multirow}
\usepackage{rotating}
\usepackage{float}
\usepackage{booktabs}
\usepackage[ruled]{algorithm2e}
\def\ud{\, \mathrm{d}}
\usepackage{fancyhdr}
\usepackage{authblk}

\newtheorem{thm}{Theorem}[section]
\newtheorem{lem}[thm]{Lemma}

\newtheorem{prop}[thm]{Property}
\newtheorem{definition}[thm]{Definition}
\newtheorem{remark}[thm]{Remark}
\newtheorem{cor}[thm]{Corollary}

\numberwithin{thm}{section}
\numberwithin{equation}{section}

\begin{document}
%

\title{Series Representation of Jointly S$\alpha$S Distribution via Symmetric Covariations}

\author[1]{Yujia Ding\thanks{yujia.ding@cgu.edu}}
\author[1]{Qidi Peng\thanks{qidi.peng@cgu.edu}}
\affil[1]{Institute of Mathematical Sciences, Claremont Graduate University, Claremont, CA 91711, USA}

\date{}

\maketitle

\begin{abstract}
We introduce the notion of symmetric covariation, which is a new measure of dependence between two components of a symmetric $\alpha$-stable random vector, where the stability parameter $\alpha$ measures the heavy-tailedness of its distribution. Unlike  covariation that exists only when  $\alpha\in(1,2]$, symmetric covariation is well defined for all $\alpha\in(0,2]$. We show that  symmetric covariation can be defined using the proposed generalized fractional derivative, which has broader usages than those involved in this work. Several properties of symmetric covariation have been derived. These are either similar to or more general than those of the covariance functions in the Gaussian case. The main contribution of this framework is the representation of the characteristic function of bivariate symmetric $\alpha$-stable distribution via convergent series based on a sequence of symmetric covariations. This  series representation extends the one of bivariate Gaussian.
\begin{flushleft}
\textbf{Keywords: } symmetric $\alpha$-stable random vector; symmetric covariation; generalized fractional derivative; series representation

\textbf{MSC (2010): } 60E07 $\cdot$ 60E10 $\cdot$ 62E17
\end{flushleft}
\end{abstract}

\section{Background and Motivation}
\label{Background and Motivation}
 Stable distributions are a general family of probability distributions, which include many well-known distributions, such as Gaussian, Cauchy, and L\'evy  distributions. Recently, they have been successfully applied in various fields, such as network traffic modeling, economics, finance, physics, biology, geology, and signal processing \cite{Samorodnitsky1993linear, Taqqu1994,Stoev2002,Stoev2004,Xu2011modeling,Nolan2003stable}. Unlike Gaussian distributions, stable non-Gaussian distributions possibly have infinite variances or even infinite means; thus, they can be used for modeling many financial and physical phenomena that exhibit heavy-tailed behavior. The definition of stable distribution is given below (see also \cite{Taqqu1994,Nolan2003stable}).
 \begin{definition}
     Let $X_1$ and $X_2$ be independent copies of a random variable $X$. Then $X$ is said to be stable if for any $a,b > 0$, there exist $c>0$ and $e\in\mathbb R$ such that
     \begin{equation}
     \label{def:stable_distr}
     aX_1 + bX_2\stackrel{d}{=}cX + e,
     \end{equation}
     where $\stackrel{d}{=}$ denotes equality in distribution. $X$ is called strictly stable if (\ref{def:stable_distr}) holds with $e = 0$.
 \end{definition}

Stable distribution was first characterized by L\'evy \cite{Levy1925} as the only possible limiting law of sums of independent identically distributed random variables.  It was later popularized by Mandelbrot \cite{Mandelbrot1963} while describing the distributions of income and speculative prices, where the variability of these financial data is abnormally high. Since then, stable distributions have been proposed as a model for many types of physical, economic, and financial systems, when non-Gaussianity has been detected \cite{Byczkowski1993,Nolan1998,Nolan2001}.   In the area of theoretical probability, stable distributions are considered as a paradigmatic example of non-Gaussian distributions, for which many crucial functional-analysis properties, such as non-ergodicity, non-integrability and infinite divisibility, have been derived \cite{Nolan2003stable}. Although stable distributions have many intriguing mathematical properties, owing to the lack of an explicit form of probability distribution functions for all but s few specific cases (Gaussian, Cauchy, and L\'evy), the most concrete way to describe them is through their characteristic functions \cite{Taqqu1994}. 

In this paper, we focus on the so-called jointly symmetric $\alpha$-stable (S$\alpha$S) random vectors and study the measures of dependence of their marginal variables. As a natural extension of the zero-mean Gaussian random vector, the S$\alpha$S random vector is defined by its characteristic function as follows:
\begin{definition}
\label{def:stable_vec}
A real-valued random vector $\bm X=(X_1,\ldots,X_d)$ ($d\ge1$) is said to be jointly symmetric $\alpha$-stable (S$\alpha$S) with $\alpha\in(0,2]$ if and only if there exists some symmetric finite nonnegative measure $\bm{\varGamma_X}$ on the unit sphere $S_d=\{\bm s\in\mathbb R^d:\|\bm s\|=1\}$ ($\|\cdot\|$ denotes the Euclidean norm) such that 
\begin{equation}
\label{ch:stable}
\mathbb E\big(e^{i\langle\bm \theta, \bm X\rangle}\big)=\exp\left(-\int_{S_d}\left|\langle\bm \theta,\bm s\rangle\right|^\alpha\bm{\varGamma_X}(\ud \bm s)\right),~\mbox{for all $\bm\theta\in\mathbb R^d$},
\end{equation}
where:
\begin{itemize}
\item $\langle\bm \theta,\bm s\rangle=\sum_{k=1}^d\theta_ks_k$ denotes the canonical inner product of $\bm \theta$ and $\bm s$. 
\item $\alpha$ is called the stability parameter or characteristic exponent. It is one of the key parameters characterizing a jointly S$\alpha$S distribution, which measures the level of heavy tails of $\bm X$. In particular, when $\alpha\in(0,2)$, any $k$-th order moment of a stable random variable explodes once $k\ge\alpha$.
\item The symmetric finite nonnegative measure $\bm{\varGamma_X}$ is called the spectral measure of $\bm X$. This functional parameter, together with the stability parameter $\alpha$, fully captures the joint distribution of $\bm X$. It is worth noting that $\bm{\varGamma_X}$ is unique  in the non-Gaussian case, i.e., $\alpha\in(0,2)$; it is not unique when $\bm X$ is Gaussian ($\alpha=2$, see, e.g., \cite{Taqqu1994}).
\end{itemize}
\end{definition}

In the univariate case ($d=1$), $\bm{\varGamma_X}$ is degenerate to a nonnegative-valued parameter. Then for each single-valued S$\alpha$S random variable $X$, there is some $\gamma\ge0$ such that the characteristic function of $X$ takes the following particularly simple form:
$$
\mathbb E(e^{i\theta X})=\exp\left(-\gamma^\alpha|\theta|^\alpha\right),~\mbox{for $\theta\in\mathbb R$}.
$$
We then use the notation $S(\alpha,\gamma)$ to denote the distribution of $X$.

In the more general case ($d\ge2$), if we define the essential component of the characteristic function in (\ref{ch:stable})  by
\begin{equation}
\label{scaling}
\sigma_{\bm X}(\bm \theta)=\left(\int_{S_d}\left|\langle\bm \theta,\bm s\rangle\right|^\alpha\bm{\varGamma_X}(\ud \bm s)\right)^{1/\alpha},
\end{equation}
we can then rewrite (\ref{ch:stable}) as
$$
\mathbb E\big(e^{i\langle\bm \theta, \bm X\rangle}\big)=\exp\left(-\sigma_{\bm X}^\alpha(\bm \theta)\right),~\mbox{for all $\bm\theta\in\mathbb R^d$}.
$$
Here $\sigma_{\bm X}(\bm \theta)$ is called the scale parameter of the random variable $\langle\bm\theta,\bm X\rangle$.

As a family member of stable distributions, jointly S$\alpha$S random vectors are particularly useful for modeling high-frequency data that exhibit outliers with a large probability. For example, in  network traffic modeling, if traffic is generated by many users sharing a single fast link \cite{Willinger1997self}, and if each user has similar behavior and limited access to the link bandwidth, then a Gaussian process can be applied to approximate the physical on-off type model. However, if users have very irregular behavior patterns and unlimited access to the link bandwidth, then large peaks of traffic activity will be observed. These peaks in the traffic cannot be modeled by Gaussian or finite-variance processes as their behavior is no longer of the Poisson type. Another example is for asset price modeling wherein empirical studies show the so-called asymmetric leptokurtic features \cite{Nolan2003modeling}.  That is, the stock return distribution has skewness and a higher peak and has two heavier tails than those of the Gaussian distribution. Heavy-tailed processes, with possibly infinite variance distributions, are more natural and appropriate models in the above two examples. Therefore, with regard to the examples described above, jointly S$\alpha$S random vectors with $\alpha<2$  can be used to model the traffic of modern computer telecommunication networks \cite{Stoev2004}, and the stock returns \cite{Xu2011modeling}.  From a statistical point of view, most of the attention has been on estimating of the stability parameter $\alpha$ of jointly stable distributions \cite{Mcculloch1986simple,Davydov1999,Nolan2003modeling,Hu2009least,Xu2011modeling}. For a more general setting with stable processes (e.g., linear fractional and linear multifractional stable processes), the estimation of $\alpha$ and the process paths' H\"older parameter have both been extensively studied in \cite{Ayache2012,Ayache2015,Ayache2017}. In our framework, we study the dependence among the components of a jointly S$\alpha$S random vector. Consequently, a series representation of the bivariate S$\alpha$S random vector's characteristic function is derived.

A series representation of a characteristic function is instrumental in both probability theory and statistics as its corresponding probability distribution can be shown to be completely described using a sequence of coefficients (distribution parameters).  The characteristic function of a jointly S$\alpha$S random vector is real-valued,  and therefore its convergent series representation can serve as an approximation tool of the jointly S$\alpha$S distribution. The series representation can also help in the study of the dependence between the marginal distributions of a jointly S$\alpha$S random vector.

Recall that the characteristic function of a zero-mean bivariate Gaussian vector $\bm X=(X_1,X_2)$ can be represented via a convergent series in the following way:
\begin{equation}
\label{ch:gaussian}
\mathbb E\big(e^{i\langle\bm\theta,\bm X\rangle}\big)=e^{-\sigma_{\bm X}^\alpha(\bm\theta)}=\exp\left(-\sum_{k=0}^{+\infty}\frac{c_k(X_1,X_2,\alpha)}{k!}\theta_1^k\theta_2^{\alpha-k}\right),~\mbox{for all $\bm\theta\in\mathbb R^2$},
\end{equation}
where $\alpha=2$ and the sequence of real numbers $(c_k(X_1,X_2,\alpha))_{k\ge0}$ is given by
\begin{eqnarray*}
&&c_0(X_1,X_2,\alpha)=\frac{1}{2}Var(X_2),~c_1(X_1,X_2,\alpha)=Cov(X_1,X_2),\\
&&c_2(X_1,X_2,\alpha)=Var(X_1),~ \mbox{and}~c_k(X_1,X_2,\alpha)=0~\mbox{for}~k\ge3. 
\end{eqnarray*}
In other words, the probability distribution of a zero-mean bivariate Gaussian random vector is entirely captured by its second-order moments $Var$ $(X_1)$, $Var(X_2)$ and $Cov(X_1,X_2)$. Evidently, (\ref{ch:gaussian}) does not hold for all jointly S$\alpha$S distributions since their second-order moments are infinite when $\alpha<2$. Therefore, it is natural to wonder whether some series representation are similar to that given in (\ref{ch:gaussian}) exists for all bivariate S$\alpha$S random vectors. More specifically, 
for any bivariate S$\alpha$S random vector $\bm X=(X_1,X_2)$ with $\alpha\in(0,2]$, does there exist a sequence of real numbers $(\widetilde{c_k}(X_1,X_2,\theta_1,\theta_2,$ $\alpha))_{k\ge0}$ such that
\begin{equation}
\label{problem}
\sigma_{\bm X}^\alpha(\theta_1,\theta_2)=\sum_{k=0}^{+\infty}\frac{\widetilde{c_k}(X_1,X_2,\theta_1,\theta_2,\alpha)}{k!},~\mbox{for all $(\theta_1,\theta_2)\in\mathbb R^2$}?
\end{equation}
Comparing (\ref{ch:gaussian}) with (\ref{problem}) allows us to quickly see that if such representation in (\ref{problem}) exists for the jointly S$\alpha$S random vector with any $\alpha\in(0,2]$, the sequence $(\widetilde{c_k}(X_1,X_2,\theta_1,\theta_2,\alpha))_{k\ge0}$   in fact generalizes the joint moments of the Gaussian case ($\alpha=2$) to $\alpha\in(0,2]$. As a result, the sequence can then be used to represent the characteristic function of the bivariate S$\alpha$S random vector.

In S$\alpha$S distributions, there exist several extensions, or replacements, of the covariance functions in the  setting $\alpha\in(0,2)$. For example, Press \cite{Press1972} provided an explicit algebraic representation of the characteristic function of the multivariate stable distribution, and proposed an extended notion of the spectral correlation coefficient, that is applicable to a family of multivariate S$\alpha$S distributions. Paulauskas \cite{Paulauskas1976} elaborated on the properties of the characteristic function of multivariate stable distribution in \cite{Press1972} and generalized the spectral correlation coefficient. Further, Kanter \cite{Kanter1974} derived certain \enquote{linear dependence} between stable variables, and they showed that under some conditions, the conditional expectation of a stable variable given another one is linear. Subsequently, a new dependence measure of jointly S$\alpha$S random variables, called covariation, was proposed in the studies of Cambanis and Miller \cite{Cambanis1981} and Miller \cite{Miller1978}.  Covariation extends the covariance function when $\alpha=2$. However, as a dependence measure, covariation is not symmetric, does not apply for $\alpha\in(0,1]$, and as we understand it, does not satisfy (\ref{problem}).

A number of efforts have been made to overcome the asymmetry of covariation and address all $\alpha\in(0,2]$. For example,  codifference \cite{Taqqu1994}, together with its generalizations \cite{Astrauskas1991,Kokoszka1994,Kokoszka1995}, and the signed symmetric covariation coefficient \cite{Garel2009a}, were introduced as symmetric measures of dependence. Kodia and Garel \cite{Kodia2014} showed that in the case of sub-Gaussian random vectors, signed symmetric covariation coefficient coincides with the generalized spectral correlation coefficient in \cite{Paulauskas1976}. More recently, Damarackas and Paulauskas \cite{Damarackas2014} demonstrated that the spectral covariance term can be used to measure the dependence between more general random variables in the domain of normal attraction of general stable vectors; examples include the linear (Ornstein-Uhlenbeck, log-fractional, linear fractional) stable processes. Damarackas and Paulauskas \cite{Damarackas2016,Damarackas2017} studied the properties of a very general family of measures of dependence (defined by Equations (12) - (15) in \cite{Damarackas2017}), which are symmetric and well-defined for $\alpha\in(0,2]$. The covariation, the spectral covariance, and the $\alpha$-spectral covariance \cite{Damarackas2017} are in fact members of this family. 

None of the above concepts will lead to the solution of Eq. (\ref{problem}). Hence, our framework  aims to obtain a new type of dependence measurement, called symmetric covariation, which extends the covariance functions and satisfies Eq. (\ref{problem}). Note that this symmetric covariation belongs to the general family of measures of dependence introduced in \cite{Damarackas2017}.

The introduction to symmetric covariation is inspired by covariation. Proposed by Cambanis and Miller \cite{Cambanis1981} and Miller \cite{Miller1978},  covariation is introduced to substitute the covariance function of the jointly S$\alpha$S random vector when $\alpha\in(1,2)$. It is a measure of the dependence between any two variables in the jointly S$\alpha$S random vector. Recall that the conventional covariation is defined as follows:
\begin{definition}
\label{def:covariation}
Let $\bm{X}=(X_1,X_2)$ be an S$\alpha $S random vector with $\alpha\in(1,2]$ and spectral measure $\bm{\varGamma_{X}}$.
 The covariation of $X_1$ on $X_2$ is the real number
\begin{equation*}
[X_1,X_2]_\alpha=\int_{S_2}s_1s_2^{\langle \alpha-1\rangle}\bm{\varGamma_X}(\ud \bm s),
\end{equation*}
where $S_2=\{\bm s\in\mathbb R^2:\|\bm s\|=1\}$ and for any $p\in\mathbb R$, the so-called signed power $a^{\langle p\rangle }$ is defined by
\begin{equation}
\label{sign_power}
a^{\langle p\rangle}=
|a|^p\textnormal{sign}(a)~\mbox{with}~\textnormal{sign}(a)=\left\{
\begin{array}{ll}
1&~\mbox{if $a>0$},\\
0&~\mbox{if $a=0$},\\
-1&~\mbox{if $a<0$}.
\end{array}\right.
\end{equation}
\end{definition}

It is easy to observe that, covariation generalizes the covariance function of the Gaussian random vector $(X_1,X_2)$ (see \cite{Taqqu1994}), in the sense that 
$
[X_1,X_2]_2=\frac{1}{2}Cov(X_1,X_2).
$
Although the covariation $[X_1,X_2]_{\alpha}$ seems to be a natural extension of the covariance $Cov(X_1,X_2)$, unfortunately, it lacks some of the desirable and strong properties of the covariance. For example, covariation is non-symmetric and linear only on its first argument. Moreover, it is defined only for $\alpha\in(1,2]$, and cannot be easily extended to the case $\alpha\le1$. For a more detailed introduction to covariation, see Chapter 2 of \cite{Taqqu1994}.

In order to find $\widetilde{c_k}$'s in (\ref{problem}) for the jointly S$\alpha$S distributions, in what follows we aim to introduce a new type of covariation, which is symmetric and well-defined for all $\alpha\in(0,2]$. Recall that the conventional covariation can be equivalently defined by taking the first-order derivative of the scale parameter (see, e.g., Definition 2.7.3 in \cite{Taqqu1994}). I.e.,
\begin{equation}
\label{ordinary_derivative}
[X_1,X_2]_{\alpha}=\frac{1}{\alpha}\frac{\partial \sigma_{\bm X}^{\alpha}(\theta_1,\theta_2)}{\partial \theta_1}\Big|_{\theta_1=0,\theta_2=1}.
\end{equation}
However, the right-hand side of the above equation becomes ill-defined when $\alpha<1$. To generalize the conventional covariations to the case $\alpha\in(0,2]$ using a similar approach, a discussion of fractional derivative will be included. To this end we will propose a new type of fractional derivative that generalizes the Riemann-Liouville fractional derivative. This new proposed fractional derivative is well-defined over the entire $\mathbb R$, which has its own contribution to real analysis.

The rest of this paper is organized as follows. In Sect. \ref{section:fractional_derivative}, as our first contribution, we provide an  extension of the Riemann-Liouville fractional derivative to define symmetric covariations. In Sect. \ref{section:symmetric_covariation}, we define symmetric covariations and discuss the properties of symmetric covariations. In Sect. \ref{section:series}, we provide the solution of Eq. (\ref{problem}) through a convergent series representation of $\sigma_{\bm X}^\alpha(\theta_1,\theta_2)$, based on the symmetric covariations. In Sect. \ref{dependency}, we study the relationship between the symmetric covariations and the dependence of S$\alpha$S variables. Finally, in Sect. \ref{conclusion}, we summarizes the entire  framework and discusses some future problems on this topic. Technical lemmas and proofs of statements are given in ``Appendix".

\section[Generalized Riemann–Liouville Fractional Derivative]{Generalized Riemann–Liouville Fractional\\ Derivative}
\label{section:fractional_derivative}
There exist various notions of fractional derivatives and fractional integrals; examples include Riemann–Liouville fractional integrals, Hadamard fractional integrals, Riemann–Liouville fractional derivatives, Caputo fractional derivatives, and the composition of the left and right (or the right and left) Riemann-Liouville fractional order integrals. We refer the readers to the following literature  \cite{Podlubny1999,Diethelm2010,Miller1993,Atangana2016,Herzallah2014,Malinowska2015,Malinowska_2,Ciesielskia2017} and the references therein.

Among the above notions of fractional derivatives, Riemann–Liouville fractional derivative is one of the most natural extensions of the ordinary derivative. However, because it consists of the left and the right Riemann–Liouville fractional derivatives, and neither of them is defined over $\mathbb{R}$, we can not apply it as an replacement of the ordinary derivative to (\ref{ordinary_derivative}). Therefore, we propose the following generalized Riemann–Liouville fractional derivative.
\begin{definition}
\label{derivative}
Let $f$ be a continuous function defined over $\mathbb R$, and let $x,a\in \mathbb R$, $\beta\in\mathbb R_+$, then the fractional derivative of $f$ of order $\beta$ on $x$ is defined by
\begin{equation}
\label{ge:derivative}
{}_a^m\text{D}_x^\beta f(x)=\frac{1}{\Gamma(n-\beta)}\frac{\ud^n}{\ud x^n}\int_a^xf(t)(x-t)^{n}|x-t|^{-\beta-1}\textnormal{sign}^{m+1}(x-t)\ud t,
\end{equation}
where $\Gamma(\cdot)$ denotes the gamma function, $m=0$ or $1$ and $n=\lfloor \beta\rfloor+1$, with $\lfloor\cdot\rfloor$ being the floor number.
\end{definition}

Note that in Definition \ref{derivative}, the choices of $x$ and $a$ can be arbitrary in $\mathbb R$, and in fact $m$ can be any nonnegative integer. According to (\ref{ge:derivative}), however,  for any nonnegative integer $m$ and any real number $x$, we have
$$
{}_a^m\text{D}_x^\beta f(x)=\left\{
\begin{array}{ll}
{}_a^0\text{D}_x^\beta f(x)&\mbox{if $m$ is even},\\
{}_a^1\text{D}_x^\beta f(x)&\mbox{if $m$ is odd},
\end{array}\right.$$
so it suffices to pick $m$ only from $\{0,1\}$. This new proposed fractional derivative generalizes the left and right Riemann-Liouville fractional derivatives to a single form; moreover, it extends the ordinary integer-order derivative.
\begin{remark}
\label{rmk:left_right}
${}_a^m\text{D}_x^\beta$ with $\beta\in\mathbb R_+\backslash\mathbb Z_+$ extends both the left and right Riemann-Liouville fractional derivatives in the following sense:
\begin{itemize}
\item If $x\ge a$, then ${}_a^m\text{D}_x^\beta$ can be rewritten as
$$
{}_a^m\text{D}_x^\beta f(x)
=\frac{1}{\Gamma(n-\beta)}\frac{\ud^n}{\ud x^n}\int_a^xf(t)(x-t)^{n-\beta-1}\ud t,
$$
which is the left Riemann-Liouville fractional derivative of $f$ on $x$.
\item If $x<a$, then ${}_a^m\text{D}_x^\beta$ can be rewritten as
$$
    {}_a^m\text{D}_x^\beta f(x)=\frac{(-1)^{m+n}}{\Gamma(n-\beta)}\frac{\ud^n}{\ud x^n}\int_x^af(t)(t-x)^{n-\beta-1}\ud t.
$$
We observe that in this case $(-1)^m {}_a^m\text{D}_x^\beta f(x)$ is the right Riemann-Liouville fractional derivative of $f$ on $x$.
\end{itemize}
In conclusion we can alternatively write
\begin{equation}
\label{def:derivative}
{}_a^m\text{D}_x^\beta f(x)=\mathds 1_{\{x\ge a\}}{}_a\widetilde {\text{D}}_x^\beta f(x)+(-1)^m\mathds 1_{\{x<a\}}{}_x\widetilde {\text{D}}_a^\beta f(x),
\end{equation}
where ${}_a\widetilde{\text{D}}_x^\beta$ and ${}_x\widetilde {\text{D}}_a^\beta$ denote the left and right Riemann-Liouville fractional derivatives, respectively, i.e.,
\begin{eqnarray*}
&&{}_a\widetilde {\text{D}}_x^\beta f(x)=\frac{1}{\Gamma(n-\beta)}\frac{\ud^n}{\ud x^n}\int_a^xf(t)(x-t)^{n-\beta-1}\ud t,\\
&&{}_x\widetilde {\text{D}}_a^\beta f(x)=\frac{(-1)^n}{\Gamma(n-\beta)}\frac{\ud^n}{\ud x^n}\int_x^af(t)(t-x)^{n-\beta-1}\ud t.
\end{eqnarray*}
\end{remark}

\begin{remark}
When $\beta\in\mathbb{Z}_+$, we have for $x\in\mathbb R$,
\begin{eqnarray}\label{integer_case}
{}_{a}^{m}\text{D}_x^\beta f(x)&=&\frac{\ud^n}{\ud x^n}\int_a^xf(t)(x-t)^{n}|x-t|^{-n}\textnormal{sign}^{m+1}(x-t)\ud t\nonumber\\
&=&\mathds 1_{\{x\ge a\}}\frac{\ud^n}{\ud x^n}\int_a^xf(t)\ud t+\mathds 1_{\{x< a\}}(-1)^{m+n+1}\frac{\ud^n}{\ud x^n}\int_a^xf(t)\ud t\nonumber\\
&=&\textnormal{sign}^{m+n+1}(x-a)\frac{\ud^\beta  f(x)}{\ud x^\beta},
\end{eqnarray}
where $n=\beta+1$. 
In particular, if $m=M(\beta)$ with $M(k)=(k \mod 2)$, 
\begin{eqnarray*}
{}_{a}^{m}\text{D}_x^\beta f(x)=\mathds{1}_{\{x\ne a\}}\frac{\ud^\beta  f(x)}{\ud x^\beta},
\end{eqnarray*}
which extends the ordinary $\beta$-th order derivative.
\end{remark}

The following lemma expresses an important result of applying the new defined fractional derivative; the result will be involved in the second definition of symmetric covariations introduced in Sect. \ref{section:symmetric_covariation}. The proof of this lemma can be found in ``Appendix" (see Sect. \ref{subsection:proof_of_poly}).
\begin{lem}
\label{poly}
For $p>-1$, $x,a\in\mathbb R$, $\beta\ge0$, and $m\in\{0,1\}$, 
\begin{equation}
\label{dev:poly}
{}_{a}^{m}\text{D}_x^\beta\left(|x-a|^{p}\right)=\frac{\Gamma(p+1)}{\Gamma(p-\beta+1)}|x-a|^{p-\beta}\textnormal{sign}^{m}(x-a).
\end{equation}
\end{lem}

\section{Symmetric Covariations}
\label{section:symmetric_covariation}
In this section, we introduce the new measure of dependence between two variables of jointly S$\alpha $S distribution; it is called symmetric covariations. We compare its properties with those of covariation and covariance. This new measurement turns out to be symmetric and well-defined on $\alpha\in(0,2]$, making it more general than covariation.
\begin{definition}
\label{covariation}
Let $\bm{X}=(X_1,X_2)$ be an S$\alpha $S random vector with some $\alpha\in(0,2]$ and some spectral measure $\bm{\varGamma_{X}}$. The symmetric covariations of $X_1$ and $X_2$ are defined by:
for $\beta\ge0$ and $m=0$ or $1$,
\begin{equation}
\label{representation}
[X_1,X_2]_{\alpha,\beta,m}=\int_{S_2} K_{\alpha,\beta,m}(s_1,s_2)\bm{\varGamma_X}(\ud \bm s),
\end{equation}
where the bivariate function $K_{\alpha,\beta,m}$ is given as: for all $s_1,s_2\in\mathbb R$,
\begin{eqnarray}
\label{K}
K_{\alpha,\beta,m}(s_1,s_2)\hspace{-0.1cm}&=&\hspace{-0.1cm} |s_1|^{\beta}|s_2|^{\alpha-\beta}\textnormal{sign}^{m}(s_1s_2)\mathds 1_{\{|s_1|\le |s_2|\}}\nonumber\\&&+|s_1|^{\alpha-\beta}|s_2|^{\beta}\textnormal{sign}^{m}(s_1s_2)\mathds 1_{\{|s_1|>|s_2|\}}.
\end{eqnarray}
\end{definition}

\begin{remark}[On the function $K_{\alpha,\beta,m}$] From (\ref{K}) we can write
\begin{eqnarray*}
&&K_{\alpha,\beta,m}(s_1,s_2)= |s_1|^{\beta}|s_2|^{\alpha-\beta}\textnormal{sign}^{m}(s_1s_2)\mathds 1_{\{|s_1|< |s_2|\}}\\
&&\hspace{1cm}+|s_1|^{\alpha}\textnormal{sign}^{m}(s_1^2)\mathds 1_{\{|s_1|= |s_2|\}}+|s_1|^{\alpha-\beta}|s_2|^{\beta}\textnormal{sign}^{m}(s_1s_2)\mathds 1_{\{|s_1|>|s_2|\}},
\end{eqnarray*}
which indicates that the bivariate function $K_{\alpha,\beta,m}$ is symmetric. Moreover, since $K_{\alpha,\beta,m}$ satisfies the equations (12) - (14) in \cite{Damarackas2017}, the symmetric covariation in (\ref{representation}) in fact belongs to the general family of dependence measurements introduced in  \cite{Damarackas2017}.
\end{remark}

\begin{remark}[On the parameter $m$]
\label{rmk3}
In view of Definition \ref{covariation} we have: for $\alpha\in(0,2]$ and $\beta\ge0$,
\begin{eqnarray*}
&&[X_1,X_2]_{\alpha,\beta,0}=\int_{\{(s_1,s_2)\in S_2:~|s_1|\le|s_2|\}} |s_1|^{\beta}|s_2|^{\alpha-\beta}\bm{\varGamma_X}(\ud \bm s)\\
&&\hspace{3cm}+\int_{\{(s_1,s_2)\in S_2:~|s_1|>|s_2|\}} |s_1|^{\alpha-\beta}|s_2|^{\beta}\bm{\varGamma_X}(\ud \bm s)
\end{eqnarray*}
and 
\begin{eqnarray*}
&&[X_1,X_2]_{\alpha,\beta,1}=\int_{\{(s_1,s_2)\in S_2:~|s_1|\le|s_2|\}} s_1^{\langle\beta\rangle}s_2^{\langle\alpha-\beta\rangle}\bm{\varGamma_X}(\ud \bm s)\\
&&\hspace{3cm}+\int_{\{(s_1,s_2)\in S_2:~|s_1|>|s_2|\}} s_1^{\langle\alpha-\beta\rangle}s_2^{\langle\beta\rangle}\bm{\varGamma_X}(\ud \bm s),
\end{eqnarray*}
where the signed power $a^{\langle \cdot\rangle }$ is defined in (\ref{sign_power}).
\end{remark}

\begin{remark}[On the parameter $\beta$]  From the definition of $K_{\alpha,\beta,m}$ in (\ref{K}), we can easily observe that the mapping $\beta\mapsto [X_1,X_2]_{\alpha,\beta,0}$ is decreasing.
\end{remark}

\begin{remark}
\label{remark:relation_convertional_covariation} 
The following relationship between the symmetric covariations and conventional covariations holds: 
	when $\alpha\in(1,2]$, 
    \begin{equation}
    \label{syc_co}
    [X_1,X_2]_{\alpha,1,1}+[X_1,X_2]_{\alpha,\alpha-1,1}=[X_1,X_2]_{\alpha}+[X_2,X_1]_{\alpha},
    \end{equation}
    where $[X_1,X_2]_{\alpha}$ and $[X_2,X_1]_{\alpha}$ are the conventional covariations defined in Definition $\ref{def:covariation}$. In fact, (\ref{syc_co}) is a straightforward consequence of Eq. (\ref{result_1}).
\end{remark}

\begin{remark} Symmetric covariations generalize covariances, because of the fact that when $\bm X=(X_1, X_2)$ is a Gaussian vector, $2[X_1,X_2]_{2,1,1}$ is equal to the covariance $Cov(X_1,X_2)$. Indeed, taking $\alpha=2$, $\beta=1$ and $m=1$  in Definition $\ref{covariation}$, we have
$$[X_1,X_2]_{2,1,1}=\int_{ S_2} s_1^{\langle1\rangle}s_2^{\langle1\rangle}\bm{\varGamma_X}(\ud \bm s)=\int_{ S_2} s_1s_2\bm{\varGamma_X}(\ud \bm s),$$
where $\int_{ S_2} s_1s_2\bm{\varGamma_X}(\ud \bm s)$ can be shown to equal to $2^{-1}Cov(X_1,X_2)$ in the Gaussian case, according to Example 2.7.2 in \cite{Taqqu1994}.
\end{remark}

The theorem below states a definition of symmetric covariation for computational purpose. It is equivalent to Definition \ref{covariation}, but is not related to the representations in (\ref{representation}); it expresses the symmetric covariation through its characteristic function. This second definition also indicates that symmetric covariations are well-defined for all $\alpha\in(0,2]$.
\begin{thm}
\label{covariation_2}
Let $\bm{X}=(X_1,X_2)$ be an S$\alpha $S random vector with $\alpha\in(0,2]$ and spectral measure $\bm{\varGamma_{X}}$. The symmetric covariation of $X_1$ and $X_2$ can be equivalently defined by: for $\beta\ge0$ and $m=0$ or $1$,
\begin{eqnarray*}
&&[X_1,X_2]_{\alpha,\beta,m}=\frac{\Gamma(\alpha-\beta+1)}{\Gamma(\alpha+1)}\\
&&\hspace{0.2cm}\times\left(\lim_{(\theta_1,\theta_2)\to(0,1)}\int_{\{(s_1,s_2)\in S_2:~|s_1|\le|s_2|\}}\leftidx{_{-\theta_2s_2/s_1}^{~~~~~~~m}}{\text{D}}{_{\theta_1}^\beta}|\theta_1 s_1+\theta_2s_2|^{\alpha}\bm{\varGamma_X}(\ud \bm s)\right.\\
&&\hspace{0.2cm}\left.+\lim_{(\theta_1,\theta_2)\to(1,0)}\int_{\{(s_1,s_2)\in S_2:~|s_1|>|s_2|\}}\leftidx{_{-\theta_1s_1/s_2}^{~~~~~~~m}}{\text{D}}{_{\theta_2}^\beta}|\theta_1 s_1+\theta_2s_2|^{\alpha}\bm{\varGamma_X}(\ud \bm s)\right).
\end{eqnarray*}
\end{thm}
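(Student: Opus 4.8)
The plan is to show the two definitions agree by evaluating each fractional-derivative integrand pointwise through Lemma~\ref{poly} and then recognizing the outcome as the kernel $K_{\alpha,\beta,m}$ of Definition~\ref{covariation}. The two integrals are handled symmetrically, so I will describe the first one in detail and then indicate how the coordinates are swapped for the second.

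Fix $(s_1,s_2)\in S_2$ with $0<|s_1|\le|s_2|$ and set $a=-\theta_2 s_2/s_1$. The key observation is that, viewed as a function of $x=\theta_1$,
$$
|\theta_1 s_1+\theta_2 s_2|^{\alpha}=|s_1|^{\alpha}\,|x-a|^{\alpha},
$$
because $x-a=(\theta_1 s_1+\theta_2 s_2)/s_1$. Since $|s_1|^{\alpha}$ is constant in $x$ and ${}_{a}^{m}\text{D}_{x}^{\beta}$ is linear, this factor pulls out and I may invoke Lemma~\ref{poly} with $p=\alpha>-1$ to obtain
$$
{}_{a}^{m}\text{D}_{\theta_1}^{\beta}\,|\theta_1 s_1+\theta_2 s_2|^{\alpha}
=|s_1|^{\alpha}\,\frac{\Gamma(\alpha+1)}{\Gamma(\alpha-\beta+1)}\,|x-a|^{\alpha-\beta}\,\textnormal{sign}^{m}(x-a).
$$
Substituting $x-a=(\theta_1 s_1+\theta_2 s_2)/s_1$ back in, and using $|x-a|^{\alpha-\beta}=|\theta_1 s_1+\theta_2 s_2|^{\alpha-\beta}|s_1|^{\beta-\alpha}$ together with $\textnormal{sign}(x-a)=\textnormal{sign}(\theta_1 s_1+\theta_2 s_2)\,\textnormal{sign}(s_1)$, the powers of $|s_1|$ collapse to $|s_1|^{\beta}$, leaving the integrand
$$
\frac{\Gamma(\alpha+1)}{\Gamma(\alpha-\beta+1)}\,|s_1|^{\beta}\,|\theta_1 s_1+\theta_2 s_2|^{\alpha-\beta}\,\textnormal{sign}^{m}(\theta_1 s_1+\theta_2 s_2)\,\textnormal{sign}^{m}(s_1).
$$

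Next I let $(\theta_1,\theta_2)\to(0,1)$. Pointwise the integrand converges to $\frac{\Gamma(\alpha+1)}{\Gamma(\alpha-\beta+1)}|s_1|^{\beta}|s_2|^{\alpha-\beta}\textnormal{sign}^{m}(s_1 s_2)$, where I use $\textnormal{sign}^{m}(s_2)\textnormal{sign}^{m}(s_1)=\textnormal{sign}^{m}(s_1 s_2)$; multiplying by the prefactor $\Gamma(\alpha-\beta+1)/\Gamma(\alpha+1)$ cancels the gamma ratio and reproduces exactly the first branch of $K_{\alpha,\beta,m}$ in (\ref{K}) on $\{|s_1|\le|s_2|\}$. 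To pass the limit under the integral sign I will use dominated convergence: on $\{|s_1|\le|s_2|\}$ one has $|s_2|\ge 1/\sqrt2$, so for $(\theta_1,\theta_2)$ near $(0,1)$ the quantity $\theta_1 s_1+\theta_2 s_2$ stays bounded away from $0$ and $|\theta_1 s_1+\theta_2 s_2|$ stays bounded above; hence the integrand is uniformly bounded on the sphere (even when $\alpha-\beta<0$), and finiteness of $\bm{\varGamma_X}$ justifies the interchange.

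The second term follows by the same argument with the coordinates exchanged: taking $a=-\theta_1 s_1/s_2$ one writes $|\theta_1 s_1+\theta_2 s_2|^{\alpha}=|s_2|^{\alpha}|x-a|^{\alpha}$ with $x=\theta_2$, applies Lemma~\ref{poly}, and lets $(\theta_1,\theta_2)\to(1,0)$ to recover the second branch of $K_{\alpha,\beta,m}$ on $\{|s_1|>|s_2|\}$, where now $|s_1|>1/\sqrt2$ supplies the domination. Summing the two pieces gives $\int_{S_2}K_{\alpha,\beta,m}(s_1,s_2)\,\bm{\varGamma_X}(\ud\bm s)=[X_1,X_2]_{\alpha,\beta,m}$, which is Definition~\ref{covariation}. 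The only delicate point is the degenerate direction $s_1=0$ (resp. $s_2=0$), where the base point $a$ blows up; there I will observe that for $\beta>0$ the factor $|s_1|^{\beta}$ (resp. $|s_2|^{\beta}$) makes both the integrand and the corresponding branch of $K$ vanish, so $\{s_1=0\}$ (resp. $\{s_2=0\}$) is handled by continuous extension, while the case $\beta=0$ is verified directly from Lemma~\ref{poly}. The main obstacle is therefore not any single computation but organizing the sign-and-exponent bookkeeping together with the domination estimate cleanly across these degenerate directions.
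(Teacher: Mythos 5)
Your proposal is correct and follows essentially the same route as the paper's own proof: apply Lemma \ref{poly} pointwise to each fractional-derivative integrand (pulling out the constant $|s_1|^{\alpha}$ or $|s_2|^{\alpha}$), recognize the resulting kernel, and pass to the limits $(\theta_1,\theta_2)\to(0,1)$ and $(1,0)$ to recover the two branches of $K_{\alpha,\beta,m}$. If anything, your write-up is slightly more careful than the paper's, which only checks finiteness of the limiting integrals rather than giving your explicit dominated-convergence bound via $|s_2|\ge 1/\sqrt2$ (resp.\ $|s_1|>1/\sqrt2$) and which does not address the degenerate directions $s_1=0$ or $s_2=0$ at all.
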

\begin{proof} 
For $\beta\ge0$, applying Lemma \ref{poly}, we obtain
\begin{eqnarray}\label{second_def_proof_1}
&&\int_{\{(s_1,s_2)\in S_2:~|s_1|\le|s_2|\}}{}_{-\theta_2s_2/s_1}^{~~~~~~~m}\text{D}_{\theta_1}^\beta\left(|s_1|^{\alpha}\left|\theta_1 -\left(-\frac{\theta_2s_2}{s_1}\right)\right|^{\alpha}\right)\bm{\varGamma_X}(\ud \bm s)\nonumber\\
&&=\frac{\Gamma(\alpha+1)}{\Gamma(\alpha-\beta+1)}\int_{\{(s_1,s_2)\in S_2:~|s_1|\le|s_2|\}}|s_1|^{\alpha}\left|\theta_1+\frac{\theta_2s_2}{s_1}\right|^{\alpha-\beta}\nonumber\\&&\hspace{1cm}\times\textnormal{sign}^m\left(\theta_1+\frac{\theta_2s_2}{s_1}\right)      \bm{\varGamma_X}(\ud \bm s)
\end{eqnarray}
and
\begin{eqnarray}\label{second_def_proof_2}
&&\int_{\{(s_1,s_2)\in S_2:~|s_1|>|s_2|\}}{}_{-\theta_1s_1/s_2}^{~~~~~~~m}\text{D}_{\theta_2}^\beta\left(|s_2|^{\alpha}\left|\theta_2 -\left(-\frac{\theta_1s_1}{s_2}\right)\right|^{\alpha}\right)\bm{\varGamma_X}(\ud \bm s)\nonumber\\
&&=\frac{\Gamma(\alpha+1)}{\Gamma(\alpha-\beta+1)}\int_{\{(s_1,s_2)\in S_2:~|s_1|>|s_2|\}}|s_2|^{\alpha}\left|\theta_2+\frac{\theta_1s_1}{s_2}\right|^{\alpha-\beta}\nonumber\\&&\hspace{1cm}\times\textnormal{sign}^m\left(\theta_2+\frac{\theta_1s_1}{s_2}\right)      \bm{\varGamma_X}(\ud \bm s).
\end{eqnarray}
Observe that 
\begin{eqnarray*}
&&\lim_{(\theta_1,\theta_2)\to(0,1)}\int_{\{(s_1,s_2)\in S_2:~|s_1|\le|s_2|\}}|s_1|^{\alpha}\left|\theta_1+\frac{\theta_2s_2}{s_1}\right|^{\alpha-\beta}\bm{\varGamma_X}(\ud \bm s)\\
&&=\int_{\{(s_1,s_2)\in S_2:~|s_1|\le|s_2|\}}|s_1|^{\beta}|s_2|^{\alpha-\beta}\bm{\varGamma_X}(\ud \bm s)\\
&&\le \int_{\{(s_1,s_2)\in S_2:~|s_1|\le|s_2|\}}|s_2|^{\alpha}\bm{\varGamma_X}(\ud \bm s)\\
&&\le \int_{S_2}|s_2|^{\alpha}\bm{\varGamma_X}(\ud \bm s)<+\infty
\end{eqnarray*}
and similarly,
\begin{eqnarray*}
&&\lim_{(\theta_1,\theta_2)\to(1,0)}\int_{\{(s_1,s_2)\in S_2:~|s_1|>|s_2|\}}|s_2|^{\alpha}\left|\theta_2+\frac{\theta_1s_1}{s_2}\right|^{\alpha-\beta} \hspace{-0.3cm}   \bm{\varGamma_X}(\ud \bm s)\\&&\le \int_{S_2}|s_1|^{\alpha}\bm{\varGamma_X}(\ud \bm s)<+\infty.
\end{eqnarray*}
Theorem \ref{covariation_2} then follows from taking the limits $(\theta_1,\theta_2)$ $\to(0,1)$ and $(\theta_1,\theta_2)$ $\to(1,0)$ on both hand sides of $(\ref{second_def_proof_1})$ and $(\ref{second_def_proof_2})$  respectively.
\end{proof}

Note that, using a similar approach of that in Theorem \ref{covariation_2}, the symmetric covariation can be extended to measure the dependence between each pair of the variables of the high-dimensional S$\alpha$S random vectors. This notion hence generalizes the covariance matrix of a Gaussian vector in the case $\alpha=2$.
\begin{remark}
Let $\bm{X}=(X_1,\cdots,X_d)$, $d\ge 1$ be a $d$-dimensional S$\alpha $S random vector with $\alpha\in(0,2]$ and spectral measure $\bm{\varGamma_{X}}$. Let the scale parameter $\sigma_{\bm X}(\bm \theta)$ be defined as in (\ref{scaling}). The symmetric covariation matrix $[\bm{X},\bm{X}]_{\alpha,\beta,m}$ is then defined by 
\begin{equation*}
    [\bm{X},\bm{X}]_{\alpha,\beta,m}=\left[
    \begin{array}{cccc}
        &[X_1,X_1]_{\alpha,\beta,m} &\cdots& [X_1,X_d]_{\alpha,\beta,m}  \\
        &\vdots&\cdots&\vdots\\
        &[X_d,X_1]_{\alpha,\beta,m} &\cdots& [X_d,X_d]_{\alpha,\beta,m}  
    \end{array}\right],
\end{equation*}
where for  $i,j\in\{1,\ldots,d\}$, $\beta\ge0$ and $m=0$ or $1$,
\footnotesize
\begin{eqnarray*}
&&\hspace{-0.6cm}[X_i,X_j]_{\alpha,\beta,m}=\frac{\Gamma(\alpha-\beta+1)}{\Gamma(\alpha+1)}\\
&&\hspace{-0.6cm}\times\left(\lim_{\substack{(\theta_j,\theta_k)\to(1,0) \\ 1\le k\le d,~k\ne j}}\int_{\{(s_i,s_j)\in S_2:~|s_i|\le|s_j|\}}\leftidx{_{\hspace{-0.2cm}-\sum_{1\le k'\le d, k'\ne i}\theta_{k'}s_{k'}/s_i}^{~~~~~~~~~~~~~~~~~~~~m}}{\text{D}}{_{\theta_i}^\beta}\left|\theta_i s_i+\hspace{-0.3cm}\sum_{\substack{1\le k'\le d \\ k'\ne i}}\hspace{-0.3cm}\theta_{k'}s_{k'}\right|^{\alpha}\hspace{-0.2cm}\bm{\varGamma_X}(\ud \bm s)\right.\\
&&\hspace{-0.6cm}\left.+\lim_{\substack{(\theta_i, \theta_k)\to(1,0) \\ 1\le k\le d,~k\ne i}}\int_{\{(s_i,s_j)\in S_2:~|s_i|>|s_j|\}}\leftidx{_{\hspace{-0.2cm}-\sum_{1\le k'\le d, k'\ne j}\theta_{k'}s_{k'}/s_j}^{~~~~~~~~~~~~~~~~~~~~m}}{\text{D}}{_{\theta_j}^\beta}\left|\theta_j s_j+\hspace{-0.3cm}\sum_{\substack{1\le k'\le d \\ k'\ne j}}\hspace{-0.3cm}\theta_{k'}s_{k'}\right|^{\alpha}\hspace{-0.2cm}\bm{\varGamma_X}(\ud \bm s)\right).
\end{eqnarray*}
\normalsize
\end{remark}

Let $\bm X=(X_1,\ldots,X_n)$ be an $n$-dimensional S$\alpha$S random vector, we study the symmetric covariation between the two weighted sums $Y_1=\sum_{k=1}^na_kX_k$ and $Y_2=\sum_{k=1}^nb_kX_k$. The main result is given in Theorem \ref{high_cov}, which requires the results of the following Lemma \ref{measure} and Lemma \ref{96} in its proof. The proof of Lemma \ref{measure} and Lemma \ref{96} can be found in Theorem 3.6.1 of \cite{Bogachev2007} and ``Appendix" (see Section \ref{subsection:proof_of_96}), respectively.
\begin{lem}\label{measure}
Given two spaces $X$ and $Y$ with $\sigma$-algebras $\mathcal{A}$ and $\mathcal{B}$ and an
$(\mathcal{A}, \mathcal{B})$-measurable mapping $f : X \rightarrow Y$. Then for any bounded (or bounded
from below) measure $\mu$ on $\mathcal{A}$, the formula
\begin{equation*}
\mu\circ f^{-1}:B\mapsto\mu\big(f^{-1}(B)\big),\quad B\in\mathcal{B}
\end{equation*}
defines a measure on $\mathcal{B}$ called the image of the measure $\mu$ under the mapping $f$.

If $\mu$ is a nonnegative measure, a $\mathcal{B}$-measurable function $g$ on $Y$ is integrable with respect to the measure $\mu\circ f^{-1}$, precisely when the function $g\circ f$ is integrable with respect to $\mu$. In addition, we have
\begin{equation*}
\int_{Y}g(y)\mu\circ f^{-1}(\ud y)=\int_{X}g(f(x))\mu(\ud x).
\end{equation*}
\end{lem}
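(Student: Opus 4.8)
The plan is to proceed by the standard measure-theoretic induction (the ``standard machine''). The statement splits naturally into two parts: first, that $\mu\circ f^{-1}$ is indeed a measure on $\mathcal{B}$; second, the change-of-variables identity together with the integrability equivalence. I would dispatch the measure part first and then bootstrap the integral identity from indicator functions up to general integrable $g$.

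For the first part, I would verify well-definedness and countable additivity. Since $f$ is $(\mathcal{A},\mathcal{B})$-measurable, $f^{-1}(B)\in\mathcal{A}$ for every $B\in\mathcal{B}$, so $\mu(f^{-1}(B))$ is meaningful. Then $\mu\circ f^{-1}(\emptyset)=\mu(f^{-1}(\emptyset))=\mu(\emptyset)=0$, and for pairwise disjoint $(B_n)_{n\ge1}\subset\mathcal{B}$ I would invoke the elementary set-theoretic facts that $f^{-1}$ commutes with countable unions and preserves disjointness, i.e. $f^{-1}\big(\bigcup_n B_n\big)=\bigcup_n f^{-1}(B_n)$ with the sets on the right still pairwise disjoint; countable additivity of $\mu\circ f^{-1}$ then follows immediately from that of $\mu$.

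For the integral identity, the crucial pointwise observation is $\mathds 1_B\circ f=\mathds 1_{f^{-1}(B)}$. I would establish the formula in four successive stages. First, for $g=\mathds 1_B$ with $B\in\mathcal{B}$, both sides equal $\mu(f^{-1}(B))$ directly from the definition of the image measure. Second, by linearity of the integral the identity extends to nonnegative $\mathcal{B}$-measurable simple functions. Third, for an arbitrary nonnegative $\mathcal{B}$-measurable $g$ I would choose simple functions $g_k\uparrow g$; then $g_k\circ f\uparrow g\circ f$ is an increasing sequence of $\mathcal{A}$-measurable functions on $X$, and two applications of the monotone convergence theorem (one on $Y$ against $\mu\circ f^{-1}$, one on $X$ against $\mu$) upgrade the simple-function identity to $g$. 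This already yields the integrability equivalence upon taking $g=|g|$: the equality $\int_Y|g|\,\ud(\mu\circ f^{-1})=\int_X|g\circ f|\,\ud\mu$ shows one side is finite exactly when the other is. Fourth, for integrable $g$ I would decompose $g=g^{+}-g^{-}$, apply the nonnegative case to each part, and subtract.

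Since every stage is routine, the ``main obstacle'' is purely bookkeeping: ensuring the monotone convergence step is applied on the correct space against the correct measure, and that the approximating simple functions chosen on $Y$ pull back to legitimate $\mathcal{A}$-measurable simple functions on $X$ (which they do, because compositions of measurable maps are measurable). A minor additional point is the ``bounded from below'' signed-measure case in the statement; there I would reduce to the finite case and use the Jordan decomposition, but as the lemma is ultimately invoked only for the nonnegative spectral measure $\bm{\varGamma_X}$, this case requires no separate treatment for our purposes.
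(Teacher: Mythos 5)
Your argument is correct and is exactly the standard proof of the pushforward/change-of-variables theorem; the paper itself gives no proof here, deferring entirely to Theorem 3.6.1 of Bogachev's \emph{Measure Theory}, which is proved by precisely the induction you describe (indicators, simple functions, monotone convergence, then $g=g^{+}-g^{-}$). Your closing remark is also on point: the lemma is only ever applied to the nonnegative spectral measure, so the signed ``bounded from below'' case never arises in this paper.
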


\begin{lem}\label{96}
Let $\bm{X}=(X_1,\ldots,X_n)$, $n\ge2$ be an S$\alpha $S random vector with $\alpha\in(0,2]$ and spectral measure $\bm{\varGamma_{X}}$, and let $\bm{Y}=(Y_1,Y_2)$ with $Y_1=\sum_{k=1}^{n}a_kX_k$ and $Y_2=\sum_{k=1}^{n}b_kX_k$, where  $a_1,\ldots,a_n,b_1,\ldots,b_n\in\mathbb R$. Then $\bm{Y}$ is also an S$\alpha$S random vector with some spectral measure $\bm{\varGamma_{Y}}$ which can be expressed as
\begin{equation*}
\bm{\varGamma_{Y}}=\widehat{\bm{\varGamma_{X}}}(h^{-1})=\widehat{\bm{\varGamma_{X}}}\circ h^{-1},
\end{equation*}
where 
\footnotesize
\begin{eqnarray}\nonumber
&&h: S_n=\left\{(s_1,\cdots, s_n): \sum_{k=1}^{n}s_k^2=1\right\}\longrightarrow S_2=\left\{(t_1, t_2): \sum_{k=1}^{2}t_k^2=1\right\},\\\nonumber
&&h(s_1,\cdots, s_n)=\left(\frac{\sum\limits_{k=1}^{n}a_ks_k}{\sqrt{\big(\sum\limits_{k=1}^{n}a_ks_k\big)^{2}+\big(\sum\limits_{k=1}^{n}b_ks_k\big)^{2}}},\frac{\sum\limits_{k=1}^{n}b_ks_k}{\sqrt{\big(\sum\limits_{k=1}^{n}a_ks_k\big)^{2}+\big(\sum\limits_{k=1}^{n}b_ks_k\big)^{2}}}\right)\nonumber,
\end{eqnarray}
\normalsize
and
\begin{equation*}
\widehat{\bm{\varGamma_{X}}}(\ud\bm{s})=\left(\left(\sum_{k=1}^{n}a_ks_k\right)^{2}+\left(\sum_{k=1}^{n}b_ks_k\right)^{2}\right)^{\alpha/2}\bm{\varGamma_{X}}(\ud\bm{s}),\quad \bm{s}\in  S_n.
\end{equation*}
\end{lem}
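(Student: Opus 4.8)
The plan is to compute the characteristic function of $\bm Y=(Y_1,Y_2)$ directly and to recognize it as that of an S$\alpha$S vector whose spectral measure is the claimed image measure. First I would fix $\bm\theta=(\theta_1,\theta_2)\in\mathbb R^2$ and observe that
$$
\langle\bm\theta,\bm Y\rangle=\theta_1\sum_{k=1}^na_kX_k+\theta_2\sum_{k=1}^nb_kX_k=\sum_{k=1}^n(\theta_1a_k+\theta_2b_k)X_k=\langle\bm\phi,\bm X\rangle,
$$
where $\bm\phi=(\theta_1a_k+\theta_2b_k)_{k=1}^n\in\mathbb R^n$. Writing $A(\bm s)=\sum_{k=1}^na_ks_k$ and $B(\bm s)=\sum_{k=1}^nb_ks_k$, so that $\langle\bm\phi,\bm s\rangle=\theta_1A(\bm s)+\theta_2B(\bm s)$, an application of (\ref{ch:stable}) to $\bm X$ gives
$$
\mathbb E\big(e^{i\langle\bm\theta,\bm Y\rangle}\big)=\exp\left(-\int_{S_n}\big|\theta_1A(\bm s)+\theta_2B(\bm s)\big|^\alpha\bm{\varGamma_X}(\ud\bm s)\right).
$$

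The key algebraic step is to factor out the radial part: on the set where $A(\bm s)^2+B(\bm s)^2\ne0$ I would write
$$
\theta_1A(\bm s)+\theta_2B(\bm s)=\sqrt{A(\bm s)^2+B(\bm s)^2}\,\langle\bm\theta,h(\bm s)\rangle,
$$
whence $|\theta_1A+\theta_2B|^\alpha=(A^2+B^2)^{\alpha/2}\,|\langle\bm\theta,h(\bm s)\rangle|^\alpha$. This precisely produces the weight defining $\widehat{\bm{\varGamma_X}}$, so that
$$
\int_{S_n}\big|\theta_1A+\theta_2B\big|^\alpha\bm{\varGamma_X}(\ud\bm s)=\int_{S_n}\big|\langle\bm\theta,h(\bm s)\rangle\big|^\alpha\,\widehat{\bm{\varGamma_X}}(\ud\bm s).
$$
I would then invoke Lemma \ref{measure} with $f=h$, $\mu=\widehat{\bm{\varGamma_X}}$ and $g(\bm t)=|\langle\bm\theta,\bm t\rangle|^\alpha$ to transport this integral to $S_2$ against the image measure $\widehat{\bm{\varGamma_X}}\circ h^{-1}$, yielding
$$
\mathbb E\big(e^{i\langle\bm\theta,\bm Y\rangle}\big)=\exp\left(-\int_{S_2}\big|\langle\bm\theta,\bm t\rangle\big|^\alpha\,\widehat{\bm{\varGamma_X}}\circ h^{-1}(\ud\bm t)\right),
$$
which is exactly of the form (\ref{ch:stable}) with spectral measure $\bm{\varGamma_Y}=\widehat{\bm{\varGamma_X}}\circ h^{-1}$.

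Before applying Lemma \ref{measure} I must verify its hypotheses. Since $\bm s\mapsto(A(\bm s)^2+B(\bm s)^2)^{\alpha/2}$ is continuous, hence bounded, on the compact sphere $S_n$, and $\bm{\varGamma_X}$ is finite, the weighted measure $\widehat{\bm{\varGamma_X}}$ is a finite nonnegative measure, while $g$ is bounded, so integrability is automatic. To confirm that $\bm{\varGamma_Y}$ is a legitimate spectral measure I would finally check symmetry: because $A(-\bm s)=-A(\bm s)$ and $B(-\bm s)=-B(\bm s)$ while the radial factor depends only on $A^2+B^2$, one has $h(-\bm s)=-h(\bm s)$ and the weight is even, so the symmetry of $\bm{\varGamma_X}$ is inherited by $\widehat{\bm{\varGamma_X}}$ and then by its image $\bm{\varGamma_Y}$, which makes $\bm Y$ jointly S$\alpha$S.

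The main obstacle will be the degenerate set $N=\{\bm s\in S_n:A(\bm s)=B(\bm s)=0\}$, on which $h$ is undefined. The clean way around it is to note that the radial weight $(A^2+B^2)^{\alpha/2}$ vanishes on $N$, so $\widehat{\bm{\varGamma_X}}(N)=0$; hence $h$ may be assigned any fixed value on $N$ without altering any of the integrals above, and the change-of-variables formula is applied off the $\widehat{\bm{\varGamma_X}}$-null set $N$. Measurability of $h$ off $N$ is immediate, being a ratio of continuous functions with nonvanishing denominator there, which completes the verification that Lemma \ref{measure} applies and thereby establishes the claimed representation of $\bm{\varGamma_Y}$.
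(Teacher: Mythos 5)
Your proposal is correct and follows essentially the same route as the paper: both compute the characteristic function of $\bm Y$, absorb the radial factor $\big(A(\bm s)^2+B(\bm s)^2\big)^{\alpha/2}$ into the weighted measure $\widehat{\bm{\varGamma_X}}$, and invoke Lemma \ref{measure} to transport the integral between $S_n$ and $S_2$ (the paper merely runs the identity from the $S_2$ side back to $S_n$, which is the same computation read in reverse). Your additional checks --- that the degenerate set where $h$ is undefined is $\widehat{\bm{\varGamma_X}}$-null, that the image measure is finite and symmetric, hence a genuine spectral measure --- are details the paper leaves implicit (it instead cites Lemma 2.7.5 of \cite{Taqqu1994} for the stability of $\bm Y$), and they are welcome.
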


\begin{thm}
	\label{high_cov}
	Let $\bm{X}=(X_1,\ldots,X_n)$ be an S$\alpha $S random vector with $\alpha\in(0,2]$ and spectral measure $\bm{\varGamma_{X}}$, and let $\bm{Y}=(Y_1,Y_2)$ be defined by $Y_1=\sum_{k=1}^{n}a_kX_k$ and $Y_2=\sum_{k=1}^{n}b_kX_k$. Then for $\beta\ge0$ and $m=0$ or $1$,
$$[Y_1,Y_2]_{\alpha,\beta,m}=\int_{S_n} K_{\alpha,\beta,m}\left(\sum_{k=1}^{n}a_ks_k,\sum_{k=1}^{n}b_ks_k\right)\bm{\varGamma_X}(\ud \bm s),
$$
where $K_{\alpha,\beta,m}$ is defined in (\ref{K}).
\end{thm}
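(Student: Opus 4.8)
The plan is to unwind the definition of the symmetric covariation for $\bm Y$ and then transport the integral from $S_2$ back to $S_n$ by means of the image-measure description of $\bm{\varGamma_Y}$ furnished by Lemma \ref{96}. By Definition \ref{covariation} applied to $\bm Y=(Y_1,Y_2)$ (whose joint stability is guaranteed by Lemma \ref{96}), we have $[Y_1,Y_2]_{\alpha,\beta,m}=\int_{S_2}K_{\alpha,\beta,m}(t_1,t_2)\,\bm{\varGamma_{Y}}(\ud\bm t)$, and Lemma \ref{96} identifies the spectral measure as the image measure $\bm{\varGamma_{Y}}=\widehat{\bm{\varGamma_{X}}}\circ h^{-1}$. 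I would therefore invoke the change-of-variables formula of Lemma \ref{measure} with $g=K_{\alpha,\beta,m}$, $f=h$ and $\mu=\widehat{\bm{\varGamma_{X}}}$, which rewrites the integral over $S_2$ as $\int_{S_n}K_{\alpha,\beta,m}\big(h(\bm s)\big)\,\widehat{\bm{\varGamma_{X}}}(\ud\bm s)$.

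The second ingredient is a homogeneity identity for the kernel: for every $\lambda>0$ and all $u,v\in\mathbb R$, $K_{\alpha,\beta,m}(\lambda u,\lambda v)=\lambda^{\alpha}K_{\alpha,\beta,m}(u,v)$. This is read off directly from (\ref{K}): scaling by $\lambda>0$ leaves both the comparison event $\{|u|\le|v|\}$ and the factor $\mathrm{sign}(uv)$ invariant, while each monomial $|u|^{\beta}|v|^{\alpha-\beta}$ or $|u|^{\alpha-\beta}|v|^{\beta}$ acquires exactly the factor $\lambda^{\beta}\lambda^{\alpha-\beta}=\lambda^{\alpha}$, so the kernel is positively homogeneous of degree $\alpha$. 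Writing $R(\bm s)=\big((\sum_{k}a_ks_k)^2+(\sum_{k}b_ks_k)^2\big)^{1/2}$, so that $h(\bm s)=\big(\sum_{k}a_ks_k,\sum_{k}b_ks_k\big)/R(\bm s)$ and $\widehat{\bm{\varGamma_{X}}}(\ud\bm s)=R(\bm s)^{\alpha}\,\bm{\varGamma_{X}}(\ud\bm s)$, the homogeneity identity with $\lambda=R(\bm s)$ gives $K_{\alpha,\beta,m}\big(h(\bm s)\big)\,R(\bm s)^{\alpha}=K_{\alpha,\beta,m}\big(\sum_{k}a_ks_k,\sum_{k}b_ks_k\big)$ at every point with $R(\bm s)>0$. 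Substituting this into the integral produced in the first step yields precisely the asserted formula.

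The one point demanding care — and the step I expect to be the main, if minor, obstacle — is the set $N=\{\bm s\in S_n:R(\bm s)=0\}$, on which $h$ is not defined and the cancellation above is formally $0/0$. Here I would argue that $N$ carries no $\widehat{\bm{\varGamma_{X}}}$-mass, since $\widehat{\bm{\varGamma_{X}}}(N)=\int_N R(\bm s)^{\alpha}\,\bm{\varGamma_{X}}(\ud\bm s)=0$ because $\alpha>0$; consequently the change-of-variables integral effectively runs over $\{R>0\}$, where $h$ is well defined and the homogeneity cancellation is exact. On $N$ both linear forms $\sum_{k}a_ks_k$ and $\sum_{k}b_ks_k$ vanish, so the right-hand side integrand equals $K_{\alpha,\beta,m}(0,0)$, which is $0$ for the relevant parameters (the degree-$\alpha$ monomial factor vanishes at the origin since $\alpha>0$); thus $N$ contributes nothing on either side and the identity extends to all of $S_n$ in the $\bm{\varGamma_{X}}$-integration sense. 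Assembling the three steps — the definition via $\bm{\varGamma_{Y}}$, the change of variables of Lemma \ref{measure}, and the homogeneity cancellation of the $R(\bm s)^{\alpha}$ factor — completes the proof.
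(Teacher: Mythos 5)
Your proposal is correct and follows essentially the same route as the paper's proof: both rest on the degree-$\alpha$ positive homogeneity of the kernel $K_{\alpha,\beta,m}$, the identification $\bm{\varGamma_Y}=\widehat{\bm{\varGamma_X}}\circ h^{-1}$ from Lemma \ref{96}, and the image-measure change of variables of Lemma \ref{measure}, merely run from the left-hand side toward the right rather than the reverse. Your explicit treatment of the degenerate set $\{R(\bm s)=0\}$, where $h$ is undefined, is a point the paper passes over silently, and it is handled correctly.
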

\begin{proof}
By (\ref{K}) we observe: for $\alpha\in(0,2]$, $\beta\ge0$ and $m\in\{0,1\}$,
\begin{eqnarray}
\label{K1}
&&\int_{S_n} K_{\alpha,\beta,m}\left(\sum_{k=1}^{n}a_ks_k,\sum_{k=1}^{n}b_ks_k\right)\bm{\varGamma_X}(\ud \bm s)\nonumber\\
&&=\int_{\left\{\bm s\in S_n:~\left|\sum_{k=1}^{n}a_ks_k\right|\le\left|\sum_{k=1}^{n}b_ks_k\right|\right\}} \left|\sum_{k=1}^{n}a_ks_k\right|^{\beta}\left|\sum_{k=1}^{n}b_ks_k\right|^{\alpha-\beta}\nonumber\\
&&\hspace{2cm}\times\textnormal{sign}^{m}\left(\left(\sum_{k=1}^{n}a_ks_k\right)\left(\sum_{k=1}^{n}b_ks_k\right)\right)\bm{\varGamma_X}(\ud \bm s)\nonumber\\
&&+\int_{\left\{\bm s\in S_n:~\left|\sum_{k=1}^{n}a_ks_k\right|>\left|\sum_{k=1}^{n}b_ks_k\right|\right\}} \left|\sum_{k=1}^{n}a_ks_k\right|^{\alpha-\beta}\left|\sum_{k=1}^{n}b_ks_k\right|^{\beta}\nonumber\\
&&\hspace{2cm}\times\textnormal{sign}^{m}\left(\left(\sum_{k=1}^{n}a_ks_k\right)\left(\sum_{k=1}^{n}b_ks_k\right)\right)\bm{\varGamma_X}(\ud \bm s).
\end{eqnarray}
Then rewriting (\ref{K1}) and using the notations introduced in Lemma \ref{96}, we have
\small
\begin{eqnarray}
\label{K2}
&&\int_{S_n} K_{\alpha,\beta,m}\left(\sum_{k=1}^{n}a_ks_k,\sum_{k=1}^{n}b_ks_k\right)\bm{\varGamma_X}(\ud \bm s)\nonumber\\
&&=\int_{S_n}K_{\alpha,\beta,m}\left(\frac{\sum\limits_{k=1}^{n}a_ks_k}{\sqrt{\big(\sum\limits_{k=1}^{n}a_ks_k\big)^{2}+\big(\sum\limits_{k=1}^{n}b_ks_k\big)^{2}}},\frac{\sum\limits_{k=1}^{n}b_ks_k}{\sqrt{\big(\sum\limits_{k=1}^{n}a_ks_k\big)^{2}+\big(\sum\limits_{k=1}^{n}b_ks_k\big)^{2}}}\right)\nonumber\\
&&\hspace{4cm}\times\left(\Big(\sum_{k=1}^{n}a_ks_k\Big)^{2}+\Big(\sum_{k=1}^{n}b_ks_k\Big)^{2}\right)^{\alpha/2}\bm{\varGamma_X}(\ud \bm s)\nonumber\\
&&=\int_{S_n} K_{\alpha,\beta,m}\circ h(s_1,\ldots,s_n)\widehat{\bm{\varGamma_X}}(\ud \bm s).
\end{eqnarray}
\normalsize
Using the change of variable $(u_1,u_2)=h(s_1,\ldots,s_n)$ in (\ref{K2}) and applying Lemma \ref{96}, we obtain
\begin{eqnarray*}
&&\int_{S_n} K_{\alpha,\beta,m}\left(\sum_{k=1}^{n}a_ks_k,\sum_{k=1}^{n}b_ks_k\right)\bm{\varGamma_X}(\ud \bm s)\\&=&\int_{S_2} K_{\alpha,\beta,m}\left(u_1,u_2\right)\widehat{\bm{\varGamma_X}} (\ud h^{-1}(\bm u))\\
&=&[Y_1,Y_2]_{\alpha,\beta,m}.
\end{eqnarray*}
\end{proof}

In the following corollaries, we list a number of properties of symmetric covariations, which are mainly derived from the results in Theorem \ref{high_cov}. It turns out that Theorem \ref{high_cov} becomes one of the most important results in this paper, since it summarizes the key features of symmetric covariations.

\begin{cor}[Symmetry]
\label{symmetry}
Let $\bm{X}=(X_1,X_2)$ be an S$\alpha $S random vector with $\alpha\in(0,2]$ and spectral measure $\bm{\varGamma_{X}}$.
	The symmetric covariation is symmetric in its arguments, i.e. for $\beta\ge0$ and $m=0$ or $1$, 
$$
   [X_1,X_2]_{\alpha,\beta,m}=[X_2,X_1]_{\alpha,\beta,m}.
$$
\end{cor}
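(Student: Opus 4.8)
The plan is to deduce the symmetry of the covariation from a single pointwise symmetry of the kernel $K_{\alpha,\beta,m}$ defined in (\ref{K}), after first rewriting $[X_2,X_1]_{\alpha,\beta,m}$ as an integral against the \emph{same} spectral measure $\bm{\varGamma_X}$. To this end I would invoke Theorem \ref{high_cov} in the degenerate linear case $n=2$, $a=(0,1)$ and $b=(1,0)$, so that $Y_1=\sum_k a_kX_k=X_2$ and $Y_2=\sum_k b_kX_k=X_1$. Since $\sum_k a_ks_k=s_2$ and $\sum_k b_ks_k=s_1$, Theorem \ref{high_cov} immediately gives
$$
[X_2,X_1]_{\alpha,\beta,m}=\int_{S_2}K_{\alpha,\beta,m}(s_2,s_1)\,\bm{\varGamma_X}(\ud\bm s).
$$
Comparing this with the defining integral (\ref{representation}) for $[X_1,X_2]_{\alpha,\beta,m}$, the corollary reduces entirely to establishing the pointwise identity $K_{\alpha,\beta,m}(s_2,s_1)=K_{\alpha,\beta,m}(s_1,s_2)$ for all $s_1,s_2\in\mathbb R$.

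To prove this identity I would split $\mathbb R^2$ into the three regions $\{|s_1|<|s_2|\}$, $\{|s_1|>|s_2|\}$ and $\{|s_1|=|s_2|\}$ and compare the two sides on each. The sign factor causes no trouble, because $\textnormal{sign}^m(s_2s_1)=\textnormal{sign}^m(s_1s_2)$. On $\{|s_1|<|s_2|\}$ the first summand of $K_{\alpha,\beta,m}(s_1,s_2)$ is active and equals $|s_1|^{\beta}|s_2|^{\alpha-\beta}\textnormal{sign}^m(s_1s_2)$, while exchanging the arguments makes the \emph{second} summand of $K_{\alpha,\beta,m}(s_2,s_1)$ active and returns the identical monomial; the region $\{|s_1|>|s_2|\}$ is treated symmetrically by swapping the roles of the two summands.

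The only genuinely delicate point --- and the one place the argument could break --- is the boundary $\{|s_1|=|s_2|\}$, because the indicators $\mathds 1_{\{|s_1|\le|s_2|\}}$ and $\mathds 1_{\{|s_1|>|s_2|\}}$ are themselves not invariant under the exchange $s_1\leftrightarrow s_2$ (one is a non-strict and the other a strict inequality). I expect this to be resolved by noting that on the diagonal the two monomials coincide, $|s_1|^{\beta}|s_2|^{\alpha-\beta}=|s_1|^{\alpha-\beta}|s_2|^{\beta}=|s_1|^{\alpha}$, so that $K_{\alpha,\beta,m}$ takes the same value $|s_1|^{\alpha}\textnormal{sign}^m(s_1s_2)$ no matter which summand captures it. Hence the kernel is symmetric everywhere, and substituting $K_{\alpha,\beta,m}(s_2,s_1)=K_{\alpha,\beta,m}(s_1,s_2)$ into the displayed representation of $[X_2,X_1]_{\alpha,\beta,m}$ yields $\int_{S_2}K_{\alpha,\beta,m}(s_1,s_2)\,\bm{\varGamma_X}(\ud\bm s)=[X_1,X_2]_{\alpha,\beta,m}$, as required.
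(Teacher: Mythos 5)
Your proof is correct and follows essentially the same route as the paper: both invoke Theorem \ref{high_cov} (with the coefficient vectors picking out $Y_1=X_2$, $Y_2=X_1$) to express $[X_2,X_1]_{\alpha,\beta,m}$ as $\int_{S_2}K_{\alpha,\beta,m}(s_2,s_1)\,\bm{\varGamma_X}(\ud\bm s)$, and then verify the pointwise symmetry of the kernel by splitting into the regions $\{|s_1|<|s_2|\}$, $\{|s_1|>|s_2|\}$ and the diagonal, where the two monomials coincide. You correctly isolate and resolve the one delicate point (the strict versus non-strict indicators on $\{|s_1|=|s_2|\}$), which the paper handles in exactly the same way by carrying the diagonal as a separate integral.
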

\begin{proof}
	This is a straightforward consequence of the facts that $K_{\alpha,\beta,m}$ is a symmetric bivariation function and ${\bm \Gamma_X}$ is a symmetric spectral measure.
\end{proof}

Unlike the conventional covariation, symmetric covariation shares the symmetric property of  covariance. Unfortunately, it lacks the linearity of covariance. However, it shows the scaling behavior. In particular, the behavior of the symmetric covariation of the scaled variables $aX_1$ and $bX_2$ is characterized by  Corollaries \ref{non-linear1} and  \ref{non-linear2}. 
\begin{cor}[Scaling]
\label{non-linear1}
Let $\bm{X}=(X_1,X_2)$ be an S$\alpha $S random vector with $\alpha\in(0,2]$ and spectral measure $\bm{\varGamma_{X}}$. Let $a$ and $b$ be two real numbers, then for $\beta=\alpha/2$ and $m=0$ or $1$,
\begin{equation*}
[aX_1,bX_2]_{\alpha,\alpha/2,m}=\left|a\right|^{\alpha/2}\left|b\right|^{\alpha/2}\textnormal{sign}^{m}(ab)[X_1,X_2]_{\alpha,\alpha/2,m}.
\end{equation*}
\end{cor}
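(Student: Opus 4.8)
The plan is to derive this directly from Theorem~\ref{high_cov} by viewing $aX_1$ and $bX_2$ as the special weighted sums $Y_1=a X_1+0\cdot X_2$ and $Y_2=0\cdot X_1+b X_2$ of the vector $\bm X=(X_1,X_2)$ (so $n=2$, $a_1=a$, $a_2=0$, $b_1=0$, $b_2=b$). Theorem~\ref{high_cov} then gives immediately
\begin{equation*}
[aX_1,bX_2]_{\alpha,\alpha/2,m}=\int_{S_2}K_{\alpha,\alpha/2,m}(a s_1,b s_2)\,\bm{\varGamma_X}(\ud \bm s),
\end{equation*}
so that the whole corollary reduces to understanding how the kernel $K_{\alpha,\alpha/2,m}$ behaves under the coordinatewise scaling $(s_1,s_2)\mapsto(a s_1,b s_2)$.

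The key observation, and the reason the statement is restricted to $\beta=\alpha/2$, is that at this value the exponents $\beta$ and $\alpha-\beta$ in the definition (\ref{K}) coincide, so the two indicator terms carry identical factors and collapse: for all $u_1,u_2\in\mathbb R$,
\begin{equation*}
K_{\alpha,\alpha/2,m}(u_1,u_2)=|u_1|^{\alpha/2}|u_2|^{\alpha/2}\textnormal{sign}^m(u_1u_2),
\end{equation*}
with no surviving dependence on whether $|u_1|\le|u_2|$ or $|u_1|>|u_2|$. First I would record this simplification.

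Next I would exploit the resulting separate homogeneity in each argument together with the multiplicativity of the sign function, $\textnormal{sign}(xy)=\textnormal{sign}(x)\textnormal{sign}(y)$ (valid including the zero cases, since then both sides vanish). Writing $K_{\alpha,\alpha/2,m}(a s_1,b s_2)=|a|^{\alpha/2}|b|^{\alpha/2}\textnormal{sign}^m(ab)\,|s_1|^{\alpha/2}|s_2|^{\alpha/2}\textnormal{sign}^m(s_1s_2)$ exhibits the kernel as the constant $|a|^{\alpha/2}|b|^{\alpha/2}\textnormal{sign}^m(ab)$ times $K_{\alpha,\alpha/2,m}(s_1,s_2)$. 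Pulling this constant out of the integral above, and recognizing the remaining integral as $[X_1,X_2]_{\alpha,\alpha/2,m}$ via Definition~\ref{covariation}, yields the claim.

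There is essentially no hard analytic step here; the only points requiring care are the collapse of the two indicator regions (which fails for $\beta\neq\alpha/2$, explaining the hypothesis---under a general $\beta$ the scaling by $a,b$ can reverse the inequality between $|as_1|$ and $|bs_2|$ and thereby swap the exponents $\beta$ and $\alpha-\beta$, so no single multiplicative factor can be extracted) and the treatment of the degenerate cases $a=0$, $b=0$, or $s_1s_2=0$, which are absorbed painlessly by the convention $\textnormal{sign}(0)=0$ and the vanishing of the corresponding powers.
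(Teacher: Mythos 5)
Your proposal is correct and follows essentially the same route as the paper's own proof: both apply Theorem \ref{high_cov} with $Y_1=aX_1$, $Y_2=bX_2$, use the fact that at $\beta=\alpha/2$ the two branches of $K_{\alpha,\beta,m}$ coincide so the indicator split is immaterial, and then factor out $|a|^{\alpha/2}|b|^{\alpha/2}\textnormal{sign}^{m}(ab)$ by multiplicativity of the sign. Your added remark on why the argument breaks for $\beta\neq\alpha/2$ is a nice clarification but not a different method.
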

\begin{proof}
Using (\ref{representation}) and Theorem $\ref{high_cov}$, we have
\footnotesize
\begin{eqnarray*}
&&[aX_1,bX_2]_{\alpha,\alpha/2,m}\\&&=\int_{\left\{(s_1, s_2)\in S_2:~\left|as_1\right|\le\left|bs_2\right|\right\}} \left|as_1\right|^{\alpha/2}\left|bs_2\right|^{\alpha/2}\textnormal{sign}^{m}(as_1bs_2)\bm{\varGamma_X}(\ud \bm s)\\
&&\hspace{1cm}+\int_{\left\{(s_1, s_2)\in S_2:~\left|as_1\right|>\left|bs_2\right|\right\}} \left|as_1\right|^{\alpha/2}\left|bs_2\right|^{\alpha/2}\textnormal{sign}^{m}(as_1bs_2)\bm{\varGamma_X}(\ud \bm s)\\
&&=\left|a\right|^{\alpha/2}\left|b\right|^{\alpha/2}\textnormal{sign}^{m}(ab)\left(\int_{\left\{(s_1, s_2)\in S_2:~\left|s_1\right|\le\left|s_2\right|\right\}} \left|s_1\right|^{\alpha/2}\left|s_2\right|^{\alpha/2}\textnormal{sign}^{m}(s_1s_2)\bm{\varGamma_X}(\ud \bm s)\right.\\
&&\hspace{1cm}\left.+\int_{\left\{(s_1, s_2)\in S_2:~\left|s_1\right|>\left|s_2\right|\right\}} \left|s_1\right|^{\alpha/2}\left|s_2\right|^{\alpha/2}\textnormal{sign}^{m}(s_1s_2)\bm{\varGamma_X}(\ud \bm s)\right)\\
&&=\left|a\right|^{\alpha/2}\left|b\right|^{\alpha/2}\textnormal{sign}^{m}(ab)[X_1,X_2]_{\alpha,\alpha/2,m}.
\end{eqnarray*}
\normalsize
\end{proof}

The preceding corollary expresses the mapping between the symmetric covariation of $aX_1$ and $bX_2$ and that of $X_1$ and $X_2$ when $\beta=\alpha/2$. It indicates that the symmetric covariation $[X_1,X_2]_{\alpha,\beta,m}$ becomes linear only when it is degenerate to the covariance function of the Gaussian, i.e., $\alpha=2$, $\beta=1$ and $m=1$. Corollary \ref{non-linear2} below shows that, in general, if one argument of the symmetric covariation is multiplied by a factor, then its value is multiplied by some power of this factor, but at the same time the scale factor of the other argument also changes.
\begin{cor}[Scaling]
\label{non-linear2}
Let $\bm{X}=(X_1,X_2)$ be an S$\alpha $S random vector with $\alpha\in(0,2]$ and spectral measure $\bm{\varGamma_{X}}$. Let $a$ and $b$ be two real numbers with $a\neq0$, then for $\beta\ge0$ and $m=0$ or $1$,
\begin{equation*}
[aX_1,bX_2]_{\alpha,\beta,m}=|a|^\alpha\left[X_1,\frac{b}{a}X_2\right]_{\alpha,\beta,m}.
\end{equation*}
\end{cor}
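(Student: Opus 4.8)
The plan is to apply Theorem \ref{high_cov} to both sides of the claimed identity and then exploit a scaling (homogeneity) property of the kernel $K_{\alpha,\beta,m}$. Taking $n=2$ in Theorem \ref{high_cov} with weights $(a_1,a_2)=(a,0)$ and $(b_1,b_2)=(0,b)$, I would first write
\begin{equation*}
[aX_1,bX_2]_{\alpha,\beta,m}=\int_{S_2}K_{\alpha,\beta,m}(as_1,bs_2)\,\bm{\varGamma_X}(\ud\bm s),
\end{equation*}
and likewise, with weights $(1,0)$ and $(0,b/a)$,
\begin{equation*}
\left[X_1,\frac{b}{a}X_2\right]_{\alpha,\beta,m}=\int_{S_2}K_{\alpha,\beta,m}\!\left(s_1,\frac{b}{a}s_2\right)\bm{\varGamma_X}(\ud\bm s).
\end{equation*}
Thus the corollary reduces to a pointwise identity between the two integrands.

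The key step is to establish that for any nonzero scalar $\lambda$, the kernel defined in (\ref{K}) satisfies
\begin{equation*}
K_{\alpha,\beta,m}(\lambda s_1,\lambda s_2)=|\lambda|^{\alpha}K_{\alpha,\beta,m}(s_1,s_2).
\end{equation*}
To see this I would check the three ingredients of (\ref{K}) separately: since $\lambda\neq0$, the comparison $|\lambda s_1|\le|\lambda s_2|$ is equivalent to $|s_1|\le|s_2|$, so the indicator functions are unchanged; the sign factor is invariant because $\textnormal{sign}(\lambda s_1\cdot\lambda s_2)=\textnormal{sign}(\lambda^2 s_1s_2)=\textnormal{sign}(s_1s_2)$; and on each branch the two power factors combine to give a total homogeneity of degree $|\lambda|^{\beta}|\lambda|^{\alpha-\beta}=|\lambda|^{\alpha}$.

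With this homogeneity in hand I would rewrite the arguments of the first integrand so that both coordinates are scaled by the same factor $a$, namely $(as_1,bs_2)=a\,(s_1,\frac{b}{a}s_2)$, which is permissible precisely because $a\neq0$. Applying the scaling identity with $\lambda=a$ then gives $K_{\alpha,\beta,m}(as_1,bs_2)=|a|^{\alpha}K_{\alpha,\beta,m}(s_1,\frac{b}{a}s_2)$ for every $\bm s\in S_2$. Substituting this into the first integral and pulling the constant $|a|^{\alpha}$ outside yields exactly $|a|^{\alpha}[X_1,\frac{b}{a}X_2]_{\alpha,\beta,m}$, which completes the argument.

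The only subtle point — and the one I would be careful about — is that the kernel is homogeneous only when \emph{both} of its arguments are multiplied by the same scalar; multiplying the two coordinates by the different factors $a$ and $b$ does not factor cleanly on its own. The hypothesis $a\neq0$ is exactly what lets me absorb the mismatch between $a$ and $b$ into the second coordinate as $b/a$ and thereby reduce to a single common scaling factor.
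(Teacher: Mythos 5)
Your proposal is correct and follows essentially the same route as the paper: both apply Theorem \ref{high_cov} to write $[aX_1,bX_2]_{\alpha,\beta,m}$ as $\int_{S_2}K_{\alpha,\beta,m}(as_1,bs_2)\,\bm{\varGamma_X}(\ud\bm s)$ and then factor $|a|^\alpha$ out of the kernel, the only difference being that you isolate the identity $K_{\alpha,\beta,m}(\lambda s_1,\lambda s_2)=|\lambda|^{\alpha}K_{\alpha,\beta,m}(s_1,s_2)$ as an explicit homogeneity step while the paper performs the same manipulation inline on the indicators, powers, and sign factor.
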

\begin{proof}
Using Theorem $\ref{high_cov}$ and the fact that $a\neq 0$, we have
\begin{eqnarray*}
&&[aX_1,bX_2]_{\alpha,\beta,m}\\&&=|a|^\alpha\int_{\left\{(s_1, s_2)\in S_2:~\left|s_1\right|\le\left|\frac{bs_2}{a}\right|\right\}} \left|s_1\right|^{\beta}\left|\frac{bs_2}{a}\right|^{\alpha-\beta}\textnormal{sign}^{m}(as_1bs_2)\bm{\varGamma_X}(\ud \bm s)\\
&&\hspace{1cm}+|a|^\alpha\int_{\left\{(s_1, s_2)\in S_2:~\left|s_1\right|>\left|\frac{bs_2}{a}\right|\right\}} \left|s_1\right|^{\alpha-\beta}\left|\frac{bs_2}{a}\right|^{\beta}\textnormal{sign}^{m}(as_1bs_2)\bm{\varGamma_X}(\ud \bm s)\\
&&=|a|^\alpha\left[X_1,\frac{b}{a}X_2\right]_{\alpha,\beta,m}.
\end{eqnarray*}
\end{proof}

The following result states that  symmetric covariation is an even function of each of its arguments, when $m=0$; it becomes an odd function of each of its arguments when $m=1$.
\begin{cor}[Scaled by sign]
Let $\bm{X}=(X_1,X_2)$ be an S$\alpha $S random vector with $\alpha\in(0,2]$ and spectral measure $\bm{\varGamma_{X}}$. For $\beta\ge0$ and $m=0$ or $1$,
\begin{equation*}
[X_1,-X_2]_{\alpha,\beta,m}=(-1)^m[X_1,X_2]_{\alpha,\beta,m}.
\end{equation*}
\end{cor}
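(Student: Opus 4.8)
The plan is to apply Theorem~\ref{high_cov} to the pair $(Y_1,Y_2)=(X_1,-X_2)$ and then exploit the parity of the kernel $K_{\alpha,\beta,m}$ under a sign change of its second argument. First I would take $n=2$ in Theorem~\ref{high_cov} with the coefficient vectors $a=(1,0)$ and $b=(0,-1)$, so that $Y_1=\sum_{k=1}^{2}a_kX_k=X_1$ and $Y_2=\sum_{k=1}^{2}b_kX_k=-X_2$. Since here $S_n=S_2$ and $\sum_k a_k s_k=s_1$, $\sum_k b_k s_k=-s_2$, Theorem~\ref{high_cov} immediately yields
\[
[X_1,-X_2]_{\alpha,\beta,m}=\int_{S_2}K_{\alpha,\beta,m}(s_1,-s_2)\,\bm{\varGamma_X}(\ud\bm s).
\]

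Next I would establish the pointwise identity $K_{\alpha,\beta,m}(s_1,-s_2)=(-1)^m K_{\alpha,\beta,m}(s_1,s_2)$ directly from definition~(\ref{K}). Replacing $s_2$ by $-s_2$ leaves $|s_2|$ unchanged, so the power factors $|s_2|^{\alpha-\beta}$ and $|s_2|^{\beta}$ are untouched, and the two indicator regions $\{|s_1|\le|s_2|\}$ and $\{|s_1|>|s_2|\}$ are likewise invariant under $s_2\mapsto-s_2$. The only term that reacts to the sign change is $\textnormal{sign}^m(s_1s_2)$: because $\textnormal{sign}$ is odd we have $\textnormal{sign}(s_1(-s_2))=-\textnormal{sign}(s_1s_2)$, hence $\textnormal{sign}^m(s_1(-s_2))=(-1)^m\textnormal{sign}^m(s_1s_2)$. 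This factor $(-1)^m$ multiplies both summands of $K_{\alpha,\beta,m}$ uniformly, which is exactly the claimed parity.

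Finally I would substitute this identity into the integral and pull the constant $(-1)^m$ out of the integral sign, obtaining
\[
[X_1,-X_2]_{\alpha,\beta,m}=(-1)^m\int_{S_2}K_{\alpha,\beta,m}(s_1,s_2)\,\bm{\varGamma_X}(\ud\bm s)=(-1)^m[X_1,X_2]_{\alpha,\beta,m},
\]
the last equality being Definition~\ref{covariation} together with representation~(\ref{representation}). There is no genuine analytic obstacle in this argument; the only point demanding care is verifying that the negation of $s_2$ affects precisely one factor of the kernel (the signed-power factor) while leaving the absolute values and the partition of $S_2$ into the two integration regions invariant, so that the parity of $\textnormal{sign}^m$ alone produces the global factor $(-1)^m$.
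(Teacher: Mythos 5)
Your proposal is correct and follows essentially the same route as the paper: both apply Theorem~\ref{high_cov} with coefficients $(1,0)$ and $(0,-1)$ to express $[X_1,-X_2]_{\alpha,\beta,m}$ as the integral of $K_{\alpha,\beta,m}(s_1,-s_2)$ against $\bm{\varGamma_X}$, and then extract the factor $(-1)^m$ from $\textnormal{sign}^m(-s_1s_2)$. Your explicit verification that the absolute-value factors and the two indicator regions are invariant under $s_2\mapsto-s_2$ is a slightly more careful writeup of the step the paper leaves implicit, but the argument is identical.
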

\begin{proof}
Using Theorem $\ref{high_cov}$, we have
\begin{eqnarray*}
&&[X_1,-X_2]_{\alpha,\beta,m}\\&&=\int_{\left\{(s_1, s_2)\in S_2:~\left|s_1\right|\le\left|s_2\right|\right\}} \left|s_1\right|^{\beta}\left|s_2\right|^{\alpha-\beta}\textnormal{sign}^{m}(-s_1s_2)\bm{\varGamma_X}(\ud \bm s)\\
&&\hspace{2cm}+\int_{\left\{(s_1, s_2)\in S_2:~\left|s_1\right|>\left|s_2\right|\right\}} \left|s_1\right|^{\alpha-\beta}\left|s_2\right|^{\beta}\textnormal{sign}^{m}(-s_1s_2)\bm{\varGamma_X}(\ud \bm s)\\
&&=(-1)^m[X_1,X_2]_{\alpha,\beta,m}.
\end{eqnarray*}
\end{proof}

We are now in position to elaborate the exact relationship between  symmetric covariation and  covariation. The following proposition shows that the covariation  $[X_1,X_2]_\alpha$ can be written in terms of $[a_1X_1,b_1X_2]_{\alpha,1,1}$ and $[a_2X_1,b_2X_2]_{\alpha,\alpha-1,1}$ for some $a_1,b_1,a_2,b_2$. The proof is given in ``Appendix" (Section \ref{subsection:proof_of_prop}).
\begin{prop}
\label{prop:equiv} 
Assume $\alpha\in(1,2]$ and let $\bm{X}=(X_1,X_2)$ be an S$\alpha $S random vector.
\begin{description}
\item[(i)] For any $a,b\in\mathbb R$, 
 \begin{eqnarray}
    \label{result_1}
    &&[aX_1,bX_2]_{\alpha,1,1}+[aX_1,bX_2]_{\alpha,\alpha-1,1}\nonumber\\&&=ab^{\langle\alpha-1\rangle}[X_1,X_2]_{\alpha}+ba^{\langle\alpha-1\rangle}[X_2,X_1]_{\alpha}.
    \end{eqnarray}
\item[(ii)] For any $a_1,b_1,a_2,b_2\in\mathbb R$ such that $a_1b_1^{\langle\alpha-1\rangle}b_2a_2^{\langle\alpha-1\rangle}\ne a_2b_2^{\langle\alpha-1\rangle}b_1a_1^{\langle\alpha-1\rangle}$, 
\begin{equation}
\label{result_2}
\left\{\begin{array}{ll}
     &[X_1,X_2]_{\alpha}=\frac{c_1b_2a_2^{\langle\alpha-1\rangle}-c_2b_1a_1^{\langle\alpha-1\rangle}}{a_1b_1^{\langle\alpha-1\rangle}b_2a_2^{\langle\alpha-1\rangle}-a_2b_2^{\langle\alpha-1\rangle}b_1a_1^{\langle\alpha-1\rangle}}, \\
     &\\
     &[X_2,X_1]_{\alpha}=\frac{c_2a_1b_1^{\langle\alpha-1\rangle}-c_1a_2b_2^{\langle\alpha-1\rangle}}{a_1b_1^{\langle\alpha-1\rangle}b_2a_2^{\langle\alpha-1\rangle}-a_2b_2^{\langle\alpha-1\rangle}b_1a_1^{\langle\alpha-1\rangle}}, 
\end{array}\right.
\end{equation}
where for $j=1,2$, $c_j=[a_jX_1,b_jX_2]_{\alpha,1,1}+[a_jX_1,b_jX_2]_{\alpha,\alpha-1,1}$.
\end{description}
\end{prop}

Corollary \ref{high_low} below shows that the symmetric covariation of a single-valued S$\alpha$S random variable $X_1$ and itself is consistent with that of the conventional covariation.
\begin{cor}
\label{high_low}
Under the conditions of Theorem $\ref{high_cov}$, we have
\begin{equation}	\label{high_cov_sepc2}
[X_1,X_1]_{\alpha,\beta,m}=\int_{S_2}|s_1|^{\alpha}\bm{\varGamma_X}(\ud \bm s)=\sigma_{X_1}^{\alpha}(1),~\mbox{for all $\beta\ge0$ and $m=0,1$},
\end{equation}
where $\sigma_{X_1}$ is the scale parameter of the S$\alpha $S  random variable $X_1$. This result extends the conventional covariation $[X_1,X_1]_\alpha$.
\end{cor}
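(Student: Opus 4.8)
The plan is to derive this identity as a direct specialization of Theorem \ref{high_cov}, choosing the two linear combinations so that both coordinates coincide with $X_1$. Concretely, I would apply Theorem \ref{high_cov} with $Y_1=X_1$ and $Y_2=X_1$, i.e. with the coefficient vectors $(a_1,a_2)=(1,0)$ and $(b_1,b_2)=(1,0)$. Then $\sum_k a_ks_k=\sum_k b_ks_k=s_1$, so Theorem \ref{high_cov} reduces $[X_1,X_1]_{\alpha,\beta,m}$ to the single integral $\int_{S_2}K_{\alpha,\beta,m}(s_1,s_1)\,\bm{\varGamma_X}(\ud\bm s)$, with the entire dependence on $\beta$ and $m$ now confined to the evaluation of the kernel on the diagonal.

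Next I would evaluate $K_{\alpha,\beta,m}(s_1,s_1)$ from the explicit form (\ref{K}). Since $|s_1|\le|s_1|$ holds for every $s_1$, the indicator $\mathds 1_{\{|s_1|\le|s_1|\}}$ equals $1$ while $\mathds 1_{\{|s_1|>|s_1|\}}$ equals $0$; hence only the first term survives and gives $|s_1|^{\beta}|s_1|^{\alpha-\beta}\textnormal{sign}^m(s_1^2)=|s_1|^{\alpha}\textnormal{sign}^m(s_1^2)$. The key observation is that $s_1^2\ge0$: for $s_1\ne0$ one has $\textnormal{sign}(s_1^2)=1$, so the sign factor is trivial for both $m=0$ and $m=1$, whereas for $s_1=0$ the prefactor $|s_1|^{\alpha}$ already vanishes because $\alpha>0$. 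Thus $K_{\alpha,\beta,m}(s_1,s_1)=|s_1|^{\alpha}$ identically, and in particular the value is independent of $\beta$ and $m$, which is exactly the distinctive feature of a covariation norm.

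Finally I would identify the resulting integral with the scale parameter. Substituting the kernel value yields $[X_1,X_1]_{\alpha,\beta,m}=\int_{S_2}|s_1|^{\alpha}\,\bm{\varGamma_X}(\ud\bm s)$; comparing with (\ref{scaling}) at $\bm\theta=(1,0)$, for which $\langle\bm\theta,\bm s\rangle=s_1$, shows that this equals $\sigma_{X_1}^{\alpha}(1)$, the $\alpha$-th power of the scale parameter of the univariate S$\alpha$S variable $X_1$. To confirm the closing claim that this extends the conventional covariation norm, I would note that $[X_1,X_1]_{\alpha}=\int_{S_2}s_1 s_1^{\langle\alpha-1\rangle}\,\bm{\varGamma_X}(\ud\bm s)=\int_{S_2}|s_1|^{\alpha}\,\bm{\varGamma_X}(\ud\bm s)$, using $s_1\,\textnormal{sign}(s_1)=|s_1|$, so the two notions agree. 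I expect no serious obstacle in this argument; the only point demanding a little care is the treatment of the sign factor and of the degenerate point $s_1=0$ in the diagonal evaluation, and it is precisely this collapse that forces the dependence on $\beta$ and $m$ to disappear.
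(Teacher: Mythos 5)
Your proposal is correct and follows exactly the paper's own route: the paper proves the corollary by taking $a_1=b_1=1$ and all other coefficients zero in Theorem \ref{high_cov}, which is precisely your specialization. You merely spell out the diagonal evaluation $K_{\alpha,\beta,m}(s_1,s_1)=|s_1|^{\alpha}$ (including the sign factor and the point $s_1=0$) and the comparison with the conventional covariation, details the paper leaves implicit.
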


$(\ref{high_cov_sepc2})$ follows directly from Theorem $\ref{high_cov}$, by plugging in $a_1=b_1=1$ and  $a_i=b_j=0$ for $i>1, j>1$. The result of the preceding corollary indicates that $[X_1,X_1]_{\alpha,\beta,m}$ in fact does not depend on $\beta$ and $m$. Hence we define symmetric covariation norm as below. It implies that the symmetric covariation norm is equal to the scale parameter and consistent with the covariation norm associated with convariation.
\begin{definition}
	The symmetric covariation norm is defined as $\|X\|_{\alpha}=[X,X]_{\alpha,0,0}^{1/\alpha}=\sigma_X(1)$.
\end{definition}

Next we establish a fine inequality which extends the Cauchy-Schwarz inequality for the covariances in the Gaussian cases.
\begin{thm}
\label{thm:Holder}
	Let $\bm{X}=(X_1,X_2)$ be an S$\alpha $S random vector with $\alpha\in(0,2]$ and spectral measure $\bm{\varGamma_{X}}$. Let $\alpha\in(0,2]$ and $\beta\ge \alpha/2$. Then for $m=0$ or $1$,
\begin{equation}
\label{major_inequality}
\left|[X_1, X_2]_{\alpha,\beta,m}\right|\le\min\left\{\|X_1\|_\alpha^{\alpha\wedge\beta}\|X_2\|_\alpha^{\alpha-(\alpha\wedge\beta)},\|X_2\|_\alpha^{\alpha\wedge\beta}\|X_1\|_\alpha^{\alpha-(\alpha\wedge\beta)}\right\},
\end{equation}
where $\alpha\wedge\beta=\min\{\alpha,\beta\}$. Moreover, for $\beta\in[\alpha/2,\alpha]$, the equality holds if and only if $X_1=\lambda X_2$ with some $\lambda\in\mathbb{R}$.
\end{thm}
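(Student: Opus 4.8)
The plan is to reduce the signed quantity to its unsigned ($m=0$) counterpart, bound the kernel pointwise using the hypothesis $\beta\ge\alpha/2$, and then apply Hölder's inequality. First, since $|\mathrm{sign}^m(\cdot)|\le1$ we have $|K_{\alpha,\beta,m}|=K_{\alpha,\beta,0}$, so the triangle inequality for integrals applied to Definition \ref{covariation} gives $|[X_1,X_2]_{\alpha,\beta,m}|\le\int_{S_2}K_{\alpha,\beta,0}(s_1,s_2)\,\bm{\varGamma_X}(\ud\bm s)=[X_1,X_2]_{\alpha,\beta,0}$, and it remains to bound the nonnegative kernel $K_{\alpha,\beta,0}$. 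The key pointwise estimate is $K_{\alpha,\beta,0}(s_1,s_2)\le|s_1|^{\alpha\wedge\beta}|s_2|^{\alpha-(\alpha\wedge\beta)}$ together with its mirror image obtained by exchanging $s_1$ and $s_2$. Indeed, on $\{|s_1|\le|s_2|\}$ the kernel equals $|s_1|^{\beta}|s_2|^{\alpha-\beta}$ exactly, while on the complement it equals $|s_1|^{\alpha-\beta}|s_2|^{\beta}$, and there the desired inequality reduces to $|s_2|^{2\beta-\alpha}\le|s_1|^{2\beta-\alpha}$, which holds because $2\beta-\alpha\ge0$ and $t\mapsto t^{2\beta-\alpha}$ is nondecreasing on $[0,\infty)$. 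When $\beta>\alpha$ one reads $\alpha\wedge\beta=\alpha$ and the same computation yields $K_{\alpha,\beta,0}\le\min\{|s_1|^{\alpha},|s_2|^{\alpha}\}$.

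Next I would integrate this pointwise bound and apply Hölder's inequality with conjugate exponents $p=\alpha/(\alpha\wedge\beta)$ and $q=\alpha/(\alpha-(\alpha\wedge\beta))$, invoking $\int_{S_2}|s_1|^{\alpha}\,\bm{\varGamma_X}(\ud\bm s)=\|X_1\|_\alpha^{\alpha}$ and the analogue for $s_2$ from Corollary \ref{high_low}. This gives $[X_1,X_2]_{\alpha,\beta,0}\le\|X_1\|_\alpha^{\alpha\wedge\beta}\|X_2\|_\alpha^{\alpha-(\alpha\wedge\beta)}$, and the mirror estimate gives the same bound with $X_1$ and $X_2$ interchanged; taking the minimum proves (\ref{major_inequality}). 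The degenerate endpoint $\alpha\wedge\beta=\alpha$ (i.e. $\beta\ge\alpha$), where $q=\infty$, I would handle directly from $\int\min\{|s_1|^{\alpha},|s_2|^{\alpha}\}\,\bm{\varGamma_X}(\ud\bm s)\le\min\{\|X_1\|_\alpha^{\alpha},\|X_2\|_\alpha^{\alpha}\}$ rather than through Hölder.

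For the equality statement with $\beta\in[\alpha/2,\alpha]$, the \emph{if} direction is a direct computation: when $X_1=\lambda X_2$ the spectral measure of $(X_1,X_2)$ is carried by the antipodal pair $\pm(\lambda,1)/\sqrt{1+\lambda^2}$, and substituting this into Definition \ref{covariation} on both sides, using $\|X_1\|_\alpha=|\lambda|\,\|X_2\|_\alpha$, turns (\ref{major_inequality}) into an identity.

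The \emph{only if} direction is where I expect the main difficulty, and I would prove it by tracking equality through each link of the chain $|[X_1,X_2]_{\alpha,\beta,m}|\le[X_1,X_2]_{\alpha,\beta,0}\le\int_{S_2}|s_1|^{\beta}|s_2|^{\alpha-\beta}\bm{\varGamma_X}(\ud\bm s)\le\|X_1\|_\alpha^{\beta}\|X_2\|_\alpha^{\alpha-\beta}$, assuming without loss of generality $\|X_1\|_\alpha\le\|X_2\|_\alpha$ so that the minimum is the first term. Equality in the Hölder step forces $|s_1|^{\alpha}$ and $|s_2|^{\alpha}$ to be proportional $\bm{\varGamma_X}$-almost everywhere, i.e. $|s_1|=c|s_2|$ for a constant $c$, so $\bm{\varGamma_X}$ concentrates on the four points $(\pm c,\pm1)/\sqrt{1+c^2}$; equality in the pointwise step forces $c\le1$; and equality in the absolute-value step forces $\mathrm{sign}(s_1s_2)$ to be constant on the support, which collapses the four points to a single antipodal pair and hence yields $X_1=\lambda X_2$. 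The delicate points that will require the most care are that this rigidity comes entirely from the strict convexity underlying Hölder's inequality, so the argument is cleanest for $\beta\in(\alpha/2,\alpha)$ where both conjugate exponents are finite, and that the sign-collapsing step genuinely relies on the signed kernel $m=1$; the unsigned case $m=0$ and the degenerate endpoint $\beta=\alpha$ lose this rigidity and must be scrutinized separately.
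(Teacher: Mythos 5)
Your argument for the inequality (\ref{major_inequality}) is essentially the paper's own: reduce to $m=0$ by the triangle inequality, dominate the kernel pointwise using $2\beta\ge\alpha$ (the paper pushes both pieces onto $|s_1|^{\alpha-(\alpha\wedge\beta)}|s_2|^{\alpha\wedge\beta}$, you keep the piece on $\{|s_1|\le|s_2|\}$ exact and push the other one — same maneuver), then apply H\"older with exponents $\alpha/(\alpha\wedge\beta)$ and $\alpha/(\alpha-(\alpha\wedge\beta))$, treating $\beta\ge\alpha$ directly. That part is correct and needs no further comment, as is your ``if'' direction of the equality claim.

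The ``only if'' direction is where your proposal stops short, and you have put your finger on exactly the right spot. Tracking equality through the chain, H\"older rigidity only yields $|s_1|=c|s_2|$ $\bm{\varGamma_X}$-a.e., i.e.\ concentration on the four points $(\pm c,\pm 1)/\sqrt{1+c^2}$; to collapse to an antipodal pair you need the sign factor, which exists only when $m=1$. You flag that the $m=0$ case ``must be scrutinized separately,'' but it cannot in fact be repaired: take $\alpha=1$, $\beta=3/4$, and $\bm{\varGamma_X}$ putting mass $1/4$ at each of $(\pm1,\pm1)/\sqrt2$. This is a legitimate symmetric spectral measure, $X_1\ne\lambda X_2$ (the scale function is $\frac{1}{2\sqrt2}(|\theta_1+\theta_2|+|\theta_1-\theta_2|)$, not of rank one), yet $[X_1,X_2]_{1,3/4,0}=1/\sqrt2=\min\{\|X_1\|_1^{3/4}\|X_2\|_1^{1/4},\dots\}$, so equality holds in (\ref{major_inequality}) with $m=0$. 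The theorem's equality characterization is therefore false as stated for $m=0$. Be aware that the paper's own proof commits precisely the jump you were worried about: it passes from ``one of the two H\"older inequalities is an equality'' directly to ``$X_1=\lambda X_2$'' with no sign argument, so your hesitation identifies a genuine defect in the published argument rather than a defect in your own. To salvage the statement one must either restrict the equality claim to $m=1$ (where the triangle-inequality step forces $\textnormal{sign}(s_1s_2)$ to be a.e.\ constant on the support and the four points do collapse to an antipodal pair), or weaken the conclusion for $m=0$ to ``$|s_1|=c|s_2|$ on the support of $\bm{\varGamma_X}$.'' The endpoint $\beta=\alpha$ you also flag is benign for the inequality but degenerate for rigidity, since the H\"older step becomes an identity and all the information sits in the pointwise step.
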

\begin{proof}
By Definition $\ref{covariation}$ and the triangle inequality, we can write
\begin{eqnarray}
\label{triangle1}
&&\left|[X_1, X_2]_{\alpha,\beta,m}\right|=\left|\int_{\{(s_1,s_2)\in S_2:~|s_1|\le| s_2|\}} |s_1|^{\beta}| s_2|^{\alpha-\beta}\textnormal{sign}^{m}(s_1 s_2)\bm{\varGamma_X}(\ud \bm s)\right.\nonumber\\
&&\hspace{2cm}\left.+\int_{\{(s_1,s_2)\in S_2:~|s_1|>|s_2|\}} |s_1|^{\alpha-\beta}| s_2|^{\beta}\textnormal{sign}^{m}(s_1 s_2)\bm{\varGamma_X}(\ud \bm s)\right|\nonumber\\
&&\le\int_{\{(s_1,s_2)\in S_2:~|s_1|\le| s_2|\}} |s_1|^{\beta}| s_2|^{\alpha-\beta}\bm{\varGamma_X}(\ud \bm s)\nonumber\\
&&\hspace{2cm}+\int_{\{(s_1,s_2)\in S_2:~|s_1|>| s_2|\}} |s_1|^{\alpha-\beta}| s_2|^{\beta}\bm{\varGamma_X}(\ud \bm s).
\end{eqnarray}
Because of the assumption $2\beta\ge\alpha$, we have $\alpha-\beta-(\alpha\wedge\beta)\le0$. Then, the following inequality holds.
\begin{eqnarray}
\label{ineq1}
&&\int_{\{(s_1,s_2)\in S_2:~|s_1|\le| s_2|\}} |s_1|^{\beta}| s_2|^{\alpha-\beta}\bm{\varGamma_X}(\ud \bm s)\nonumber\\
&&=\int_{\{(s_1,s_2)\in S_2:~|s_1|\le| s_2|\}} |s_1|^{\beta}|s_2|^{\alpha\wedge\beta}| s_2|^{\alpha-\beta-(\alpha\wedge\beta)}\bm{\varGamma_X}(\ud \bm s)\nonumber\\
&&\le\int_{\{(s_1,s_2)\in S_2:~|s_1|\le| s_2|\}} |s_1|^{\beta}|s_2|^{\alpha\wedge\beta}| s_1|^{\alpha-\beta-(\alpha\wedge\beta)}\bm{\varGamma_X}(\ud \bm s)\nonumber\\
&&=\int_{\{(s_1,s_2)\in S_2:~|s_1|\le| s_2|\}} |s_1|^{\alpha-(\alpha\wedge\beta)}|s_2|^{\alpha\wedge\beta}\bm{\varGamma_X}(\ud \bm s)<+\infty.
\end{eqnarray}
Note that the right-hand side integral in (\ref{ineq1}) is well defined due to the fact that $\alpha-(\alpha\wedge\beta)\ge0$. Next using the fact that $\beta-(\alpha\wedge\beta)\ge0$, we obtain
\begin{eqnarray}
\label{ineq2}
&&\int_{\{(s_1,s_2)\in S_2:~|s_1|>| s_2|\}} |s_1|^{\alpha-\beta}| s_2|^{\beta}\bm{\varGamma_X}(\ud \bm s)\nonumber\\
&&=\int_{\{(s_1,s_2)\in S_2:~|s_1|>| s_2|\}} |s_1|^{\alpha-(\alpha\wedge\beta)}|s_2|^{\alpha\wedge\beta}\left(\frac{| s_2|}{|s_1|}\right)^{\beta-(\alpha\wedge\beta)}\bm{\varGamma_X}(\ud \bm s)\nonumber\\
&&\le\int_{\{(s_1,s_2)\in S_2:~|s_1|>| s_2|\}} |s_1|^{\alpha-(\alpha\wedge\beta)}|s_2|^{\alpha\wedge\beta}\bm{\varGamma_X}(\ud \bm s)<+\infty.
\end{eqnarray}
Combining (\ref{triangle1}), (\ref{ineq1}) and (\ref{ineq2}), we obtain
\begin{eqnarray}
\label{bound_1}
&&\left|[X_1, X_2]_{\alpha,\beta,m}\right|\le\int_{\{(s_1,s_2)\in S_2:~|s_1|\le| s_2|\}} |s_1|^{\beta}| s_2|^{\alpha-\beta}\bm{\varGamma_X}(\ud \bm s)\nonumber\\
&&\hspace{2cm}+\int_{\{(s_1,s_2)\in S_2:~|s_1|>| s_2|\}} | s_1|^{\alpha-\beta}| s_2|^{\beta}\bm{\varGamma_X}(\ud \bm s)\nonumber\\
&&\le\int_{\{(s_1,s_2)\in S_2:~|s_1|\le| s_2|\}} |s_1|^{\alpha-(\alpha\wedge\beta)}|s_2|^{\alpha\wedge\beta}\bm{\varGamma_X}(\ud \bm s)\nonumber\\
&&\hspace{2cm}+\int_{\{(s_1,s_2)\in S_2:~|s_1|>| s_2|\}} |s_1|^{\alpha-(\alpha\wedge\beta)}|s_2|^{\alpha\wedge\beta}\bm{\varGamma_X}(\ud \bm s)\nonumber\\
&&=\int_{ S_2} |s_1|^{\alpha-(\alpha\wedge\beta)}|s_2|^{\alpha\wedge\beta}\bm{\varGamma_X}(\ud \bm s)<+\infty.
\end{eqnarray}
By the H\"older inequality, we can then bound the right-hand side of (\ref{bound_1}) by:
\begin{itemize}
\item If $\beta\ge\alpha$, 
\begin{equation}
\label{Holder1}
\int_{S_2} |s_1|^{\alpha-(\alpha\wedge\beta)}|s_2|^{\alpha\wedge\beta}\bm{\varGamma_X}(\ud s)=\int_{S_2} |s_2|^{\alpha}\bm{\varGamma_X}(\ud s)=\|X_2\|_{\alpha}^{\alpha}.
	\end{equation}
 \item If $\beta<\alpha$,
\begin{eqnarray}
\label{Holder2}
&&\int_{S_2} |s_1|^{\alpha-(\alpha\wedge\beta)}|s_2|^{\alpha\wedge\beta}\bm{\varGamma_X}(\ud s)=\int_{S_2} |s_1|^{\alpha-\beta}|s_2|^{\beta}\bm{\varGamma_X}(\ud s)\nonumber\\
&&\le\left(\int_{S_2} \left(|s_1|^{\alpha-\beta}\right)^{\frac{\alpha}{\alpha-\beta}}\bm{\varGamma_X}(\ud s)\right)^{1-\frac{\beta}{\alpha}}\left(\int_{S_2} \left(|s_2|^{\beta}\right)^{\frac{\alpha}{\beta}}\bm{\varGamma_X}(\ud s)\right)^{\frac{\beta}{\alpha}}\nonumber\\
 &&= \left(\int_{S_2} |s_1|^{\alpha}\bm{\varGamma_X}(\ud s)\right)^{1-\frac{\beta}{\alpha}}\left(\int_{S_2} |s_2|^{\alpha}\bm{\varGamma_X}(\ud s)\right)^{\frac{\beta}{\alpha}}\nonumber\\
 &&=\|X_1\|_{\alpha}^{\alpha-\beta}\|X_2\|_{\alpha}^{\beta}.
	\end{eqnarray}
    \end{itemize}
Hence, it follows from (\ref{bound_1}), (\ref{Holder1}) and (\ref{Holder2}) that
\begin{equation}
\label{inequality0}
\left|[X_1, X_2]_{\alpha,\beta,m}\right|\le\|X_1\|_{\alpha}^{\alpha-(\alpha\wedge\beta)}\|X_2\|_{\alpha}^{\alpha\wedge\beta}.
\end{equation}
Using the fact that $[X_1, X_2]_{\alpha,\beta,m}=[X_2, X_1]_{\alpha,\beta,m}$ and switching $X_1,X_2$ in (\ref{inequality0}), we obtain 
\begin{equation}
\label{inequality1}
\left|[X_1, X_2]_{\alpha,\beta,m}\right|=\left|[X_2, X_1]_{\alpha,\beta,m}\right|\le\|X_2\|_{\alpha}^{\alpha-(\alpha\wedge\beta)}\|X_1\|_{\alpha}^{\alpha\wedge\beta}.
\end{equation}
The inequality (\ref{major_inequality}) then follows from (\ref{inequality0}) and (\ref{inequality1}).

It remains to show that, when $\beta\in[\alpha/2,\alpha]$, the following equality holds if and only if $X_1=\lambda X_2$ for some $\lambda\in\mathbb R$. 
\begin{equation}
\label{equality}
\left|[X_1, X_2]_{\alpha,\beta,m}\right|=\min\left\{\|X_1\|_{\alpha}^{\alpha-\beta}\|X_2\|_{\alpha}^{\beta}, \|X_1\|_{\alpha}^{\beta}\|X_2\|_{\alpha}^{\alpha-\beta}\right\}.
\end{equation}

\begin{itemize}
\item If $X_1=\lambda X_2$ with $\lambda=0$, (\ref{equality}) holds obviously. If $X_1=\lambda X_2$ for some $\lambda\ne0$, then by the definition, we have that
\begin{eqnarray}
\label{equality1}
&&[\lambda X_2, X_2]_{\alpha,\beta,m}\nonumber\\&&=\int_{\{(s_1,s_2)\in S_2:~|\lambda s_2|\le| s_2|\}} |\lambda s_2|^{\beta}| s_2|^{\alpha-\beta}\textnormal{sign}^{m}(\lambda s_2 s_2)\bm{\varGamma_X}(\ud \bm s)\nonumber\\
&&\hspace{1cm}+\int_{\{(s_1,s_2)\in S_2:~|\lambda s_2|>| s_2|\}} |\lambda s_2|^{\alpha-\beta}| s_2|^{\beta}\textnormal{sign}^{m}(\lambda s_2 s_2)\bm{\varGamma_X}(\ud \bm s)\nonumber\\
&&=\int_{\{(s_1,s_2)\in S_2:~|\lambda|\le1\}} |s_2|^{\alpha}|\lambda|^{\beta}\textnormal{sign}^{m}(\lambda)\bm{\varGamma_X}(\ud \bm s)\nonumber\\
&&\hspace{1cm}+\int_{\{(s_1,s_2)\in S_2:~|\lambda|>1\}} |s_2|^{\alpha}|\lambda|^{\alpha-\beta}\textnormal{sign}^{m}(\lambda)\bm{\varGamma_X}(\ud \bm s)\nonumber\\
&&=\mathds 1_{\{|\lambda|\le1\}}|\lambda|^{\beta}\textnormal{sign}^{m}(\lambda)\int_{S_2} |s_2|^{\alpha}\bm{\varGamma_X}(\ud \bm s)\nonumber\\
&&\hspace{1cm}+\mathds 1_{\{|\lambda|>1\}}|\lambda|^{\alpha-\beta}\textnormal{sign}^{m}(\lambda)\int_{ S_2} |s_2|^{\alpha}\bm{\varGamma_X}(\ud \bm s)\nonumber\\
&&=\textnormal{sign}^{m}(\lambda)\left(\mathds 1_{\{|\lambda|\le1\}}|\lambda|^{\beta}+\mathds 1_{\{|\lambda|>1\}}|\lambda|^{\alpha-\beta}\right)\|X_2\|_{\alpha}^{\alpha}.
\end{eqnarray}
(\ref{equality1}) implies that the left-hand side of (\ref{equality}) equals
\begin{equation}
\label{equality1'}
\left|[\lambda X_2, X_2]_{\alpha,\beta,m}\right|=\left(\mathds 1_{\{|\lambda|\le1\}}|\lambda|^{\beta}+\mathds 1_{\{|\lambda|>1\}}|\lambda|^{\alpha-\beta}\right)\|X_2\|_{\alpha}^{\alpha},
\end{equation}
while the right-hand side of (\ref{equality}) equals
\begin{eqnarray}
\label{equality2}
&&\min\left\{\|\lambda X_2\|_{\alpha}^{\alpha-\beta}\|X_2\|_{\alpha}^{\beta}, \|\lambda X_2\|_{\alpha}^{\beta}\|X_2\|_{\alpha}^{\alpha-\beta}\right\}\nonumber\\
&&=\min\left\{|\lambda|^{\alpha-\beta}, |\lambda|^{\beta}\right\}\|X_2\|_{\alpha}^{\alpha}\nonumber\\
&&=\left(\mathds 1_{\{|\lambda|\le1\}}|\lambda|^{\beta}+\mathds 1_{\{|\lambda|>1\}}|\lambda|^{\alpha-\beta}\right)\|X_2\|_{\alpha}^{\alpha}.
\end{eqnarray}
It follows from (\ref{equality1'}) and (\ref{equality2}) that (\ref{equality}) holds for $X_1=\lambda X_2$, with $\lambda\neq0$. We conclude that (\ref{equality}) holds when $X_1=\lambda X_2$ for some $\lambda\in\mathbb{R}$.
\item Now suppose that (\ref{equality}) holds, we show $X_1=\lambda X_2$ for some $\lambda\in\mathbb R$. By (\ref{bound_1}) and (\ref{Holder2}) we have
\begin{eqnarray}
\label{equal_cond_3}
&&\left|[X_1,X_2]_{\alpha,\beta,m}\right|\nonumber\\&&=\min\left\{\int_{ S_2} |s_2|^{\beta}| s_1|^{\alpha-\beta}\bm{\varGamma_X}(\ud \bm s), \int_{ S_2} |s_1|^{\beta}| s_2|^{\alpha-\beta}\bm{\varGamma_X}(\ud \bm s)\right\}\nonumber\\
&&= \min\left\{\|X_2\|_{\alpha}^{\beta}\|X_1\|_{\alpha}^{\alpha-\beta}, \|X_1\|_{\alpha}^{\beta}\|X_2\|_{\alpha}^{\alpha-\beta}\right\}.
\end{eqnarray}
Now recall an elementary fact: for four real numbers $a_1,b_1,a_2,b_2$ such that $a_1\le a_2$, $b_1\le b_2$ and $\min\{a_1,b_1\}=\min\{a_2,b_2\}$, we necessarily have $a_1=a_2$ or $b_1=b_2$. Applying this fact to the following two H\"older inequalities (the H\"older inequality holds since $\beta\le\alpha$) 
\begin{eqnarray*}
&&\int_{ S_2} |s_2|^{\beta}| s_1|^{\alpha-\beta}\bm{\varGamma_X}(\ud \bm s)\le \|X_2\|_{\alpha}^{\beta}\|X_1\|_{\alpha}^{\alpha-\beta},\\
&&\int_{ S_2} |s_1|^{\beta}| s_2|^{\alpha-\beta}\bm{\varGamma_X}(\ud \bm s)\le \|X_1\|_{\alpha}^{\beta}\|X_2\|_{\alpha}^{\alpha-\beta},
\end{eqnarray*}
and the equation (\ref{equal_cond_3}), we should have either
$$
\int_{ S_2} |s_2|^{\beta}| s_1|^{\alpha-\beta}\bm{\varGamma_X}(\ud \bm s)= \|X_2\|_{\alpha}^{\beta}\|X_1\|_{\alpha}^{\alpha-\beta}
$$
or
$$
\int_{ S_2} |s_1|^{\beta}| s_2|^{\alpha-\beta}\bm{\varGamma_X}(\ud \bm s)=\|X_1\|_{\alpha}^{\beta}\|X_2\|_{\alpha}^{\alpha-\beta}.
$$
Observe that according to the H\"older inequality's boundary condition, either of the above two equations leads to $X_1=\lambda X_2$ for some $\lambda\in\mathbb R$.
\end{itemize}
Finally (\ref{equality}) holds for $\beta\in[\alpha/2,\alpha]$ if and only if $X_1=\lambda X_2$ for some $\lambda\in\mathbb R$ and Theorem \ref{thm:Holder} is proved.
\end{proof}

We summarize that, (\ref{major_inequality}) in fact consists of 2 inequalities:
\begin{itemize}
\item For $\beta\in[\alpha/2,\alpha]$,
\begin{equation}
\label{inequality_part1}
\left|[X_1, X_2]_{\alpha,\beta,m}\right|\le\min\left\{\|X_1\|_{\alpha}^{\alpha-\beta}\|X_2\|_{\alpha}^{\beta}, \|X_1\|_{\alpha}^{\beta}\|X_2\|_{\alpha}^{\alpha-\beta}\right\}.
\end{equation}
This inequality satisfies the H\"older inequality's boundary condition, so its equality holds if and only if $X_1=\lambda X_2$ for some $\lambda\in\mathbb R$.
\item For $\beta>\alpha$,
\begin{equation}
\label{inequality_part2}
\left|[X_1, X_2]_{\alpha,\beta,m}\right|\le\min\left\{\|X_1\|_{\alpha}^{\alpha}, \|X_2\|_{\alpha}^{\alpha}\right\}.
\end{equation}
It is easy to demonstrate that in this case
$$
\min\left\{\|X_1\|_{\alpha}^{\alpha}, \|X_2\|_{\alpha}^{\alpha}\right\}\ge\min\left\{\|X_1\|_{\alpha}^{\alpha-\beta}\|X_2\|_{\alpha}^{\beta}, \|X_1\|_{\alpha}^{\beta}\|X_2\|_{\alpha}^{\alpha-\beta}\right\}.
$$
In fact, the equality in (\ref{inequality_part2}) does not necessarily hold when $X_1=\lambda X_2$ for some $\lambda \in\mathbb R$. A contradiction can be given by assuming $X_1=2X_2\ne0$ and $\beta=3>\alpha$. In this case, by using (\ref{equality1'}) we have 
$$
\left|[X_1,X_2]_{\alpha,3,m}\right|=2^{\alpha-3}\|X_2\|_{\alpha}^{\alpha}\neq\min\left\{\|X_1\|_{\alpha}^{\alpha}, \|X_2\|_{\alpha}^{\alpha}\right\}=\|X_2\|_{\alpha}^{\alpha}.
$$
\end{itemize}

In the zero-mean Gaussian case when $\alpha=2$, $\beta=1$ and $m=1$, the inequality (\ref{inequality_part1}) is in fact the conventional Cauchy-Schwarz inequality for the covariance of the random variables $X_1$ and $X_2$:
$$
|Cov(X_1,X_2)|\le \sqrt{Var(X_1)Var(X_2)}.
$$

The inequality (\ref{inequality_part1}) thus inspires us to introduce a new type of \enquote{correlation coefficient} between $X_1$ and $X_2$: if we define, for $\beta\in[\alpha/2,\alpha]$ and $m=0,1$,
\begin{equation}
\label{correlation}
\rho_{\alpha,\beta,m}(X_1,X_2)=\frac{[X_1,X_2]_{\alpha,\beta,m}}{\min\left\{\|X_2\|_{\alpha}^\beta\|X_1\|_{\alpha}^{\alpha-\beta},\|X_1\|_{\alpha}^\beta\|X_2\|_{\alpha}^{\alpha-\beta}\right\}},
\end{equation}
then $\rho_{\alpha,\beta,m}(X_1,X_2)$ satisfies the following properties.
\begin{description}
\item[(1)] $\rho_{\alpha,\beta,m}(X_1,X_2)$ is symmetric:
$$
\rho_{\alpha,\beta,m}(X_1,X_2)=\rho_{\alpha,\beta,m}(X_2,X_1),
$$
for all jointly S$\alpha$S random vector $(X_1,X_2)$.
\item[(2)] $\rho_{\alpha,\beta,m}(X_1,X_2)\in[-1,1]$. Moreover, from (\ref{equality1}) and (\ref{equality2}) we can derive that, if $X_1=\lambda X_2$ for some $\lambda\in\mathbb R$,
$$
\rho_{\alpha,\beta,m}(X_1,X_2)=\textnormal{sign}^{m}(\lambda).
$$
\item[(3)] $\rho_{\alpha,\beta,m}(X_1,X_2)$ becomes the conventional correlation coefficient between Gaussian variables, when $\alpha=2$, $\beta=1$, $m=1$ and $X_1,X_2$ have zero mean.
\end{description}

In Sect. \ref{dependency}, we will show that the symmetric covariation $[X_1,X_2]_{\alpha,\beta,m}$, can be viewed as a measure of dependence between $X_1$ and $X_2$, which indicates that the new proposed \enquote{correlation coefficient}  $\rho_{\alpha,\beta,m}(X_1,X_2)$ is indeed another measure of bivariate dependence.

\section[Series Representation of the Characteristic Function of Bivariate S\texorpdfstring{$\alpha $}{TEXT}S  Distribution]{Series Representation of the Characteristic\\ Function of Bivariate S\texorpdfstring{$\alpha $}{TEXT}S  Distribution}
\label{section:series}
In this section, we answer the question raised in Sect. \ref{Background and Motivation} by a series representation of $\sigma^\alpha_{(X_1,X_2)}(\theta_1,\theta_2)$ in terms of the symmetric covariations. In particular, Theorem \ref{lem:scale} provides the explicit form of the  $(\widetilde{c_k}(X_1,X_2,\theta_1,\theta_2,\alpha))_{k\ge0}$s in terms of the symmetric covariations such that (\ref{problem}) holds. 
\subsection{Taylor Series}
We now focus on getting a series expansion of $|x+b|^\alpha$ with $b\neq 0$ and $\alpha>0$. It will be applied to derive the series representation of $\sigma^\alpha_{(X_1,X_2)}(\theta_1,\theta_2)$ of the jointly S$\alpha $S distribution in a later section. First recall the following classical result of Binomial series.
\begin{lem}[Binomial series]
\label{taylor}
Let $f(x)=(x+1)^\alpha$ with $x+1\ge0$ and $\alpha\ge0$. Then for $|x|\le 1$, $f(x)$ admits the following series expansion, which is absolutely convergent:
	\begin{equation*}
	f(x)=\sum_{k=0}^{+\infty}\frac{(\alpha)_k}{k!}x^k,
	\end{equation*}
    where $(\alpha)_k$ denotes the falling factorials, defined by: for nonnegative integer $k$,
$$
(\alpha)_k=\left\{
\begin{array}{ll}
\frac{\Gamma(\alpha+1)}{\Gamma(\alpha+1-k)}&~\mbox{if $\alpha\notin\mathbb Z_+$ or if $\alpha\in\mathbb Z_+$ and $k\le \alpha$},\\
0&~\mbox{otherwise}.
\end{array}\right.
$$
\end{lem}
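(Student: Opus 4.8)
The plan is to recognize the right-hand side as the Taylor series of $f$ at the origin and to establish the two claims (absolute convergence and identity with $f$) in two regimes: the open interval $(-1,1)$ and the two endpoints $x=\pm1$.

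First I would differentiate $f$ repeatedly to obtain $f^{(k)}(x)=(\alpha)_k(1+x)^{\alpha-k}$, so that $f^{(k)}(0)=(\alpha)_k$ and the coefficients $(\alpha)_k/k!$ are precisely the Taylor coefficients of $f$ at $0$. Note that when $\alpha\in\mathbb Z_+$ and $k>\alpha$ the product defining $(\alpha)_k$ contains the factor $\alpha-\alpha=0$, so $f^{(k)}\equiv0$ and the series terminates, in agreement with the convention $(\alpha)_k=0$ adopted in the statement.

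On the interior $|x|<1$, absolute convergence follows from the ratio test, since the quotient of consecutive absolute terms is $\frac{|\alpha-k|}{k+1}|x|\to|x|<1$. Writing $S(x)$ for the sum, the convergence is uniform on compact subsets of $(-1,1)$, and so is the series of termwise derivatives; hence $S$ is differentiable with $S'$ computed term by term. A direct computation using $(\alpha)_{k+1}=(\alpha-k)(\alpha)_k$ then yields the identity $(1+x)S'(x)=\alpha S(x)$ together with $S(0)=1$, from which $\frac{\mathrm{d}}{\mathrm{d}x}\big((1+x)^{-\alpha}S(x)\big)=0$; thus $(1+x)^{-\alpha}S(x)\equiv1$ and $S(x)=(1+x)^\alpha=f(x)$ on $(-1,1)$.

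It remains to treat the endpoints, which I regard as the main obstacle, because the ratio test degenerates to the value $|x|=1$ there and the differential-equation argument no longer applies. For absolute convergence at $x=\pm1$ I would use a finer criterion applied to $a_k:=|(\alpha)_k|/k!$. When $\alpha\in\mathbb Z_+$ the sum is finite; otherwise, for $k>\alpha$ one has $\frac{a_k}{a_{k+1}}=\frac{k+1}{k-\alpha}$, so Raabe's test gives $\lim_k k\big(\frac{a_k}{a_{k+1}}-1\big)=\alpha+1>1$, yielding absolute convergence and hence convergence of the series at both endpoints. Finally, to identify the sum with $f$ there, I would invoke Abel's theorem: since the power series converges at $x=1$ and at $x=-1$, its sum $S$ is continuous on $[-1,1]$; as $S=f$ on $(-1,1)$ and $f$ is continuous on $[-1,1]$ (with $f(-1)=0$ when $\alpha>0$ and $f\equiv1$ when $\alpha=0$), continuity forces $S(\pm1)=f(\pm1)$, completing the proof.
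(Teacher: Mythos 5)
Your proof is correct, but note that the paper does not prove this lemma at all: it is introduced with ``Recall the following classical result on the Binomial series'' and used as a black box, so there is no in-paper argument to compare against. What you have written is a complete, self-contained justification of exactly the version the paper needs, namely validity and absolute convergence on the \emph{closed} interval $[-1,1]$, which is the nonstandard part (many textbook statements stop at $|x|<1$) and is precisely what Proposition 4.2 and Theorem 4.3 later rely on when $|\theta_1 s_1|=|\theta_2 s_2|$. Your three ingredients are all sound: the ODE characterization $(1+x)S'(x)=\alpha S(x)$, $S(0)=1$ identifies the sum with $(1+x)^\alpha$ on the open interval; Raabe's test with limit $\alpha+1>1$ gives absolute convergence of $\sum |(\alpha)_k|/k!$ at the endpoints; and Abel's theorem (or, since the convergence at $|x|=1$ is absolute, simply the Weierstrass $M$-test giving uniform convergence on $[-1,1]$) transfers the identity to $x=\pm1$ by continuity. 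The only place to be slightly careful is $\alpha=0$, where the Raabe limit equals $1$ and the test is inconclusive; but there $(\alpha)_k=0$ for all $k\ge1$ (either by the stated convention or because $\Gamma(1-k)$ has a pole), so the series reduces to the single term $1=f(x)$ and nothing is lost. With that degenerate case set aside, your argument is a valid proof of the lemma the paper leaves to the reader.
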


Let $f(x)=|x+b|^\alpha$ with $b\neq 0$ and $\alpha>0$, applying the Binomial series, a convergent Taylor series representation of the particular function is provided via the following proposition. 
\begin{prop}
\label{rmk:taylor_abs}
Let $b\neq0,\alpha>0$. The mapping $x\mapsto|x+b|^{\alpha}$ has the following convergent series representation:
\begin{equation}
\label{taylor_abs}
|x+b|^\alpha=\sum_{k=0}^{+\infty}\frac{(a)_k}{k!}|b|^{\alpha-k}\textnormal{sign}^k(b)x^k,~\mbox{for all $x\in[-|b|,|b|]$}.
\end{equation}

\end{prop}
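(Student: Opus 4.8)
The plan is to reduce the claim directly to the Binomial series of Lemma \ref{taylor} by factoring out $|b|$ and checking that the resulting argument lands inside the interval of convergence. First I would write $x+b=b\big(1+\tfrac{x}{b}\big)$, which is legitimate since $b\ne0$. For $x\in[-|b|,|b|]$ one has $|x/b|\le1$, and hence $1+\tfrac{x}{b}\ge0$; this nonnegativity is precisely what allows me to discard the absolute value after raising to the power $\alpha$, giving
\begin{equation*}
|x+b|^{\alpha}=|b|^{\alpha}\Big|1+\tfrac{x}{b}\Big|^{\alpha}=|b|^{\alpha}\Big(1+\tfrac{x}{b}\Big)^{\alpha}.
\end{equation*}

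Next I would apply Lemma \ref{taylor} with the substitution $y=x/b$, observing that $|y|\le1$, $1+y\ge0$ and $\alpha>0$ satisfy its hypotheses. This yields the absolutely convergent expansion $\big(1+\tfrac{x}{b}\big)^{\alpha}=\sum_{k=0}^{+\infty}\frac{(\alpha)_k}{k!}\big(\tfrac{x}{b}\big)^k$, where absolute convergence is inherited verbatim from Lemma \ref{taylor}. Multiplying through by $|b|^{\alpha}$ and reading off the coefficient of $x^k$ leaves the factor $|b|^{\alpha}b^{-k}$, which I would rewrite as $|b|^{\alpha-k}\,\textnormal{sign}^{k}(b)$ using $b^{-k}=|b|^{-k}\textnormal{sign}^{-k}(b)$ together with $\textnormal{sign}^{-k}(b)=\textnormal{sign}^{k}(b)$ (valid because $\textnormal{sign}(b)=\pm1$). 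This produces exactly the series asserted in \eqref{taylor_abs}.

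The argument is essentially a single change of variable followed by sign bookkeeping, so I do not anticipate a genuine obstacle. The only two points requiring care are the algebraic passage from $|b|^{\alpha}b^{-k}$ to $|b|^{\alpha-k}\textnormal{sign}^{k}(b)$, and the verification that $1+x/b\ge0$ holds at the endpoints $x=\pm|b|$ (where $x/b$ equals $\mp1$ or $\pm1$ according to the sign of $b$), which is what justifies replacing $\big|1+x/b\big|^{\alpha}$ by $\big(1+x/b\big)^{\alpha}$ uniformly over the entire closed interval $[-|b|,|b|]$.
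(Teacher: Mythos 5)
Your proof is correct and follows essentially the same route as the paper: factor out $b$, reduce to the Binomial series of Lemma \ref{taylor} at $y=x/b$ with $|y|\le 1$, and recover the stated coefficients. The only (cosmetic) difference is that the paper splits into the cases $b>0$ and $b<0$, whereas you absorb both at once via the identity $|b|^{\alpha}b^{-k}=|b|^{\alpha-k}\textnormal{sign}^{k}(b)$.
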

\begin{proof}
First observe that, for $b\ne0$ and $x\in[-|b|,|b|]$,
$$
|x+b|^{\alpha}=\left\{
\begin{array}{ll}
(x+b)^\alpha&\mbox{if $b>0$},\\
(-x-b)^\alpha&\mbox{if $b<0$}.
\end{array}\right.
$$
Next we consider the above two cases separately.
\begin{itemize}
\item If $b>0$, the mapping $x\mapsto(x+b)^{\alpha}$ is infinitely differentiable at point $0$ and $(x+b)^\alpha=b^\alpha(x/b+1)^\alpha$. Using Lemma \ref{taylor}, the convergent Taylor series for $x\mapsto(x+b)^{\alpha}$ around $0$ can be expressed as: for $x\in[-|b|,|b|]$,
$$
(x+b)^{\alpha}=\sum_{k=0}^{+\infty}\frac{(\alpha)_k}{k!}x^kb^{\alpha-k}.
$$
\item If $b<0$, similarly, we have $(-x-b)^\alpha=(-b)^\alpha(x/b+1)^\alpha$ and then the Taylor series for $x\mapsto(-x-b)^{\alpha}$ at point $0$ is given by: for $x\in[-|b|,|b|]$,
$$
(-x-b)^{\alpha}=\sum_{k=0}^{+\infty}\frac{(\alpha)_k(-1)^k}{k!}x^k(-b)^{\alpha-k}.
$$
\end{itemize}
Finally, (\ref{taylor_abs}) is obtained by combining the above two cases.
\end{proof}

\subsection{Series Representation via Symmetric Covariations}

Theorem  \ref{lem:scale} below provides a series representation of the scale parameter to the power of $\alpha$ via symmetric covariations.
\begin{thm}
\label{lem:scale}
Let $\bm{X}=(X_1,X_2)$ be an S$\alpha $S random vector with $\alpha\in(0,2]$ and spectral measure $\bm{\varGamma_{X}}$.
	The scale parameter of $\langle\bm\theta,\bm X\rangle$ to the power of $\alpha$  can be expressed via the following series representation: 
	\begin{equation}
    \label{sigma:taylor}
\sigma_{\bm X}^\alpha(\theta_1,\theta_2)=\sum_{k=0}^{+\infty}\frac{(\alpha)_k}{k!}[\theta_1X_1,\theta_2X_2]_{\alpha,k,M(k)},
	\end{equation}
	where $M(k)=(k \mod 2)$.
    \end{thm}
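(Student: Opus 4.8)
The plan is to expand the integrand $|\theta_1 s_1+\theta_2 s_2|^\alpha$ pointwise on the sphere by means of the Binomial-type series of Proposition \ref{rmk:taylor_abs}, and then to integrate this expansion term-by-term against $\bm{\varGamma_X}$. First I would record the two facts that turn the claim into a single interchange-of-limits problem: on one hand $\sigma_{\bm X}^\alpha(\theta_1,\theta_2)=\int_{S_2}|\theta_1 s_1+\theta_2 s_2|^\alpha\bm{\varGamma_X}(\ud\bm s)$ by (\ref{scaling}); on the other hand, applying Theorem \ref{high_cov} with $a_1=\theta_1$, $a_2=0$, $b_1=0$, $b_2=\theta_2$ gives $[\theta_1 X_1,\theta_2 X_2]_{\alpha,k,M(k)}=\int_{S_2}K_{\alpha,k,M(k)}(\theta_1 s_1,\theta_2 s_2)\bm{\varGamma_X}(\ud\bm s)$. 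Hence it suffices to establish the pointwise identity $|\theta_1 s_1+\theta_2 s_2|^\alpha=\sum_{k=0}^{\infty}\frac{(\alpha)_k}{k!}K_{\alpha,k,M(k)}(\theta_1 s_1,\theta_2 s_2)$ for $\bm{\varGamma_X}$-almost every $\bm s$, and then to justify pushing $\int_{S_2}$ through the series.

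For the pointwise identity I would split $S_2$ along the two regions that already appear in the definition (\ref{K}) of the kernel. On $\{|\theta_1 s_1|\le|\theta_2 s_2|\}$ I set $x=\theta_1 s_1$ and $b=\theta_2 s_2$, which is legitimate since $|x|\le|b|$ and $b\ne0$ except at points where $\theta_1 s_1=\theta_2 s_2=0$ (a trivial case in which both sides vanish); Proposition \ref{rmk:taylor_abs} then yields the expansion in powers of $\theta_1 s_1$. Rewriting $(\theta_1 s_1)^k=|\theta_1 s_1|^k\textnormal{sign}^k(\theta_1 s_1)$ and using $\textnormal{sign}^k(\theta_1 s_1)\textnormal{sign}^k(\theta_2 s_2)=\textnormal{sign}^{M(k)}(\theta_1 s_1\theta_2 s_2)$ — an equality that holds on every term not already annihilated by its factor $|\theta_1 s_1|^k$, because even powers of $\textnormal{sign}$ collapse to $1$ on nonzero arguments — reproduces exactly the first branch of $K_{\alpha,k,M(k)}$. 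On $\{|\theta_1 s_1|>|\theta_2 s_2|\}$ I argue symmetrically with $x=\theta_2 s_2$ and $b=\theta_1 s_1\ne0$, recovering the second branch.

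The main obstacle is the interchange of summation and integration, which I would settle with Tonelli's theorem after proving a uniform-in-$k$ domination. The crucial estimate is that on the first region $|\theta_1 s_1|\le|\theta_2 s_2|$ forces $|\theta_1 s_1|^k|\theta_2 s_2|^{\alpha-k}\le|\theta_2 s_2|^\alpha$, while on the second region $|\theta_1 s_1|^{\alpha-k}|\theta_2 s_2|^k\le|\theta_1 s_1|^\alpha$, so that $\bigl|K_{\alpha,k,M(k)}(\theta_1 s_1,\theta_2 s_2)\bigr|\le|\theta_1 s_1|^\alpha+|\theta_2 s_2|^\alpha$ independently of $k$. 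Combining this with the absolute convergence of the Binomial series at $x=1$ supplied by Lemma \ref{taylor}, which gives $\sum_{k\ge0}\frac{|(\alpha)_k|}{k!}<\infty$, and with the finiteness of $\int_{S_2}\bigl(|\theta_1 s_1|^\alpha+|\theta_2 s_2|^\alpha\bigr)\bm{\varGamma_X}(\ud\bm s)$ (the spectral measure being finite), I would conclude that $\sum_{k\ge0}\frac{|(\alpha)_k|}{k!}\int_{S_2}\bigl|K_{\alpha,k,M(k)}\bigr|\bm{\varGamma_X}(\ud\bm s)<\infty$. This legitimizes exchanging the sum and the integral, so that integrating the pointwise identity delivers $\sigma_{\bm X}^\alpha(\theta_1,\theta_2)=\sum_{k\ge0}\frac{(\alpha)_k}{k!}[\theta_1 X_1,\theta_2 X_2]_{\alpha,k,M(k)}$, which is the asserted representation.
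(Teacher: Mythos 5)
Your proposal is correct and follows essentially the same route as the paper's proof: the same split of $S_2$ into the regions $\{|\theta_1 s_1|\le|\theta_2 s_2|\}$ and $\{|\theta_1 s_1|>|\theta_2 s_2|\}$, the same pointwise expansion via Proposition \ref{rmk:taylor_abs}, and the same domination estimate $|\theta_1 s_1|^k|\theta_2 s_2|^{\alpha-k}\le|\theta_2 s_2|^\alpha$ combined with $\sum_k|(\alpha)_k|/k!<\infty$ to justify interchanging sum and integral (you invoke Tonelli where the paper uses dominated convergence, an immaterial difference). Your explicit identification of $[\theta_1X_1,\theta_2X_2]_{\alpha,k,M(k)}$ as an integral against $\bm{\varGamma_X}$ via Theorem \ref{high_cov}, and your handling of the sign bookkeeping and of the degenerate points where $\theta_1 s_1=\theta_2 s_2=0$, are slightly more careful than the paper's but do not change the argument.
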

    
	\begin{proof}
		By (\ref{scaling}) we can write
\begin{equation}
\label{sigma}
\sigma_{\bm X}^\alpha(\theta_1,\theta_2)=	\int_{S_2}\Big|s_1\theta_1+\theta_2s_2\Big|^{\alpha}\bm{\varGamma_X}(\ud \bm s)=T_1(\theta_1,\theta_2)+T_2(\theta_1,\theta_2),
\end{equation}
where
\begin{equation}
        \label{T1}
        T_1(\theta_1,\theta_2)=\int_{\{(s_1,s_2)\in S_2:~|\theta_1s_1|\le|\theta_2s_2|\}}\Big|\theta_1s_1+\theta_2s_2\Big|^{\alpha}\bm{\varGamma_X}(\ud \bm s)
        \end{equation}
        and
        \begin{equation}
        \label{T2}
        T_2(\theta_1,\theta_2)=\int_{\{(s_1,s_2)\in S_2:~|\theta_1s_1|>|\theta_2s_2|\}}\Big|\theta_1s_1+\theta_2s_2\Big|^{\alpha}\bm{\varGamma_X}(\ud \bm s).
        \end{equation}
        For $|\theta_1s_1|\le |\theta_2s_2|$, plugging $x=\theta_1s_1$ and $b=\theta_2s_2$ in (\ref{taylor_abs}), we obtain
        \begin{eqnarray*}
        \label{T1:kernel_bound}
        &&|\theta_1s_1+\theta_2s_2|^\alpha=\sum_{k=0}^{+\infty}\frac{(\alpha)_k}{k!}|\theta_2s_2|^{\alpha-k}\textnormal{sign}^k(\theta_2s_2)(\theta_1s_1)^k\nonumber\\
        &&=\sum_{k=0}^{+\infty}\frac{(\alpha)_k}{k!}|\theta_1s_1|^k|\theta_2s_2|^{\alpha-k}\textnormal{sign}^k(\theta_1\theta_2s_1s_2).
        \end{eqnarray*}
        Observe that, when $|\theta_1s_1|\le |\theta_2s_2|$,
        \begin{eqnarray*}
        &&\sup_{N\ge0}\left|\sum_{k=0}^{N}\frac{(\alpha)_k}{k!}|\theta_1s_1|^k|\theta_2s_2|^{\alpha-k}\textnormal{sign}^k(\theta_1\theta_2s_1s_2)\right|\\
        &&\le \sum_{k=0}^{+\infty}\frac{|(\alpha)_k|}{k!}|\theta_2s_2|^{\alpha-k}|\theta_1s_1|^k\le |\theta_2s_2|^{\alpha}\sum_{k=0}^{+\infty}\frac{|(\alpha)_k|}{k!}<+\infty
        \end{eqnarray*}
    and   
  \begin{eqnarray*}
  &&\int_{\{(s_1,s_2)\in S_2:~|\theta_1s_1|\le|\theta_2s_2|\}}|\theta_2s_2|^{\alpha}\sum_{k=0}^{+\infty}\frac{|(\alpha)_k|}{k!}\bm{\varGamma_X}(\ud \bm s)\\
  &&\le |\theta_2|^\alpha[X_2,X_2]_{\alpha,0,0}\sum_{k=0}^{+\infty}\frac{|(\alpha)_k|}{k!}\nonumber<+\infty.
\end{eqnarray*}
    Then we can apply Lebesgue dominating convergence theorem to obtain
    \begin{eqnarray}
&&T_1(\theta_1,\theta_2)=\sum_{k=0}^{+\infty}\frac{(\alpha)_k}{k!}\int_{\{(s_1,s_2)\in S_2:~|\theta_1s_1|\le|\theta_2s_2|\}}\nonumber\\
&&\hspace{2cm}|\theta_1s_1|^k|\theta_2s_2|^{\alpha-k}\textnormal{sign}^k(\theta_1\theta_2s_1s_2)\bm{\varGamma_X}(\ud \bm s).\label{T1:series}
        \end{eqnarray}
        Similarly we can show that, for $|\theta_1s_1|>|\theta_2s_2|$,
        \begin{eqnarray}
        &&T_2(\theta_1,\theta_2)=\sum_{k=0}^{+\infty}\frac{(\alpha)_k}{k!}\int_{\{(s_1,s_2)\in S_2:~|\theta_1s_1|>|\theta_2s_2|\}}\nonumber\\
&&\hspace{2cm}|\theta_1s_1|^{\alpha-k}|\theta_2s_2|^k\textnormal{sign}^k(\theta_1\theta_2s_1s_2)\bm{\varGamma_X}(\ud \bm s).        \label{T2:series}
        \end{eqnarray}
        Then, (\ref{sigma:taylor}) follows from (\ref{sigma}) - (\ref{T2:series}) and Definition \ref{covariation}.
		\end{proof}
		
		According to Theorem \ref{lem:scale}, (\ref{problem}) holds by taking 
		$$
		\widetilde{c_k}(X_1,X_2,\theta_1,\theta_2, \alpha)=(\alpha)_k[\theta_1X_1,\theta_2X_2]_{\alpha,k,M(k)}.
		$$
		Further, from Theorem \ref{lem:scale} and Corollary \ref{non-linear2} we note that $(\theta_1,\theta_2)$ and $(X_1,X_2)$ are generally not separable in the series representation of the non-Gaussian case, because  symmetric covariations are generally not linear (see, e.g., Corollary \ref{non-linear2}). It is consistent with the \enquote{indivisibility} of  stable non-Gaussian distributions. In addition, it is easy to verify the impossibility of finding a sequence $(c_k(X_1,X_2,\alpha))_{k\ge0}$ with $\alpha\neq2$ that does not depend on $\theta_1$ and $\theta_2$ but satisfies
		$$
\sigma_{\bm X}^\alpha(\theta_1,\theta_2)=\sum_{k=0}^{+\infty}\frac{c_k(X_1,X_2,\alpha)\theta_1^k\theta_2^{\alpha-k}}{k!},~\mbox{for all $\theta_1,\theta_2\in\mathbb R$}.
$$
This above fact also indicates that the expression of $\widetilde{c_k}(X_1,X_2,\theta_1,\theta_2, \alpha)$ in Theorem \ref{lem:scale} is reasonably non-separable. However, the only exception is the Gaussian case when $\alpha=2$, which is discussed in details in the remark below.
\begin{remark}
Assume now $\alpha=2$. Using the facts that $(2)_k=0$ for all $k>2$ and $$
[X_1,X_2]_{2,0,0}+[X_1,X_2]_{2,2,0}=\|X_1\|_{2}^2+\|X_2\|_{2}^2,
$$
Theorem \ref{lem:scale} implies
\begin{eqnarray*}\nonumber
		&&\sigma_{\bm X}^2(\theta_1,\theta_2)=\sum_{k=0}^{2}\frac{(2)_k}{k!}[\theta_1X_1,\theta_2X_2]_{2,k,M(k)}\nonumber\\    
        &&=[\theta_1X_1,\theta_2X_2]_{2,0,0}+2[\theta_1X_1,\theta_2X_2]_{2,1,1}+[\theta_1X_1,\theta_2X_2]_{2,2,0}\\
&&=\theta_1^2\|X_1\|_{2}^2+2\theta_1\theta_2[X_1,X_2]_{2,1,1}+\theta_2^2\|X_2\|_{2}^2\\
        &&=\frac{1}{2}\theta_2^2\text{Var}(X_2)+\theta_1\theta_2\text{Cov}(X_1,X_2)+\frac{1}{2}\theta_1^2\text{Var}(X_1).
		\end{eqnarray*}
       The series representation now is consistent with that in the Gaussian case (see (\ref{ch:gaussian})). 
\end{remark}

\section{Symmetric Covariations and James Orthogonality}
\label{dependency}

The conventional covariation is tightly related to some sort of dependence between the coordinates of a jointly S$\alpha$S random vector. For example, the conventional covariation $[X_1,X_2]_{\alpha}$ for $\alpha\in(1,2)$ generally does not define a scalar product and the linear space of  S$\alpha$S random variables equipped with such covariation is not a pre-Hilbert space. Therefore, instead of using the classical notion of orthogonality which is only valid in (pre-)Hilbert spaces, James orthogonality is involved in the normed vector space endowed with the covariation norm \cite{Taqqu1994}.

\begin{definition}
Let $(E,\|\cdot\|)$ be a normed vector space. An element $x\in E$ is said to be James orthogonal to another element $y\in E$ (denoted by $x\perp_J y$) if 
$$
\|x+\lambda y\|\ge\|x\|,~\mbox{for any $\lambda\in\mathbb R$}.
$$
\end{definition}

James orthogonality naturally extends the classical notion of orthogonality, in the sense that if $E$ is a pre-Hilbert space equipped with the scalar product $\langle\cdot,\cdot\rangle$, then $\langle x,y\rangle=0$ implies $x\perp_Jy$ for any $x,y\in E$. It is shown (see, e.g., Proposition 2.9.2 in \cite{Taqqu1994}) that the zero covariation (i.e., $[X_1,X_2]_{\alpha}=0$) is equivalent to $X_1\perp_J X_2$. In this section, we will mainly discuss how to relate  symmetric covariations to independence and James orthogonality, respectively.

Similar to the covariance functions, symmetric covariations measure the dependence between two variables of jointly S$\alpha$S distribution. Below are two  necessary conditions, involving the symmetric covariations, for the two variables to be independent. 
The following Proposition \ref{indep1} states that independence implies vanishing symmetric covariation, which is a property similar to that of covariance.
\begin{prop}[Necessary condition for independence]
\label{indep1}
	Let $\bm{X}=(X_1,X_2)$ be an S$\alpha $S random vector with $\alpha\in(0,2]$ and spectral measure $\bm{\varGamma_{X}}$. If $X_1$ and $X_2$ are independent, then $\bm{\varGamma_{X}}$ is a discrete measure and for $\beta\ge0$, $m\in \{0,1\}$,
	\small
    \begin{equation*}
    [X_1,X_2]_{\alpha,\beta,m}=\left\{
    \begin{array}{ll}
    \bm{\varGamma_X}(\{(0,1),(0,-1)\})+\bm{\varGamma_X}(\{(1,0),(-1,0)\})&~\mbox{if $\beta=0$, $m=0$},\\
    0&~\mbox{otherwise.}
    \end{array}\right.
    \end{equation*} 
    \normalsize
\end{prop}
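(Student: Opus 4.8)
The plan is to reduce the statement to a fact about the support of $\bm{\varGamma_X}$ and then evaluate the kernel $K_{\alpha,\beta,m}$ from (\ref{K}) pointwise. The crucial input is the classical characterization of independence for jointly S$\alpha$S vectors: $X_1$ and $X_2$ are independent if and only if $\bm{\varGamma_X}$ is concentrated on the intersection of $S_2$ with the coordinate axes, i.e. on the four points $(0,1),(0,-1),(1,0),(-1,0)$ (see e.g. \cite{Taqqu1994}). I would either quote this directly or recover it from (\ref{ch:stable}): factorizing the characteristic function and taking logarithms gives
$$\int_{S_2}\left(|\theta_1 s_1+\theta_2 s_2|^\alpha-|\theta_1 s_1|^\alpha-|\theta_2 s_2|^\alpha\right)\bm{\varGamma_X}(\ud\bm s)=0,\qquad\forall(\theta_1,\theta_2)\in\mathbb R^2.$$
For $\alpha\in(0,1)$ the integrand is $\le 0$ everywhere by subadditivity of $t\mapsto|t|^\alpha$ and is strictly negative whenever $\theta_1 s_1\ne 0$ and $\theta_2 s_2\ne 0$; fixing $\theta_1,\theta_2\ne 0$ then forces $\bm{\varGamma_X}$ to charge no point with $s_1s_2\ne 0$, so $\bm{\varGamma_X}$ is concentrated on the four axis points. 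The general $\alpha\in(0,2]$ case, where this sign is no longer constant for opposite-signed arguments, is exactly the cited standard result. In either case $\bm{\varGamma_X}$ is discrete, which establishes the first assertion.

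With this support statement the symmetric covariation collapses to a finite sum. By Definition \ref{covariation},
$$[X_1,X_2]_{\alpha,\beta,m}=\sum_{\bm s_0} K_{\alpha,\beta,m}(\bm s_0)\,\bm{\varGamma_X}(\{\bm s_0\}),$$
where $\bm s_0$ ranges over $(0,\pm1)$ and $(\pm1,0)$. The remaining work is a direct evaluation of (\ref{K}) at these points, for which I would fix two conventions explicitly: $|0|^\beta=0$ for $\beta>0$ while $|0|^0=1$, and $\textnormal{sign}^m(0)=1$ for $m=0$ while $\textnormal{sign}^1(0)=0$. At $(0,\pm1)$ one has $|s_1|\le|s_2|$, so the first branch applies and $K_{\alpha,\beta,m}(0,\pm1)=|0|^\beta\,1^{\alpha-\beta}\,\textnormal{sign}^m(0)$; at $(\pm1,0)$ one has $|s_1|>|s_2|$, so the second branch applies and $K_{\alpha,\beta,m}(\pm1,0)=1^{\alpha-\beta}\,|0|^\beta\,\textnormal{sign}^m(0)$.

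Reading off these four values finishes the argument. If $\beta>0$, the factor $|0|^\beta=0$ annihilates the kernel at every axis point; if $\beta=0$ and $m=1$, the factor $\textnormal{sign}^1(0)=0$ again annihilates every term; and only when $\beta=0$ and $m=0$ do all four kernel values equal $1$. In that unique case the sum reduces to the total mass of the four atoms, namely $\bm{\varGamma_X}(\{(0,1),(0,-1)\})+\bm{\varGamma_X}(\{(1,0),(-1,0)\})$, while in all other cases $[X_1,X_2]_{\alpha,\beta,m}=0$, which is precisely the claimed dichotomy.

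The main obstacle is the first step, passing from independence to concentration of $\bm{\varGamma_X}$ on the axes; the subsequent kernel evaluation is mechanical once the $0^0$ and $\textnormal{sign}^m(0)$ conventions are pinned down. Since a fully self-contained proof of the support statement would require separately handling $\alpha\ge 1$, where $|a+b|^\alpha-|a|^\alpha-|b|^\alpha$ changes sign for opposite-signed $a,b$, the cleanest route is to invoke the established characterization in \cite{Taqqu1994} rather than reprove it here.
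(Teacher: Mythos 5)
Your proposal is correct and follows essentially the same route as the paper: it invokes the standard fact (Property 2.7.11 in \cite{Taqqu1994}) that independence forces $\bm{\varGamma_X}$ to concentrate on the four axis points of $S_2$, and then evaluates $K_{\alpha,\beta,m}$ at those points to read off the dichotomy. Your explicit handling of the $|0|^0=1$ and $\textnormal{sign}^m(0)$ conventions is a slightly more careful rendering of the same final computation the paper performs.
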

\begin{proof}
According to Property 2.7.11 in \cite{Taqqu1994}, the independence of $X_1$ and $X_2$ implies that the spectral measure $\bm{\varGamma_X}$ of $(X_1,X_2)$ must be concentrated on the points $(1,0), (-1,0),$ $(0,1)$, and $(0,-1)$ of the unit sphere $S_2$. By Definition \ref{covariation}, we then have, for $\beta\ge0$ and $m=0$ or $1$,
\begin{eqnarray*}
&&[X_1,X_2]_{\alpha,\beta,m}=\int_{\{(s_1,s_2)\in S_2:~|s_1|\le|s_2|\}} |s_1|^{\beta}|s_2|^{\alpha-\beta}\textnormal{sign}^{m}(s_1s_2)\bm{\varGamma_X}(\ud \bm s)\\
&&\hspace{3cm}+\int_{\{(s_1,s_2)\in S_2:~|s_1|>|s_2|\}} |s_1|^{\alpha-\beta}|s_2|^{\beta}\textnormal{sign}^{m}(s_1s_2)\bm{\varGamma_X}(\ud \bm s)\\&&\hspace{2cm}= |0|^{\beta}|1|^{\alpha-\beta}\textnormal{sign}^{m}(0)\bm{\varGamma_X}(\{(0,1),(0,-1)\})\\&&\hspace{3cm}+ |1|^{\alpha-\beta}|0|^{\beta}\textnormal{sign}^{m}(0)\bm{\varGamma_X}(\{(1,0),(-1,0)\}),
\end{eqnarray*}
because the support of $\bm{\varGamma_X}$ is such that either $s_1$ or $s_2$ is zero. Note that in the above equation, $[X_1,X_2]_{\alpha,\beta,m}$ is not necessarily vanishing only if $\beta=m=0$, i.e.,
$$
[X_1,X_2]_{\alpha,0,0}=\bm{\varGamma_X}(\{(0,1),(0,-1)\})+\bm{\varGamma_X}(\{(1,0),(-1,0)\}).
$$
\end{proof}

Another consequence of independence is the additivity, which is given in  Proposition \ref{indep2}. To prove it we first recall the following lemma.
\begin{lem}
	\label{int_equality}
	Let $(\bm{E,\varepsilon,m})$ be an arbitrary $\sigma$-finite measure space, and let $f_i: \bm{E}\rightarrow \mathbb{R},~ i=1,2,$ be two functions in $L^{\alpha}(\bm{E,\varepsilon,m})$.
	\begin{itemize}
		\item If $0<\alpha<1$, then either of the relations
		\begin{equation*}
		\int_{\bm{E}}|f_1(x)+f_2(x)|^{\alpha}\bm{m}(\ud x)=		\int_{\bm{E}}|f_1(x)|^{\alpha}\bm{m}(\ud x)+	\int_{\bm{E}}|f_2(x)|^{\alpha}\bm{m}(\ud x)
		\end{equation*}
		or
		\begin{equation*}
		\int_{\bm{E}}|f_1(x)-f_2(x)|^{\alpha}\bm{m}(\ud x)=		\int_{\bm{E}}|f_1(x)|^{\alpha}\bm{m}(\ud x)+	\int_{\bm{E}}|f_2(x)|^{\alpha}\bm{m}(\ud x)
		\end{equation*}
		implies 
		\begin{equation*}
		f_1(x)f_2(x)=0 \quad \bm{m}\mbox{\textendash} a.e..
		\end{equation*}
		\item If $1\leqslant\alpha\le2$, then
		\begin{eqnarray}\nonumber
		&&\int_{\bm{E}}|f_1(x)+f_2(x)|^{\alpha}\bm{m}(\ud x)=	\int_{\bm{E}}|f_1(x)-f_2(x)|^{\alpha}\bm{m}(\ud x)\\\nonumber	&&=\int_{\bm{E}}|f_1(x)|^{\alpha}\bm{m}(\ud x)+\int_{\bm{E}}|f_2(x)|^{\alpha}\bm{m}(\ud x)
		\end{eqnarray}
		implies
		\begin{equation*}
		f_1(x)f_2(x)=0 \quad \bm{m}\mbox{\textendash} a.e..
		\end{equation*}
	\end{itemize}
\end{lem}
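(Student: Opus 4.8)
The plan is to derive both cases from a sharp \emph{pointwise} inequality for $t\mapsto|t|^{\alpha}$ together with the elementary fact that a nonnegative integrable function whose integral vanishes must be zero $\bm m$-almost everywhere. For fixed reals $a,b$ I would compare $|a\pm b|^{\alpha}$ with $|a|^{\alpha}+|b|^{\alpha}$ and pin down \emph{exactly} when equality occurs. Since $f_1,f_2\in L^{\alpha}(\bm E,\varepsilon,\bm m)$, all the integrals below are finite, so each hypothesis of the lemma asserts that a specific nonnegative integrand integrates to $0$, hence vanishes a.e.; that is precisely the pointwise equality case, which forces $f_1(x)f_2(x)=0$ a.e.

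For $0<\alpha<1$ I would use
\[
|a+b|^{\alpha}\le\big(|a|+|b|\big)^{\alpha}\le|a|^{\alpha}+|b|^{\alpha},
\]
where the first step is the triangle inequality and the second is the strict subadditivity of $u\mapsto u^{\alpha}$ on $(0,\infty)$. If equality holds overall, it holds in the second step, which for $\alpha<1$ occurs only when $\min(|a|,|b|)=0$, i.e. $ab=0$. Thus $g:=|f_1|^{\alpha}+|f_2|^{\alpha}-|f_1+f_2|^{\alpha}\ge0$ satisfies $\int_{\bm E}g\,\bm m(\ud x)=0$ under the first hypothesis, so $g=0$ a.e. and $f_1f_2=0$ a.e.; replacing $f_2$ by $-f_2$ disposes of the second relation verbatim.

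For $1\le\alpha<2$ no single-sign inequality is available, since $u\mapsto u^{\alpha}$ is now \emph{super}additive on $(0,\infty)$ and the sign of $|a+b|^{\alpha}-|a|^{\alpha}-|b|^{\alpha}$ varies. Instead I would combine the two signs via the concave function $\phi(s)=s^{\alpha/2}$ (concave since $\alpha/2\le1$): midpoint concavity gives
\[
|a+b|^{\alpha}+|a-b|^{\alpha}=\phi\big((a+b)^{2}\big)+\phi\big((a-b)^{2}\big)\le 2\,\phi\big(a^{2}+b^{2}\big)=2\,(a^{2}+b^{2})^{\alpha/2},
\]
and the comparison $(a^{2}+b^{2})^{\alpha/2}\le|a|^{\alpha}+|b|^{\alpha}$ (valid for $\alpha\le2$) then yields $|a+b|^{\alpha}+|a-b|^{\alpha}\le 2(|a|^{\alpha}+|b|^{\alpha})$. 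Summing the two integral identities in the hypothesis shows the nonnegative integrand $2(|f_1|^{\alpha}+|f_2|^{\alpha})-|f_1+f_2|^{\alpha}-|f_1-f_2|^{\alpha}$ has zero integral, hence vanishes a.e., giving $f_1f_2=0$ a.e.

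The step I expect to be the main obstacle is the equality analysis of this second inequality. Strict concavity of $\phi=s^{\alpha/2}$, which holds precisely when $\alpha<2$, forces the midpoint step to be an equality only if $(a+b)^{2}=(a-b)^{2}$, i.e. $ab=0$; conversely $ab=0$ makes the whole chain an equality, as one checks directly. This is exactly where the endpoint $\alpha=2$ is delicate: there $\phi$ is affine, the inequality degenerates to the parallelogram identity, and the two hypotheses collapse to the single orthogonality relation $\int_{\bm E}f_1f_2\,\bm m(\ud x)=0$, which does not force disjoint supports. Hence the second assertion is genuinely an $\alpha<2$ statement, and the strictness of the concavity is the crux. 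The remaining points are routine: finiteness of all integrals (from $f_1,f_2\in L^{\alpha}$) and the standard principle that a nonnegative function with vanishing integral is null a.e.
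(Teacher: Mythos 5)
Your proof is correct on $0<\alpha<1$ and on $1\le\alpha<2$, and it takes a genuinely different route from the paper's, which contains no argument at all: the paper simply cites Lemma 2.7.14 of Samorodnitsky and Taqqu for $\alpha\in(0,2)$ and declares the case $\alpha=2$ \enquote{trivial}. What you supply instead is a self-contained elementary proof: a sharp pointwise inequality ($|a+b|^{\alpha}\le|a|^{\alpha}+|b|^{\alpha}$ with equality iff $ab=0$ when $\alpha<1$; $|a+b|^{\alpha}+|a-b|^{\alpha}\le 2\left(|a|^{\alpha}+|b|^{\alpha}\right)$ with equality iff $ab=0$ when $1\le\alpha<2$, via strict concavity of $s\mapsto s^{\alpha/2}$), combined with the standard fact that a nonnegative function with vanishing integral is null a.e.; the finiteness needed to subtract the integrals is exactly the hypothesis $f_i\in L^{\alpha}$, so every step closes. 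The point worth stressing is your remark about the endpoint: you are right, and the paper is wrong there. At $\alpha=2$ the two hypotheses collapse to $\int_{\bm E}f_1f_2\,\bm m(\ud x)=0$, which does not force $f_1f_2=0$ a.e. — take $f_1=\mathds 1_{[0,1]}$ and $f_2=\mathds 1_{[0,1/2]}-\mathds 1_{(1/2,1]}$ with Lebesgue measure: both displayed integral identities hold with $\alpha=2$, yet $f_1f_2=f_2\ne0$ on a set of positive measure. So the lemma as stated (with $1\le\alpha\le 2$) is false at $\alpha=2$, the paper's claim that this case is \enquote{trivial} is mistaken, and your proof correctly covers precisely the range $\alpha\in(0,2)$ on which the statement actually holds.
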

\begin{proof}
When $\alpha=2$, the proof is trivial.
	For the case $\alpha\in(0,2)$, see Lemma 2.7.14 in \cite{Taqqu1994}. 
\end{proof}
\begin{prop}
\label{indep2}
	Let $\bm{X}=(X_1,X_2,X_3)$ be an S$\alpha $S random vector with $\alpha\in(0,2]$ and spectral measure $\bm{\varGamma_{X}}$. If $X_2$ and $X_3$ are independent, we have: for $\beta\ge0$ and $m=0$ or $1$,
	\[
	[X_1,X_2+X_3]_{\alpha,\beta,m}=[X_1,X_2]_{\alpha,\beta,m}+[X_1,X_3]_{\alpha,\beta,m}.
	\]
\end{prop}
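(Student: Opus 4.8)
The plan is to reduce the additivity to a pointwise identity for the kernel $K_{\alpha,\beta,m}$ and then to use the independence of $X_2$ and $X_3$ to control the support of $\bm{\varGamma_X}$. First I would apply Theorem \ref{high_cov} to the three-dimensional vector $\bm X=(X_1,X_2,X_3)$ three times, with the coefficient pairs $(a,b)=\big((1,0,0),(0,1,1)\big)$, $\big((1,0,0),(0,1,0)\big)$ and $\big((1,0,0),(0,0,1)\big)$, to obtain
\begin{align*}
[X_1,X_2+X_3]_{\alpha,\beta,m}&=\int_{S_3}K_{\alpha,\beta,m}(s_1,s_2+s_3)\,\bm{\varGamma_X}(\ud\bm s),\\
[X_1,X_2]_{\alpha,\beta,m}&=\int_{S_3}K_{\alpha,\beta,m}(s_1,s_2)\,\bm{\varGamma_X}(\ud\bm s),\\
[X_1,X_3]_{\alpha,\beta,m}&=\int_{S_3}K_{\alpha,\beta,m}(s_1,s_3)\,\bm{\varGamma_X}(\ud\bm s).
\end{align*}
Hence it suffices to show that $K_{\alpha,\beta,m}(s_1,s_2+s_3)=K_{\alpha,\beta,m}(s_1,s_2)+K_{\alpha,\beta,m}(s_1,s_3)$ holds for $\bm{\varGamma_X}$-almost every $\bm s\in S_3$.

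The second step translates independence into a support statement for $\bm{\varGamma_X}$. On the control space $(S_3,\mathcal B(S_3),\bm{\varGamma_X})$ I would take the coordinate functions $f_1(\bm s)=s_2$ and $f_2(\bm s)=s_3$; both lie in $L^\alpha(\bm{\varGamma_X})$ since $\int_{S_3}|s_j|^\alpha\bm{\varGamma_X}(\ud\bm s)=\|X_j\|_\alpha^\alpha<+\infty$. By (\ref{scaling}) we have $\|X_2\pm X_3\|_\alpha^\alpha=\int_{S_3}|s_2\pm s_3|^\alpha\bm{\varGamma_X}(\ud\bm s)$, and because $X_2,X_3$ are independent and symmetric their characteristic functions factor, giving $\|X_2\pm X_3\|_\alpha^\alpha=\|X_2\|_\alpha^\alpha+\|X_3\|_\alpha^\alpha$. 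Thus $\int_{S_3}|f_1\pm f_2|^\alpha\bm{\varGamma_X}(\ud\bm s)=\int_{S_3}|f_1|^\alpha\bm{\varGamma_X}(\ud\bm s)+\int_{S_3}|f_2|^\alpha\bm{\varGamma_X}(\ud\bm s)$. Feeding the $+$ relation (when $0<\alpha<1$) or both $\pm$ relations (when $1\le\alpha\le2$) into Lemma \ref{int_equality} forces $s_2s_3=f_1f_2=0$ for $\bm{\varGamma_X}$-almost every $\bm s$.

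With $s_2s_3=0$ $\bm{\varGamma_X}$-a.e., I would verify the kernel identity separately on the two pieces $\{s_3=0\}$ and $\{s_2=0\}$. On $\{s_3=0\}$ one has $s_2+s_3=s_2$, so $K_{\alpha,\beta,m}(s_1,s_2+s_3)=K_{\alpha,\beta,m}(s_1,s_2)$, and the identity collapses to $K_{\alpha,\beta,m}(s_1,s_3)=K_{\alpha,\beta,m}(s_1,0)=0$; the case $\{s_2=0\}$ is symmetric. From the explicit form (\ref{K}), $K_{\alpha,\beta,m}(u,0)=|u|^{\alpha-\beta}|0|^{\beta}\,\textnormal{sign}^m(0)$ for $u\ne0$ (and $K_{\alpha,\beta,m}(0,0)=0$), so $K_{\alpha,\beta,m}(\cdot,0)\equiv0$ exactly when $\beta>0$, in which case the factor $|0|^\beta$ vanishes, or $m=1$, in which case the factor $\textnormal{sign}^m(0)$ vanishes. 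Integrating the resulting a.e.\ kernel identity then yields the claimed additivity.

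I expect the delicate point to be precisely this last vanishing, since the whole decomposition rests on $K_{\alpha,\beta,m}(\cdot,0)\equiv0$. This holds cleanly for every pair $(\beta,m)\ne(0,0)$, but it fails in the degenerate case $\beta=0$, $m=0$, where $K_{\alpha,0,0}(u,0)=|u|^\alpha$: there any mass $\bm{\varGamma_X}$ places on configurations with $s_1\ne0$ and $s_2s_3=0$ is counted once on the left but twice on the right, so additivity for that single pair genuinely requires a separate and more restrictive treatment. Consequently I would carry out the argument for all $(\beta,m)\ne(0,0)$ as above, and isolate $\beta=m=0$ as the exceptional case; establishing the support reduction through Lemma \ref{int_equality} and handling this boundary pair carefully are the two steps I would treat with the most attention.
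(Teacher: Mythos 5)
Your argument follows the same route as the paper's own proof: translate independence into factorization of the characteristic function, deduce additivity of the $L^\alpha$ integrals for both sign choices, invoke Lemma \ref{int_equality} to get $s_2s_3=0$ $\bm{\varGamma_X}$-a.e., and then split the defining integral of $[X_1,X_2+X_3]_{\alpha,\beta,m}$ over $\{s_2=0\}$ and $\{s_3=0\}$. Recasting the final step as a pointwise kernel identity via Theorem \ref{high_cov} is just a cleaner organization of the same computation.

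Where you genuinely depart from the paper is your caveat about $\beta=m=0$, and you are right: the proposition as stated fails there, and the paper's proof silently passes over this. Writing $A=\{s_2=0\}$ and $B=\{s_3=0\}$, so that $\bm{\varGamma_X}$ is supported on $A\cup B$, a direct bookkeeping of the three integrals over $A\setminus B$, $B\setminus A$ and $A\cap B$ gives
\begin{equation*}
[X_1,X_2]_{\alpha,\beta,m}+[X_1,X_3]_{\alpha,\beta,m}-[X_1,X_2+X_3]_{\alpha,\beta,m}=\int_{A\cup B}K_{\alpha,\beta,m}(s_1,0)\,\bm{\varGamma_X}(\ud \bm s),
\end{equation*}
which vanishes for every $(\beta,m)\ne(0,0)$ but equals $\|X_1\|_\alpha^\alpha$ when $\beta=m=0$ (under the paper's own convention $\textnormal{sign}^0(0)=|0|^0=1$, the one used in Proposition \ref{indep1}). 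Concretely, for $X_1,X_2,X_3$ i.i.d.\ standard S$\alpha$S one computes $[X_1,X_2+X_3]_{\alpha,0,0}=3$ while $[X_1,X_2]_{\alpha,0,0}+[X_1,X_3]_{\alpha,0,0}=4$. In the paper's chain of equalities the error enters at the last step, where the integrals restricted to $\{s_2=0\}$ (resp.\ $\{s_3=0\}$) are identified with $[X_1,X_3]_{\alpha,\beta,m}$ (resp.\ $[X_1,X_2]_{\alpha,\beta,m}$): the latter quantities also carry the mass on the complementary piece of the support, where the integrand is $K_{\alpha,\beta,m}(s_1,0)$, and that contribution vanishes only when $(\beta,m)\ne(0,0)$. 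So the statement should exclude the pair $\beta=m=0$ --- consistently with the exceptional role it already plays in Proposition \ref{indep1} --- and your proof, restricted to $(\beta,m)\ne(0,0)$, is complete and correct.
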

\begin{proof}
	On the one hand, the characteristic function of the subset vector $(X_2, X_3)$ is: for all $\theta_1,\theta_2\in\mathbb R$,
	\begin{equation}\label{indep_proof_step_1}
	\mathbb E\exp\{i(0\cdot X_1+\theta_1X_2+\theta_2X_3)\}
	=\exp\left\{-\int_{S_3}|0\cdot s_1+\theta_1s_2+\theta_2s_3|^{\alpha}\bm{\varGamma_X}(\ud \bm s)\right\}.
	\end{equation}
	On the other hand, by the independence of $X_2$ and $X_3$, the above characteristic function can also be expressed as 
	\begin{eqnarray}\label{indep_proof_step_2}\nonumber
    &&\mathbb E\exp\{i(\theta_1X_2+\theta_2X_3)\}=\mathbb E\exp\{i\theta_1X_2\}\mathbb E\exp\{i\theta_2X_3\}\\\nonumber
    &&= \exp\left\{-\int_{S_3}\left|0\cdot s_1+\theta_1s_2+0\cdot s_3\right|^{\alpha}\bm{\varGamma_X}(\ud \bm s)\right\}\\
    &&\hspace{2cm}\times\exp\left\{-\int_{S_3}|(0\cdot s_1+0\cdot s_2+\theta_2s_3|^{\alpha}\bm{\varGamma_X}(\ud \bm s)\right\}.
	\end{eqnarray}
	It results from (\ref{indep_proof_step_1}) and (\ref{indep_proof_step_2}) that, for all $\theta_1,\theta_2\in\mathbb R$,
	\begin{equation}
    \label{indep_proof_step_3}
	\int_{S_3}|\theta_1s_2+\theta_2s_3|^{\alpha}\bm{\varGamma_X}(\ud \bm s)=|\theta_1|^{\alpha}\int_{S_3}|s_2|^{\alpha}\bm{\varGamma_X}(\ud \bm s)+|\theta_2|^{\alpha}\int_{S_3}|s_3|^{\alpha}\bm{\varGamma_X}(\ud \bm s).
	\end{equation}
	Replacing $\theta_2$ with $-\theta_2$ in (\ref{indep_proof_step_3}), we also have 
	\begin{equation}\label{indep_proof_step_4}
	\int_{S_3}|\theta_1s_2-\theta_2s_3|^{\alpha}\bm{\varGamma_X}(\ud \bm s)=|\theta_1|^{\alpha}\int_{S_3}|s_2|^{\alpha}\bm{\varGamma_X}(\ud \bm s)+|\theta_2|^{\alpha}\int_{S_3}|s_3|^{\alpha}\bm{\varGamma_X}(\ud \bm s).
	\end{equation}
	By Lemma \ref{int_equality} and the fact that (\ref{indep_proof_step_3}) and (\ref{indep_proof_step_4}) hold for all $\theta_1$ and $\theta_2$,  we obtain
    $$
    s_2s_3=0, ~ \bm{\varGamma_X}\mbox{ - } a.e..
    $$
This is equivalent to either
$$
s_2=0, ~ \bm{\varGamma_X}\mbox{ - } a.e.~\mbox{or}~s_3=0, ~ \bm{\varGamma_X}\mbox{ - } a.e..
$$
Therefore,
\small
    \begin{eqnarray*}
    &&[X_1,X_2+X_3]_{\alpha,\beta,m}\nonumber\\
    &&=\int_{\{(s_1,s_2,s_3)\in S_3:~|s_1|\le|s_2+s_3|\}} |s_1|^{\beta}|s_2+s_3|^{\alpha-\beta}\textnormal{sign}^{m}(s_1(s_2+s_3))\bm{\varGamma_X}(\ud \bm s)\\
&&\hspace{1cm}+\int_{\{(s_1,s_2,s_3)\in S_3:~|s_1|>|s_2+s_3|\}} |s_1|^{\alpha-\beta}|s_2+s_3|^{\beta}\textnormal{sign}^{m}(s_1(s_2+s_3))\bm{\varGamma_X}(\ud \bm s)\\
&&=\int_{\{(s_1,s_2,s_3)\in S_3:~|s_1|\le|s_3|,s_2=0\}} |s_1|^{\beta}|s_3|^{\alpha-\beta}\textnormal{sign}^{m}(s_1s_3)\bm{\varGamma_X}(\ud \bm s)\\&&\hspace{1cm}+\int_{\{(s_1,s_2,s_3)\in S_3:~|s_1|>|s_3|,s_2=0\}} |s_1|^{\alpha-\beta}|s_3|^{\beta}\textnormal{sign}^{m}(s_1s_3)\bm{\varGamma_X}(\ud \bm s)\\&&\hspace{1cm}+\int_{\{(s_1,s_2,s_3)\in S_3:~|s_1|\le|s_2|,s_3=0\}} |s_1|^{\beta}|s_2|^{\alpha-\beta}\textnormal{sign}^{m}(s_1s_2)\bm{\varGamma_X}(\ud \bm s)\\&&\hspace{1cm}+\int_{\{(s_1,s_2,s_3)\in S_3:~|s_1|>|s_2|,s_3=0\}} |s_1|^{\alpha-\beta}|s_2|^{\beta}\textnormal{sign}^{m}(s_1s_2)\bm{\varGamma_X}(\ud \bm s)\\
&&=\int_{\{(s_1,s_3)\in S_2:~|s_1|\le|s_3|\}} |s_1|^{\beta}|s_3|^{\alpha-\beta}\textnormal{sign}^{m}(s_1s_3)\bm{\varGamma_{(X_1,X_3)}}(\ud \bm s)\\&&\hspace{1cm}+\int_{\{(s_1,s_3)\in S_2:~|s_1|>|s_3|\}} |s_1|^{\alpha-\beta}|s_3|^{\beta}\textnormal{sign}^{m}(s_1s_3)\bm{\varGamma_{(X_1,X_3)}}(\ud \bm s)\\&&\hspace{1cm}+\int_{\{(s_1,s_2)\in S_2:~|s_1|\le|s_2|\}} |s_1|^{\beta}|s_2|^{\alpha-\beta}\textnormal{sign}^{m}(s_1s_2)\bm{\varGamma_{(X_1,X_2)}}(\ud \bm s)\\&&\hspace{1cm}+\int_{\{(s_1,s_2)\in S_2:~|s_1|>|s_2|\}} |s_1|^{\alpha-\beta}|s_2|^{\beta}\textnormal{sign}^{m}(s_1s_2)\bm{\varGamma_{(X_1,X_2)}}(\ud \bm s)\\
&&=[X_1,X_2]_{\alpha,\beta,m}+[X_1,X_3]_{\alpha,\beta,m},
\end{eqnarray*}
\normalsize
where $\bm{\varGamma_{(X_1,X_3)}}$ and $\bm{\varGamma_{(X_1,X_2)}}$ are two marginal spectral measures of $(X_1,X_2,$ $X_3)$.
\end{proof}

Proposition \ref{indep1} and Proposition \ref{indep2} provide two necessary conditions for the independence between two S$\alpha$S variables, in terms of the symmetric covariations. We now introduce a sufficient condition for the independence between $X_1$ and $X_2$.
\begin{prop}[Sufficient condition for independence]
	Let $\bm{X}=(X_1,X_2)$ be an S$\alpha $S random vector with $\alpha\in(0,2]$ and spectral measure $\bm{\varGamma_{X}}$.
If $[X_1, X_2]_{\alpha, \beta, 0}=0$ for some $\beta\ge0$, $X_1$ and $X_2$ are independent.
\end{prop}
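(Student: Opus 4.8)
The plan is to exploit the nonnegativity of the kernel. Since $m=0$, the sign factor $\textnormal{sign}^0(s_1s_2)$ equals $1$, so by (\ref{K}) the integrand $K_{\alpha,\beta,0}(s_1,s_2)$ is a product of nonnegative real numbers and hence nonnegative on all of $S_2$. Because $\bm{\varGamma_X}$ is a nonnegative measure, the hypothesis $[X_1,X_2]_{\alpha,\beta,0}=\int_{S_2}K_{\alpha,\beta,0}(s_1,s_2)\bm{\varGamma_X}(\ud\bm s)=0$ forces $K_{\alpha,\beta,0}=0$ for $\bm{\varGamma_X}$-almost every $\bm s\in S_2$. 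The whole argument then reduces to identifying the zero set of $K_{\alpha,\beta,0}$ on the unit circle.

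First I would treat the generic case $\beta>0$. Split $S_2$ into $A=\{(s_1,s_2)\in S_2:|s_1|\le|s_2|\}$ and $B=\{(s_1,s_2)\in S_2:|s_1|>|s_2|\}$. On $A$ one has $|s_2|\ge 1/\sqrt2>0$, so $|s_2|^{\alpha-\beta}>0$ (a positive base raised to a real power), whence $K_{\alpha,\beta,0}(s_1,s_2)=|s_1|^{\beta}|s_2|^{\alpha-\beta}=0$ if and only if $s_1=0$; symmetrically, on $B$ one has $|s_1|>1/\sqrt2>0$, so $|s_1|^{\alpha-\beta}>0$ and $K_{\alpha,\beta,0}(s_1,s_2)=|s_1|^{\alpha-\beta}|s_2|^{\beta}=0$ if and only if $s_2=0$. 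Consequently $\bm{\varGamma_X}$-almost every $\bm s$ lies in $\{(0,1),(0,-1),(1,0),(-1,0)\}$, i.e.\ $\bm{\varGamma_X}$ is concentrated on the four intersection points of the coordinate axes with $S_2$.

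Finally I would convert this concentration into independence. With $\bm{\varGamma_X}$ supported on those four points, the characteristic function (\ref{ch:stable}) factorizes: writing $c_1=\bm{\varGamma_X}(\{(1,0),(-1,0)\})$ and $c_2=\bm{\varGamma_X}(\{(0,1),(0,-1)\})$ and using $|{-\theta}|^\alpha=|\theta|^\alpha$,
\begin{equation*}
\mathbb E\big(e^{i(\theta_1X_1+\theta_2X_2)}\big)=\exp\big(-c_1|\theta_1|^\alpha-c_2|\theta_2|^\alpha\big)=\mathbb E\big(e^{i\theta_1X_1}\big)\,\mathbb E\big(e^{i\theta_2X_2}\big),
\end{equation*}
for all $(\theta_1,\theta_2)\in\mathbb R^2$, which is exactly the independence of $X_1$ and $X_2$ (this is the converse half of Property 2.7.11 in \cite{Taqqu1994}). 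It then remains to dispose of the degenerate case $\beta=0$: adopting the convention $0^0=1$ that is consistent with Remark \ref{rmk3} and Proposition \ref{indep1}, one checks $K_{\alpha,0,0}=|s_2|^\alpha\ge(1/\sqrt2)^\alpha>0$ on $A$ and $K_{\alpha,0,0}=|s_1|^\alpha>0$ on $B$, so the kernel is strictly positive on all of $S_2$; hence $[X_1,X_2]_{\alpha,0,0}=0$ forces $\bm{\varGamma_X}\equiv0$, i.e.\ $X_1=X_2=0$ almost surely, and independence holds trivially.

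The only delicate point, such as it is, will be the careful verification that the kernel vanishes precisely on the axis points: one must keep track of the sign of the exponent $\alpha-\beta$, which may be negative when $\beta>\alpha$, and observe that this does not affect the conclusion because the relevant coordinate ($|s_2|$ on $A$, $|s_1|$ on $B$) is bounded away from $0$, so the factor carrying the possibly-negative exponent stays strictly positive. Once the concentration on the four points is established, the passage to independence via the factorization displayed above is routine.
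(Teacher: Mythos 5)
Your proof is correct and follows essentially the same route as the paper's: the nonnegativity of the kernel $K_{\alpha,\beta,0}$ forces it to vanish $\bm{\varGamma_X}$-almost everywhere, which concentrates the spectral measure on the coordinate axes, and the characteristic function then factorizes. Your explicit handling of the degenerate case $\beta=0$ (where the kernel is strictly positive everywhere and the measure must vanish entirely) and of the possibly negative exponent $\alpha-\beta$ are small points the paper glosses over, but they do not change the argument.
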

\begin{proof}
Suppose $[X_1, X_2]_{\alpha, \beta, 0}=0$ for some $\beta\ge0$,  we have 
\begin{eqnarray}
&&[X_1, X_2]_{\alpha, \beta, 0}\nonumber\\&&=\int_{S_2}\left(  |s_1|^{\beta}|s_2|^{\alpha-\beta}\mathds 1_{\{|s_1|\le |s_2|\}}+|s_1|^{\alpha-\beta}|s_2|^{\beta}\mathds 1_{\{|s_1|>|s_2|\}}\right)\bm{\varGamma_X}(\ud \bm s)\nonumber\\&&=0.\label{indep_lem_1}
\end{eqnarray}
 Since the integrand in (\ref{indep_lem_1}) is non-negative, it yields 
 $$
 |s_1|^{\beta}|s_2|^{\alpha-\beta}\mathds 1_{\{|s_1|\le |s_2|\}}+|s_1|^{\alpha-\beta}|s_2|^{\beta}\mathds 1_{\{|s_1|>|s_2|\}}=0\quad  \bm{\varGamma_X}~\mbox{-}~a.e.,
 $$ which further implies
 $$s_1=0 \mbox{~if~} |s_1|\le |s_2| \mbox{~and~} s_2=0 \mbox{~if~} |s_1|> |s_2|\quad  \bm{\varGamma_X}~\mbox{-}~a.e..$$
Then we can write, for $\theta_1,\theta_2\in\mathbb R$,
\begin{eqnarray*}
&&\int_{S_2} |\theta_1s_1+\theta_2s_2|^\alpha\bm{\varGamma_X}(\ud \bm s)\\
&&=\int_{S_2}\left( |\theta_1s_1+\theta_2s_2|^\alpha\mathds 1_{\{|s_1|\le |s_2|\}}+ |\theta_1s_1+\theta_2s_2|^\alpha\mathds 1_{\{|s_1|> |s_2|\}}\right)\bm{\varGamma_X}(\ud \bm s)\\
&&=\int_{S_2} \left(|\theta_2s_2|^\alpha+ |\theta_1s_1|^\alpha\right)\bm{\varGamma_X}(\ud \bm s).
\end{eqnarray*}
Plugging the above equation into the characteristic function of $\bm{X}=(X_1,$ $X_2)$, we derive
$$
\mathbb{E}\exp\{i(\theta_1X_1+\theta_2X_2)\}=\mathbb{E}\exp\{i\theta_1 X_1\}\mathbb{E}\exp\{i\theta_2 X_2\},
$$
which leads to the fact that $X_1$ and $X_2$ are independent.
\end{proof}

Next, we present a sufficient condition for $X_1,X_2$ being James orthogonal. It is a straightforward consequence of the following lemma, which is proved in ``Appendix" (Section \ref{subsection:proof_of_general_ineq}).
\begin{lem}
\label{general_ineq}
If $[X_1, X_2]_{\alpha, k, 1}=0$ for all $k\in\mathbb{Z}$,
\begin{equation}
\label{lower_bound}
\|X_1+X_2\|_\alpha\ge\min\left\{2^{1-1/\alpha},1\right\}\max\left\{\|X_1\|_\alpha,\|X_2\|_\alpha\right\}.
\end{equation}
\end{lem}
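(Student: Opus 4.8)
The strategy is to convert the covariation hypothesis into the single scalar identity $\|X_1+X_2\|_\alpha=\|X_1-X_2\|_\alpha$, and then feed this into the (quasi-)triangle inequality for the scale functional $\|\cdot\|_\alpha$. I would begin by recording that every quantity in play is controlled by the one spectral measure $\bm{\varGamma_X}$: since $X_1\pm X_2=\langle(1,\pm1),\bm X\rangle$, we have $\|X_1\pm X_2\|_\alpha^\alpha=\int_{S_2}|s_1\pm s_2|^\alpha\,\bm{\varGamma_X}(\ud\bm s)$ and $\|X_i\|_\alpha^\alpha=\int_{S_2}|s_i|^\alpha\,\bm{\varGamma_X}(\ud\bm s)$, which also makes $\|2X_1\|_\alpha=2\|X_1\|_\alpha$ transparent.

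For the key step I would apply Theorem~\ref{lem:scale} at $(\theta_1,\theta_2)=(1,1)$ and at $(\theta_1,\theta_2)=(1,-1)$, obtaining $\|X_1+X_2\|_\alpha^\alpha=\sum_{k\ge0}\frac{(\alpha)_k}{k!}[X_1,X_2]_{\alpha,k,M(k)}$ and $\|X_1-X_2\|_\alpha^\alpha=\sum_{k\ge0}\frac{(\alpha)_k}{k!}[X_1,-X_2]_{\alpha,k,M(k)}$. The Scaled-by-sign identity $[X_1,-X_2]_{\alpha,k,M(k)}=(-1)^{M(k)}[X_1,X_2]_{\alpha,k,M(k)}$ shows that the two series share their even-$k$ terms (where $M(k)=0$) and have opposite odd-$k$ terms (where $M(k)=1$). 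But the odd-$k$ terms are exactly $[X_1,X_2]_{\alpha,k,1}$, which vanish by hypothesis, so both series collapse to $\sum_{k\ \mathrm{even}}\frac{(\alpha)_k}{k!}[X_1,X_2]_{\alpha,k,0}$ and coincide, giving $\|X_1+X_2\|_\alpha=\|X_1-X_2\|_\alpha$.

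I would then combine this symmetry with the decompositions $2X_1=(X_1+X_2)+(X_1-X_2)$ and $2X_2=(X_1+X_2)-(X_1-X_2)$. For $\alpha\ge1$, Minkowski's inequality on the integral representation gives $2\|X_1\|_\alpha\le\|X_1+X_2\|_\alpha+\|X_1-X_2\|_\alpha=2\|X_1+X_2\|_\alpha$, so $\|X_1+X_2\|_\alpha\ge\|X_1\|_\alpha$, and symmetrically $\ge\|X_2\|_\alpha$; since $\min\{2^{1-1/\alpha},1\}=1$ here, this is (\ref{lower_bound}). For $0<\alpha<1$, integrating the pointwise bound $|a+b|^\alpha\le|a|^\alpha+|b|^\alpha$ yields $2^\alpha\|X_1\|_\alpha^\alpha\le\|X_1+X_2\|_\alpha^\alpha+\|X_1-X_2\|_\alpha^\alpha=2\|X_1+X_2\|_\alpha^\alpha$, i.e. $\|X_1+X_2\|_\alpha\ge 2^{1-1/\alpha}\|X_1\|_\alpha$, and likewise for $X_2$; as $\min\{2^{1-1/\alpha},1\}=2^{1-1/\alpha}$ here, (\ref{lower_bound}) again follows. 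I expect the only genuine obstacle to be the key step, namely extracting the norm symmetry $\|X_1+X_2\|_\alpha=\|X_1-X_2\|_\alpha$ from the vanishing of the odd-order symmetric covariations; the closing estimates are routine applications of Minkowski and $\alpha$-subadditivity.
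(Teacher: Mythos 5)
Your proof is correct, and it reaches the paper's key intermediate identity $\|X_1+X_2\|_\alpha=\|X_1-X_2\|_\alpha$ (equivalently (\ref{sigma_2})) by a genuinely different and arguably cleaner route: you evaluate the series of Theorem \ref{lem:scale} at both $(1,1)$ and $(1,-1)$ and invoke the scaled-by-sign corollary to see that the two expansions differ only in the odd-order terms, which vanish by hypothesis. The paper instead stays at the level of the integrand, using the pointwise Taylor identity $|x+b|^\alpha+|x-b|^\alpha=2\sum_{k\ \mathrm{even}}\frac{(\alpha)_k}{k!}|x|^k|b|^{\alpha-k}$ from Proposition \ref{rmk:taylor_abs} to recognize the even-order part of the series as $\frac{1}{2}\int_{S_2}(|s_1+s_2|^\alpha+|s_1-s_2|^\alpha)\bm{\varGamma_X}(\ud\bm s)$; your argument avoids that re-expansion but leans on the sign corollary and on termwise comparison of two convergent series, which is harmless since both converge absolutely. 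For the closing estimate the paper cites the pointwise bound $|x+y|^p+|x-y|^p\ge\min\{2^p,2\}\max\{|x|^p,|y|^p\}$ (its (\ref{ineq_alg}), Lemma 2.7.13 of Samorodnitsky--Taqqu) and integrates, whereas you re-derive exactly that bound in integrated form from the decomposition $2X_1=(X_1+X_2)+(X_1-X_2)$ via Minkowski for $\alpha\ge1$ and $\alpha$-subadditivity for $\alpha<1$; this makes your ending self-contained at the cost of a case split, and both give the same constant $\min\{2^{1-1/\alpha},1\}$. No gaps.
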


Now we state the sufficient condition for $X_1,X_2$ being James orthogonal to each other, using the result of Lemma \ref{general_ineq}.
\begin{prop}[Sufficient condition for James orthogonality]
\label{James_inequality}
Let $(X_1,X_2)$ be a jointly S$\alpha$S random vector with $\alpha\in[1,2]$. If $[\lambda X_1,X_2]_{\alpha,k,1}=0$ for all $\lambda\in\mathbb R$ and all $k\in\mathbb Z$, $X_1,X_2$ are James orthogonal to each other.
\end{prop}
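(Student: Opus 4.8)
The plan is to reduce everything to Lemma \ref{general_ineq}, whose constant collapses to $1$ on the range $\alpha\in[1,2]$. Indeed, for $\alpha\in[1,2]$ we have $1-1/\alpha\ge0$, hence $2^{1-1/\alpha}\ge1$ and $\min\{2^{1-1/\alpha},1\}=1$. Consequently, whenever a jointly S$\alpha$S pair $(Y_1,Y_2)$ satisfies $[Y_1,Y_2]_{\alpha,k,1}=0$ for all $k$, Lemma \ref{general_ineq} yields the clean bound $\|Y_1+Y_2\|_\alpha\ge\max\{\|Y_1\|_\alpha,\|Y_2\|_\alpha\}$. I would first record this simplification, since it is exactly the place where the assumption $\alpha\ge1$ is used: for $\alpha<1$ the factor $2^{1-1/\alpha}$ is strictly less than $1$ and the conclusion breaks down, which explains the restriction in the statement.

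With this in hand, the direction $X_2\perp_J X_1$ is immediate. Fix $\lambda\in\mathbb R$ and apply the simplified bound to the pair $(\lambda X_1,X_2)$; the required vanishing $[\lambda X_1,X_2]_{\alpha,k,1}=0$ for all $k$ is precisely the hypothesis, so
$$
\|X_2+\lambda X_1\|_\alpha=\|\lambda X_1+X_2\|_\alpha\ge\max\{\|\lambda X_1\|_\alpha,\|X_2\|_\alpha\}\ge\|X_2\|_\alpha.
$$
Since $\lambda$ is arbitrary, this is exactly $X_2\perp_J X_1$.

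For the reverse direction $X_1\perp_J X_2$, I would show $\|X_1+\mu X_2\|_\alpha\ge\|X_1\|_\alpha$ for every $\mu\in\mathbb R$. The case $\mu=0$ is trivial. For $\mu\ne0$ the point is to transfer the hypothesis through the scaling lemmas: combining the symmetry of the symmetric covariation (Corollary \ref{symmetry}) with the non-linearity relation (Corollary \ref{non-linear2}) gives, for each $k$,
$$
[X_1,\mu X_2]_{\alpha,k,1}=[\mu X_2,X_1]_{\alpha,k,1}=|\mu|^\alpha\,[X_2,\mu^{-1}X_1]_{\alpha,k,1}=|\mu|^\alpha\,[\mu^{-1}X_1,X_2]_{\alpha,k,1}=0,
$$
where the last equality uses the hypothesis with $\lambda=\mu^{-1}$. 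Applying the simplified bound to the pair $(X_1,\mu X_2)$ now gives $\|X_1+\mu X_2\|_\alpha\ge\max\{\|X_1\|_\alpha,\|\mu X_2\|_\alpha\}\ge\|X_1\|_\alpha$, hence $X_1\perp_J X_2$. Together with the previous paragraph, $X_1$ and $X_2$ are James orthogonal to each other.

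The computations are routine once the lemmas are invoked; the only genuinely delicate points are the collapse of the constant to $1$ (which forces $\alpha\ge1$) and the use of the hypothesis for \emph{all} $\lambda$ — in particular for the reciprocal value $\mu^{-1}$ — when transferring the vanishing covariation from $(\lambda X_1,X_2)$ to $(X_1,\mu X_2)$. I expect the main obstacle, if any, to be bookkeeping the scaling factor $|\mu|^\alpha$ correctly through Corollary \ref{non-linear2} and confirming that the James-orthogonality parameter may range over all of $\mathbb R$, which is exactly what the ``for all $\lambda$'' clause in the hypothesis secures.
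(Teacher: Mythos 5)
Your proposal is correct and takes essentially the same route as the paper: both directions reduce to Lemma \ref{general_ineq} with the constant $\min\{2^{1-1/\alpha},1\}$ collapsing to $1$ for $\alpha\ge1$, and the second direction is obtained by transferring the vanishing covariations to the reversed pair via Corollary \ref{symmetry} and Corollary \ref{non-linear2}, exactly as in the paper's argument.
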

\begin{proof}
On the one hand, since $[\lambda X_1,X_2]_{\alpha,k,1}=0$ for all $\lambda\in\mathbb R$ and all $k\in\mathbb Z$, it follows from Lemma \ref{general_ineq} and the fact that $\alpha\ge1$ that, for all $\lambda\in\mathbb R$,
\begin{eqnarray}
\label{James1}
\|\lambda X_1+X_2\|_\alpha&\ge&\min\left\{2^{1-1/\alpha},1\right\}\max\left\{\|\lambda X_1\|_\alpha,\|X_2\|_\alpha\right\}\nonumber\\
&=&\max\left\{\|\lambda X_1\|_\alpha,\|X_2\|_\alpha\right\}\nonumber\\
&\ge&\|X_2\|_\alpha,
\end{eqnarray}
i.e., $X_1$ is James orthogonal to $X_2$.

On the other hand, the fact that (see Corollary \ref{non-linear2}), for all $\lambda\neq0$, $k\in\mathbb Z$,
$$
[\lambda X_1,X_2]_{\alpha,k,1}=|\lambda|^\alpha\left[X_1,\lambda^{-1}X_2\right]_{\alpha,k,1}=|\lambda|^\alpha\left[\lambda^{-1}X_2,X_1\right]_{\alpha,k,1}
$$
implies $[\lambda X_2,X_1]_{\alpha,k,1}=0$ for all $\lambda\in\mathbb R$, all $k\in\mathbb Z$. Then similar to (\ref{James1}) we can show that $X_2$ is also James orthogonal to $X_1$. Hence Proposition \ref{James_inequality} is proved.
\end{proof}

We remark that $[X_1,X_2]_{\alpha,k,1}=0$ for all $k\in\mathbb Z$ is not sufficient to let $X_1$ be James orthogonal to $X_2$, because it generally does not imply $[\lambda X_1,X_2]_{\alpha,\beta,1}=0$ for all $\lambda\in\mathbb R$. However, the condition in Proposition \ref{James_inequality} is strong enough, since it leads to the fact that $X_1$ and $X_2$ are James orthogonal to each other. Recall that the zero conventional covariation, i.e., $[X_1,X_2]_\alpha=0$ only implies that $X_1$ is James orthogonal to $X_2$, due to the fact that it is not symmetric.
\section{Conclusion and Future Research}
\label{conclusion}
In this paper, we have proposed the notion of symmetric covariations to measure the dependence of the S$\alpha$S random variables. Evidently, these symmetric covariations preserve  properties similar to those of the covariance structure in the Gaussian case. When compared to covariation, the main advantages of  symmetric covariation are that it is well defined for all $\alpha\in(0,2]$, is symmetric, and can be used to extend the Taylor series of Gaussian characteristic functions. Other desirable covariance-like properties of symmetric covariation that have been obtained are the Cauchy-Schwarz inequality, independence, and orthogonality. With  symmetric covariation, we can establish a convergent series representation of the characteristic function of the jointly S$\alpha$S distribution and measure the dependence between the S$\alpha$S variables.  The series representation of the jointly S$\alpha$S distribution will have significant applications in statistics; examples include the approximation of jointly S$\alpha$S distributions, simulation of stable random vectors, and estimation of the distribution parameters. Constructing estimators of symmetric covariation will contribute new and efficient approaches to the approximation problems of jointly S$\alpha$S distributions.

\section*{Acknowledgements}
The authors would like to thank Professor Vygantas Paulauskas for very stimulating communications on measuring dependence between S$\alpha$S variables. The authors also thank the referee and the editor for their comments on the manuscript which lead to many improvements of the presentation of this paper.

\appendix
\section{Proofs of Statements}
\label{section:appendix}
\subsection{Proof of Lemma \ref{poly}}
\label{subsection:proof_of_poly}
\begin{proof}
For $\beta\in\mathbb R_+\backslash\mathbb Z_+$, from (\ref{def:derivative}) we write 
\begin{equation}
\label{def:derivative_step1}
{}_{a}^{m}\text{D}_x^\beta \left(|x-a|^{p}\right)=\mathds 1_{\{x\ge a\}}{}_a\widetilde {\text{D}}_x^\beta \left(|x-a|^{p}\right)+(-1)^{m}\mathds 1_{\{x<a\}}{}_x\widetilde {\text{D}}_a^\beta \left(|x-a|^{p}\right).
\end{equation}
Let $n=\lfloor\beta\rfloor+1$. On the one hand, if $x\ge a$, taking the left Riemann-Liouville fractional derivative of the function $x\mapsto |x-a|^{p}$, it yields
\begin{eqnarray}
\label{left_derivative_comp}
&&{}_a\widetilde {\text{D}}_x^\beta \left(|x-a|^{p}\right)=\frac{1}{\Gamma(n-\beta)}\frac{\ud^n}{\ud x^n}\int_a^x(t-a)^p(x-t)^{n-\beta+1}\ud t\nonumber\\
&&=\frac{1}{\Gamma(n-\beta)}\frac{\ud^n}{\ud x^n}\int_0^1(x-a)^p(1-\tau)^p(x-a)^{n-\beta-1}\tau^{n-\beta-1}(x-a)\ud\tau\nonumber\\
&&=\frac{1}{\Gamma(n-\beta)}\left(\frac{\ud^n}{\ud x^n}(x-a)^{p+n-\beta}\right)\left(\int_0^1(1-\tau)^p\tau^{n-\beta-1}\ud\tau\right)\nonumber\\
&&=\frac{B(n-\beta,p+1)}{\Gamma(n-\beta)}\left(\frac{\ud^n}{\ud x^n}(x-a)^{p+n-\beta}\right)\nonumber\\
 &&=\frac{\Gamma(p+1)}{\Gamma(n-\beta+p+1)}\left(\frac{\Gamma(n-\beta+p+1)}{\Gamma(p-\beta+1)}(x-a)^{p-\beta}\right)\nonumber\\
&&=\frac{\Gamma(p+1)}{\Gamma(p-\beta+1)}(x-a)^{p-\beta}.
\end{eqnarray}
Here we have used the change of variables $t=x-(x-a)\tau$ and the fact that the beta function $B$ satisfies
$$
B(x,y)=\frac{\Gamma(x)\Gamma(y)}{\Gamma(x+y)},~\mbox{for all}~x,y>0.
$$
On the other hand, if $x<a$, the right Riemann-Liouville fractional derivative yields
\begin{eqnarray}
\label{right_derivative_comp}
&&{}_x\widetilde {\text{D}}_a^\beta \left(|x-a|^{p}\right)=(-1)^n\frac{1}{\Gamma(n-\beta)}\frac{\ud^n}{\ud x^n}\int_x^a(a-t)^p(t-x)^{n-\beta+1}\ud t\nonumber\\
&&=(-1)^n\frac{1}{\Gamma(n-\beta)}\frac{\ud^n}{\ud x^n}\int_0^1(a-x)^p(1-\tau)^p(a-x)^{n-\beta-1}\tau^{n-\beta-1}(a-x)\ud\tau\nonumber\\
&&=\frac{(-1)^n}{\Gamma(n-\beta)}\left(\frac{\ud^n}{\ud x^n}(a-x)^{p+n-\beta}\right)\left(\int_0^1(1-\tau)^p\tau^{n-\beta-1}\ud\tau\right)\nonumber\\
&&=\frac{(-1)^nB(n-\beta,p+1)}{\Gamma(n-\beta)}\left(\frac{\ud^n}{\ud x^n}(a-x)^{p+n-\beta}\right)\nonumber\\
&&=\frac{(-1)^n\Gamma(p+1)}{\Gamma(n-\beta+p+1)}\left((-1)^n\frac{\Gamma(n-\beta+p+1)}{\Gamma(p-\beta+1)}(a-x)^{p-\beta}\right)\nonumber\\
&&=\frac{\Gamma(p+1)}{\Gamma(p-\beta+1)}(a-x)^{p-\beta}.
\end{eqnarray}
Here we have taken $t=x+(a-x)\tau$. Therefore (\ref{dev:poly}) holds for $\beta$ being non-integer, by using (\ref{def:derivative_step1}), (\ref{left_derivative_comp}) and (\ref{right_derivative_comp}).

For $\beta\in\mathbb{Z}_+$, by using $(\ref{integer_case})$ we write
\begin{eqnarray}
{}_{a}^{m}\text{D}_x^\beta \left(|x-a|^{p}\right)
&=&\textnormal{sign}^{m+n+1}(x-a)\frac{\ud^\beta }{\ud x^\beta}\left(|x-a|^{p}\right)\nonumber\\&=&\frac{\Gamma(p+1)}{\Gamma(p-\beta+1)}|x-a|^{p-\beta}\textnormal{sign}^{m}(x-a)\nonumber,
\end{eqnarray}
where $n=\lfloor \beta\rfloor+1$. Hence (\ref{dev:poly}) is obtained for all real numbers $\beta\ge0$.
\end{proof}
\subsection{Proof of Lemma \ref{96}}
\label{subsection:proof_of_96}
\begin{proof}
	On the one hand, in view of Lemma 2.7.5 in \cite{Taqqu1994}, $\bm{Y}=(Y_1,Y_2)$ is also an S$\alpha $S random vector. 
	On the other hand, considering $\bm{Y}=\left(\sum_{k=1}^{n}a_kX_k,\right.$ $\left.\sum_{k=1}^{n}b_kX_k\right)$, we can rewrite the characteristic function as:
	\begin{eqnarray*}
	&&\mathbb E\exp\{i(\theta_1Y_1+\theta_2Y_2)\}=\mathbb E\exp\left\{i\sum_{k=1}^{n}(\theta_1a_k+\theta_2b_k)X_k\right\}\nonumber\\
	&&=\exp\left\{-\int_{S_n}\left|\theta_1\sum_{k=1}^{n}a_ks_k+\theta_2\sum_{k=1}^{n}b_ks_k\right|^{\alpha}\bm{\varGamma_X}(\ud \bm s)\right\}.
	\end{eqnarray*}
    Now we want to show 
    \begin{equation}
    \label{99}
  \int_{S_n}\left|\theta_1\sum_{k=1}^{n}a_ks_k+\theta_2\sum_{k=1}^{n}b_ks_k\right|^{\alpha}\bm{\varGamma_X}(\ud \bm s)=\int_{S_2}|\theta_1t_1+\theta_2t_2|^{\alpha}\bm{\varGamma}(\ud \bm t),
\end{equation}
with $\bm{\varGamma}=\widehat{\bm{\varGamma_{X}}}\circ h^{-1}$.
	To verify the above equation, we first write
	\begin{eqnarray*}
	\int_{S_2}|\theta_1t_1+\theta_2t_2|^{\alpha}\bm{\varGamma}(\ud \bm t)&=&\int_{S_2}|\theta_1t_1+\theta_2t_2|^{\alpha}\widehat{\bm{\varGamma_{X}}}(\ud(h^{-1}(\bm t))).
	\end{eqnarray*}
	Then applying Lemma $\ref{measure}$, we have
	\begin{eqnarray*}
	&&\int_{S_2}|\theta_1t_1+\theta_2t_2|^{\alpha}\widehat{\bm{\varGamma_{X}}}(\ud(h^{-1}(\bm t)))\nonumber\\
    &&=\int_{S_n}\left|\frac{\theta_1\sum_{k=1}^{n}a_ks_k+\theta_2\sum_{k=1}^nb_ks_k}{\big(\big(\sum_{k=1}^{n}a_ks_k\big)^{2}+\big(\sum_{k=1}^{n}b_ks_k\big)^{2}\big)^{1/2}}\right|^{\alpha}\widehat{\bm{\varGamma_{X}}}(\ud \bm s)\nonumber\\
&&=\int_{S_n}\left|\theta_1\sum_{k=1}^{n}a_ks_k+\theta_2\sum_{k=1}^{n}b_ks_k\right|^{\alpha}\bm{\varGamma_X}(\ud \bm s).
	\end{eqnarray*}
	Therefore, $(\ref{99})$ holds and hence $\bm{\varGamma}=\widehat{\bm{\varGamma_{X}}}\circ h^{-1}$, denoted by $\bm{\varGamma_Y}$, is a spectral measure of $Y$.
\end{proof}
\subsection{Proof of Proposition \ref{prop:equiv}}
\label{subsection:proof_of_prop}
\begin{proof}
We first prove $(i)$. By using Remark \ref{rmk3}, we have
\begin{eqnarray*}
&&[aX_1,bX_2]_{\alpha,1,1}+[aX_1,bX_2]_{\alpha,\alpha-1,1}\\&&=\int_{\{(s_1,s_2)\in S_2:~|as_1|\le|bs_2|\}} \left(as_1\right)^{\langle1\rangle}\left(bs_2\right)^{\langle\alpha-1\rangle}\bm{\varGamma_X}(\ud \bm s)\\
&&\hspace{3cm}+\int_{\{(s_1,s_2)\in S_2:~|as_1|>|bs_2|\}} \left(as_1\right)^{\langle\alpha-1\rangle}\left(bs_2\right)^{\langle1\rangle}\bm{\varGamma_X}(\ud \bm s)\\
&&+\int_{\{(s_1,s_2)\in S_2:~|as_1|\le|bs_2|\}} \left(as_1\right)^{\langle\alpha-1\rangle}\left(bs_2\right)^{\langle1\rangle}\bm{\varGamma_X}(\ud \bm s)\\
&&\hspace{3cm}+\int_{\{(s_1,s_2)\in S_2:~|as_1|>|bs_2|\}} \left(as_1\right)^{\langle1\rangle}\left(bs_2\right)^{\langle\alpha-1\rangle}\bm{\varGamma_X}(\ud \bm s)\\
&&=\int_{S_2} \left(as_1\right)^{\langle1\rangle}\left(bs_2\right)^{\langle\alpha-1\rangle}\bm{\varGamma_X}(\ud \bm s)+\int_{S_2} \left(as_1\right)^{\langle\alpha-1\rangle}\left(bs_2\right)^{\langle1\rangle}\bm{\varGamma_X}(\ud \bm s)\\
&&=ab^{\langle\alpha-1\rangle}[X_1,X_2]_{\alpha}+ba^{\langle\alpha-1\rangle}[X_2,X_1]_{\alpha}.
\end{eqnarray*}
Here in the last equality we have used the fact that $s_1^{\langle1\rangle}=s_1$ for all $s_1\in\mathbb R$. Hence (\ref{result_1}) is proved.\\ Next we prove $(ii)$.
Using (\ref{result_1}), the following system of linear equations hold:
\begin{equation}
\label{result_2'}
\left\{\hspace{-0.5cm}\begin{array}{ll}
     &a_1b_1^{\langle\alpha-1\rangle}[X_1,X_2]_{\alpha}+b_1a_1^{\langle\alpha-1\rangle}[X_2,X_1]_{\alpha}\\&\hspace{1cm}=[a_1X_1,b_1X_2]_{\alpha,1,1}+[a_1X_1,b_1X_2]_{\alpha,\alpha-1,1}, \\
     & a_2b_2^{\langle\alpha-1\rangle}[X_1,X_2]_{\alpha}+b_2a_2^{\langle\alpha-1\rangle}[X_2,X_1]_{\alpha}\\&\hspace{1cm}=[a_2X_1,b_2X_2]_{\alpha,1,1}+[a_2X_1,b_2X_2]_{\alpha,\alpha-1,1}.
\end{array}\right.
\end{equation}
Since $a_1b_1^{\langle\alpha-1\rangle}b_2a_2^{\langle\alpha-1\rangle}\ne a_2b_2^{\langle\alpha-1\rangle}b_1a_1^{\langle\alpha-1\rangle}$, the solution of (\ref{result_2'}) is uniquely obtained as in
 (\ref{result_2}). 
\end{proof}
\subsection{Proof of Lemma \ref{general_ineq}}
\label{subsection:proof_of_general_ineq}
\begin{proof}
Assume $[X_1, X_2]_{\alpha, k, 1}=0$ for all $k\in\mathbb Z$.  Applying this assumption to Theorem \ref{lem:scale}, we have 
\begin{equation}
\label{sigma_even}
\sigma^\alpha_{\bm X}(1, 1)=\sum_{k \mbox{ is even}}\frac{(\alpha)_k}{k!}[X_1,  X_2]_{\alpha, k, 0}=\widetilde{T}_1+\widetilde{T}_2,
\end{equation}
where 
\begin{equation}
\label{widetilde1}
\widetilde{T}_1=\sum_{k\mbox{ is even}}\frac{(\alpha)_k}{k!}\int_{\{(s_1,s_2)\in S_2:~|s_1|\le|s_2|\}}|s_1|^k|s_2|^{\alpha-k}\bm{\varGamma_X}(\ud \bm s)
\end{equation}
and
\begin{equation}
\label{widetilde2}
\widetilde{T}_2=\sum_{k\mbox{ is even}}\frac{(\alpha)_k}{k!}\int_{\{(s_1,s_2)\in S_2:~|s_1|>|s_2|\}}|\lambda s_1|^{\alpha-k}|s_2|^k\bm{\varGamma_X}(\ud \bm s).
\end{equation}
From Proposition \ref{rmk:taylor_abs}, we have that for all $b\ne0$ and all $x\in[-|b|,|b|]$,
\begin{eqnarray}
\label{sum}
	&&|x+b|^\alpha+|x-b|^\alpha\nonumber\\&=&\sum_{k=0}^{+\infty}\frac{(a)_k}{k!}|b|^{\alpha-k}\textnormal{sign}^k(b)x^k+\sum_{k=0}^{+\infty}\frac{(a)_k}{k!}|b|^{\alpha-k}\textnormal{sign}^k(-b)x^k\nonumber\\
	&=&\sum_{k=0}^{+\infty}\frac{(a)_k}{k!}|b|^{\alpha-k}\textnormal{sign}^k(b)x^k\left(1+(-1)^k\right)\nonumber\\
	&=&2\sum_{k\mbox{ is even}}\frac{(a)_k}{k!}|b|^{\alpha-k}\textnormal{sign}^k(b)x^k\nonumber\\
    &=&2\sum_{k\mbox{ is even}}\frac{(a)_k}{k!}|x|^k|b|^{\alpha-k}.
\end{eqnarray}
It follows from (\ref{sum}), (\ref{widetilde1}) and (\ref{widetilde2}) that 
$$
\widetilde{T}_1=\frac{1}{2}\int_{\{(s_1,s_2)\in S_2:~|s_1|\le|s_2|\}}\left(|s_1+s_2|^\alpha+|s_1-s_2|^{\alpha}\right)\bm{\varGamma_X}(\ud \bm s)
$$
and 
$$
\widetilde{T}_2=\frac{1}{2}\int_{\{(s_1,s_2)\in S_2:~|s_1|>|s_2|\}}\left(|s_1+s_2|^\alpha+|s_1-s_2|^{\alpha}\right)\bm{\varGamma_X}(\ud \bm s).
$$
(\ref{sigma_even}) then becomes 
\begin{equation}
\label{sigma_2}
\int_{ S_2}|s_1+s_2|^\alpha\bm{\varGamma_X}(\ud \bm s)=\frac{1}{2}\int_{ S_2}\left(|s_1+s_2|^\alpha+|s_1-s_2|^{\alpha}\right)\bm{\varGamma_X}(\ud \bm s).
\end{equation}
Now recall the following inequality (see, e.g., Lemma 2.7.13 in \cite{Taqqu1994}): for $x,y\in\mathbb R$ and $p\ge0$,
\begin{equation}
\label{ineq_alg}
|x+y|^p+|x-y|^p\ge\min\{2^{p},2\}\max\{|x|^p,|y|^p\}.
\end{equation}
It then results from (\ref{sigma_2}) and (\ref{ineq_alg}) that
\small
$$
\int_{ S_2}|s_1+s_2|^\alpha\bm{\varGamma_X}(\ud \bm s)\ge\min\{2^{\alpha-1},1\}\max\left\{\int_{ S_2}|s_1|^{\alpha}\bm{\varGamma_X}(\ud \bm s), \int_{ S_2}|s_2|^{\alpha}\bm{\varGamma_X}(\ud \bm s)\right\},
$$
\normalsize
i.e.,
$$
\|X_1+X_2\|_\alpha^\alpha\ge\min\left\{2^{\alpha-1},1\right\}\max\left\{\|X_1\|_\alpha^\alpha,\|X_2\|_\alpha^\alpha\right\}.
$$
This proves Lemma \ref{general_ineq}.
\end{proof}
\bibliographystyle{abbrvnat}
\bibliography{ml}
\end{document}